\theoremstyle{plain}
\newtheorem{proposition}{Proposition}[section]
\newtheorem{theorem}[proposition]{Theorem}
\newtheorem{lemma}[proposition]{Lemma}
\newtheorem{corollary}[proposition]{Corollary}
\theoremstyle{definition}
\newtheorem{example}[proposition]{Example}
\newtheorem{definition}[proposition]{Definition}
\theoremstyle{remark}
\newtheorem{remark}[proposition]{Remark}
\newtheorem{question}[proposition]{Question}
\newtheorem*{claim}{Claim}
\DeclareMathOperator{\Aut}{Aut}
\DeclareMathOperator{\Cay}{Cay}
\DeclareMathOperator{\SL}{\mathsf{SL}}
\DeclareMathOperator{\GL}{\mathsf{GL}}
\DeclareMathOperator{\PSL}{\mathsf{PSL}}
\DeclareMathOperator{\Hom}{Hom}
\DeclareMathOperator{\Out}{Out}
\DeclareMathOperator{\Gr}{Gr} 
\DeclareMathOperator{\id}{id} 
\DeclareMathOperator{\Id}{Id}
\DeclareMathOperator{\Mod}{Mod}
\DeclareMathOperator{\Isom}{Isom}
\DeclareMathOperator{\Inn}{Inn}
\DeclareMathOperator{\Sp}{Sp}
\DeclareMathOperator{\SO}{SO}
\DeclareMathOperator{\Stab}{\mathrm{Stab}}
\DeclareMathOperator{\Fc}{\mathcal{F}}
\DeclareMathOperator{\Cb}{\mathbb{C}}
\DeclareMathOperator{\Hb}{\mathbb{H}}
\DeclareMathOperator{\Kb}{\mathbb{K}}
\DeclareMathOperator{\Nb}{\mathbb{N}}
\DeclareMathOperator{\Rb}{\mathbb{R}}
\DeclareMathOperator{\Zb}{\mathbb{Z}}
\newcommand{\equaldef}{\overset{\mathrm{def}}{=}}
\begin{document}

\title{Simple Anosov representations of closed surface groups}
\author{Nicolas Tholozan}
\address{\'Ecole Normale Sup\'erieure PSL, CNRS}
\email{nicolas.tholozan@ens.fr}

\author{Tianqi Wang}
\address{National University of Singapore}
\email{twang@u.nus.edu}
\thanks{Wang was partially supported by the NUS-MOE grant R-146-000-270-133 and by the Merlion PhD program 2021}

\date{\today}

\begin{abstract}
We introduce and study \emph{simple Anosov representations} of closed hyperbolic surface groups, analogous to Minsky's \emph{primitive stable representations} of free groups. We prove that the set of simple Anosov representations into $\SL(d,\Cb)$ with $d \geqslant 4$ strictly contains the set of Anosov representations. As a consequence, we construct domains of discontinuity for the mapping class group action on character varieties which contain non-discrete representations.
\end{abstract}

\maketitle

\setcounter{tocdepth}{1}
\tableofcontents

\section{Introduction}

Given a finitely generated group $\Gamma$ and a complex linear group $G$, the group of outer automorphisms $\Out(\Gamma)$ of $\Gamma$ acts on the \emph{character variety} $\chi(\Gamma,G)$, the GIT quotient of $\Hom(\Gamma,G)$ under the conjugation action of $G$, by precomposition. This action is of primordial interest in various topics, such as the study of locally homogeneous geometric structures on manifolds, or isomonodromic deformations of complex differential equations.

When $\Gamma$ has a large automorphism group (e.g. when $\Gamma$ is a free group or a surface group), the action of $\Out(\Gamma)$ on character varieties can be very chaotic, and a first interesting question is whether one can construct large domains of discontinuity for this action. A broad family of examples have been produced by the theory of \emph{Anosov representations} of hyperbolic groups, see \cite{Lab}. Anosov representations are quasi-isometrically embedded (they are equivalent for $\SL(2,\Cb)$) and stable under small deformations. They thus form open domains of character varieties on which $\Out(\Gamma)$ acts properly discontinuously (see \cite{LabourieMCGAction}).

However, these domains of discontinuity are not necessarily maximal. In \cite{Min}, Minsky constructed examples of so-called \emph{primitive stable representations} of a free group into $\SL(2,\Cb)$, and proved that they form an open domain of discontinuity which contains strictly the set of quasi-isometric embeddings. Minsky's construction has been generalized to higher rank by Kim--Kim \cite{KK} and by the second author in \cite{W23} who developed the notion of \emph{primitive Anosov representation}.

Roughly speaking, primitive Anosov representations of a non-abelian free group are representations with an Anosov behaviour ``in restriction to'' primitive elements. More precisely, the second author proved in \cite{W23} that primitive Anosov representations are representations such that the associated local system over the geodesic flow of the free group admits a dominated splitting \emph{in restriction to} the closure of the union of all closed orbits corresponding to primitive elements. This motivates the more general study of \emph{restricted Anosov representations}, initiated in \cite{W23}, which we carry on in the present paper.

With this point of view, primitive Anosov representations have a natural analog for closed surface groups, which we call \emph{simple Anosov}:

\begin{definition}
    A representation $\rho$ of the fundamental group $\Gamma$ of a closed connected hyperbolic surface $S$ into $\SL(d,\Cb)$ is called \emph{simple $k$-Anosov} if the local system associated to $\rho$ over the geodesic flow of $S$ admits a dominated splitting of rank $k$ in restriction to the closure of the union of simple closed geodesics.
\end{definition}

We will call the closure of simple closed geodesics the \emph{Birman--Series set}, in reference to Birman and Series \cite{BS85}, who proved that this set is ``sparse'' in the unit tangent bundle of the hyperbolic surface (it has Hausdorff dimension $1$). We refer to Section \ref{RestrictedAnosovSection} for precisions about the above definition (in particular, the definition of dominated splitting of rank $k$).

We prove in Section \ref{MCGSection} the following expected property:
\begin{proposition} \label{p-MCGProperlyDiscontinuous}
    Let $\Gamma$ be the fundamental group of a closed connected hyperbolic surface $S$. Then the set of simple Anosov representations $\Gamma$ into $\SL(d,\Cb)$ modulo conjugation is an open domain of discontinuity for the action of $\Out(\Gamma)= \Mod(S)$.
\end{proposition}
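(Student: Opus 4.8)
The plan is to establish two things: that the set of simple Anosov representations is open in the character variety, and that $\Mod(S)$ acts properly discontinuously on it. Both properties should follow from the defining feature of simple Anosov representations—namely, the existence of a dominated splitting over the Birman–Series set—combined with two structural facts: the mapping class group preserves this set (since it permutes simple closed geodesics), and dominated splittings are robust under perturbation.

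\medskip

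\emph{Openness.} First I would recall that a dominated splitting over a compact invariant set for a flow is an open condition: if the local system associated to $\rho$ admits a dominated splitting of rank $k$ in restriction to the Birman–Series set $\mathcal{B}$, then any nearby representation $\rho'$ also does. The key point is that the Birman–Series set is compact (being the closure of a union of closed geodesics inside the compact unit tangent bundle) and flow-invariant, so the standard structural-stability theory for dominated splittings applies: the relevant cocycle varies continuously with $\rho$, and domination—being a uniform exponential-gap estimate along orbits—persists under $C^0$-small perturbations of the cocycle. This openness should already be recorded in Section \ref{RestrictedAnosovSection} as part of the general theory of restricted Anosov representations, so I would cite it rather than reprove it, concluding that simple Anosov representations form an open subset of $\chi(\Gamma, \SL(d,\Cb))$.

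\medskip

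\emph{Mapping-class-group invariance and proper discontinuity.} The crucial structural observation is that an element $\varphi \in \Mod(S)$ acts on $\mathcal{B}$ by a homeomorphism commuting (up to time-reparametrization) with the geodesic flow, because $\varphi$ permutes the simple closed geodesics and hence preserves their closure. Consequently, if $\rho$ is simple Anosov then so is $\rho \circ \varphi_*$: the dominated splitting for $\rho \circ \varphi_*$ over $\mathcal{B}$ is the pullback under $\varphi$ of the dominated splitting for $\rho$. This shows the simple Anosov locus is $\Mod(S)$-invariant. For proper discontinuity I would then follow the strategy used for primitive Anosov representations in \cite{W23} and ultimately going back to \cite{LabourieMCGAction}: associate to each simple Anosov representation a continuous, mapping-class-group-equivariant quantity—such as a uniform lower bound on the domination gap, or equivalently the quasi-isometric control on the restriction to simple closed geodesics—which furnishes, on any compact subset of the simple Anosov locus, a uniform exponential growth estimate for the translation lengths of simple closed curves. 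A standard argument then shows that only finitely many mapping classes can keep a given compact set within a fixed compact neighborhood, yielding proper discontinuity.

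\medskip

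The main obstacle I anticipate is verifying that the dominated splitting genuinely transports under the $\Mod(S)$-action in a way compatible with the flow, i.e. that the homeomorphism of $\mathcal{B}$ induced by a mapping class interacts correctly with the time-parametrization of the geodesic flow (mapping classes act on the surface up to isotopy, so the induced map on the unit tangent bundle is only a flow-equivalence, not a strict conjugacy). Making the domination estimate genuinely uniform over compact families—so that the counting argument for proper discontinuity goes through—will require care, since one must ensure the exponential gap does not degenerate along a sequence approaching the boundary of the simple Anosov locus. Here I expect the compactness of $\mathcal{B}$ and the openness established in the first step to do the heavy lifting, reducing the problem to the by-now-standard dynamical criterion for proper discontinuity on spaces of representations with uniform domination.
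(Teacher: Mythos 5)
Your plan is correct and follows essentially the same route as the paper: openness is the general stability of restricted Anosov representations over a cocompact subflow (Proposition \ref{stabilityofRA}), and proper discontinuity comes from a uniform, conjugation-invariant two-sided comparison on compact subsets of the character variety between stable word lengths and eigenvalue gaps for a finite collection $D$ of simple-closed-curve classes (generators together with suitable pairwise products), extracted from the singular-value estimate of Theorem \ref{prevresult} by passing to eigenvalues. The ``standard argument'' you defer to at the end is, in the paper, precisely the finiteness of $\{[f]\in \Out(\pi_1(S))\ :\ \Vert f(\gamma)\Vert \leqslant A^2\Vert\gamma\Vert\ \text{for all}\ \gamma\in D\}$, imported from Lemma 12 of \cite{Lee11}.
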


For non-oriented surfaces, Lee proved in \cite{Lee11} that the domain of simple Anosov representations into $\SL(2,\Cb)$ contains stricly the domain of Anosov (i.e. quasifuchsian) representations. In contrast, for oriented surfaces, the domain of quasifuchsian representations is known to be a maximal domain of discontinuity by results of Lee \cite{Lee11} and Souto--Storm \cite{SS06}. The main goal of the present paper is to construct new examples of simple Anosov representations in higher rank:

\begin{theorem}
    Let $\Gamma$ be the fundamental group of a closed connected hyperbolic surface. Then, for every $d\geq 2$, there exists a simple $d$-Anosov representation of $\Gamma$ into $\SL(2d,\Cb)$ in the boundary of the domain of $d$-Anosov representations.
\end{theorem}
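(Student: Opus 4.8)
The plan is to realize the desired representation as a tensor product $\rho = j \otimes \phi$, where $j \colon \Gamma \to \SL(2,\Rb) \subset \SL(2,\Cb)$ is the Fuchsian holonomy of $S$ and $\phi \colon \Gamma \to \SL(d,\Cb)$ is an auxiliary representation to be chosen. Since $\det(j(\gamma)\otimes\phi(\gamma)) = \det j(\gamma)^d \det\phi(\gamma)^2 = 1$, this lands in $\SL(2d,\Cb)$. First I would record the singular values of $\rho(\gamma)$: writing $\lambda_\gamma = e^{\ell(\gamma)/2}$ for the larger singular value of $j(\gamma)$ (so $\lambda_\gamma^{-1}$ is the smaller and $\ell$ is the geodesic length), the $2d$ singular values of $\rho(\gamma)$ are $\{\lambda_\gamma\,\sigma_b(\phi(\gamma))\}_b \cup \{\lambda_\gamma^{-1}\sigma_b(\phi(\gamma))\}_b$. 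Hence the candidate rank-$d$ splitting is the flow-invariant algebraic splitting $E^+_\gamma \otimes \Cb^d \oplus E^-_\gamma \otimes \Cb^d$, where $E^\pm_\gamma$ are the attracting and repelling lines of $j(\gamma)$, and the gap between its $d$-th and $(d+1)$-th singular values is exactly
\[
\frac{\sigma_d(\rho(\gamma))}{\sigma_{d+1}(\rho(\gamma))} = \lambda_\gamma^2\,\frac{\sigma_d(\phi(\gamma))}{\sigma_1(\phi(\gamma))} = \frac{e^{\ell(\gamma)}}{\kappa(\gamma)}, \qquad \kappa(\gamma) := \frac{\sigma_1(\phi(\gamma))}{\sigma_d(\phi(\gamma))} \geq 1 .
\]

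By the singular-value characterization of dominated splittings (Section \ref{RestrictedAnosovSection}), $\rho$ admits a rank-$d$ dominated splitting in restriction to a flow-invariant set $K$ precisely when $\log\kappa(\gamma) \leq (1-c)\,\ell(\gamma) + O(1)$ uniformly for the geodesics tracked by $K$, for some $c>0$; and $\rho$ is $d$-Anosov exactly when this holds over the whole flow. Thus the construction reduces to producing an auxiliary $\phi$ whose \emph{logarithmic distortion rate} $\log\kappa(\gamma)/\ell(\gamma)$ separates the Birman--Series set from the full unit tangent bundle. Concretely, I would require \emph{(a)} that $\limsup \log\kappa(\gamma)/\ell(\gamma) < 1$ \emph{uniformly over the Birman--Series set}, which by the reduction makes $\rho$ simple $d$-Anosov, and \emph{(b)} that $\log\kappa(\gamma_n)/\ell(\gamma_n) \to 1$ along a sequence of \emph{non-simple} closed geodesics $\gamma_n$, which destroys the uniform gap and prevents $\rho$ from being $d$-Anosov.

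Second, to place $\rho$ in the boundary of the $d$-Anosov domain rather than merely outside it, I would build $\phi$ inside a one-parameter family $\{\phi_t\}$ for which the global distortion rate $\sup_\gamma \log\kappa_t(\gamma)/\ell(\gamma)$ crosses the threshold $1$ at some $t = t_0$, while the distortion rate over simple geodesics stays strictly below $1$ for all $t$ near $t_0$. Then $\rho_t = j\otimes\phi_t$ is genuinely $d$-Anosov on one side of $t_0$, the representation $\rho = \rho_{t_0}$ is a limit of these, and property \emph{(a)} guarantees that it remains simple $d$-Anosov. Since the set of $d$-Anosov representations is open and $\rho$ fails to be $d$-Anosov by \emph{(b)}, it necessarily lies on the boundary of that set.

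The hard part will be the construction of $\phi$ realizing the dichotomy \emph{(a)}--\emph{(b)}, and in particular the \emph{uniform} bound in \emph{(a)}. Controlling $\kappa$ on each individual simple geodesic is not enough: the Birman--Series set is a single closed flow-invariant set, and domination is an estimate that must hold uniformly along it. This is exactly where I expect the Birman--Series theorem to enter: the sparseness (Hausdorff dimension $1$) of the closure of simple geodesics should be leveraged to bound how much anisotropy $\phi(\gamma)$ can accumulate while $\gamma$ stays in this set, whereas self-intersections (absent from the Birman--Series set) allow $\phi$ to build distortion at the full exponential rate along the $\gamma_n$ of \emph{(b)}. Making this quantitative---choosing $\phi$ so that its singular-value growth is genuinely governed by the combinatorics that distinguish simple from non-simple geodesics, and then upgrading pointwise estimates to a uniform bound over the whole closure---is the crux of the argument.
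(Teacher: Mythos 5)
Your write-up is a reduction, not a proof: everything is funneled into the existence of an auxiliary representation $\phi\colon\Gamma\to\SL(d,\Cb)$ whose distortion rate $\log\kappa(\gamma)/\ell(\gamma)$ is uniformly bounded away from $1$ over the Birman--Series set yet reaches $1$ elsewhere, and you explicitly leave the construction of such a $\phi$ open (``the crux of the argument''). That missing object is the entire content of the theorem, and it is far from clear it exists by the route you sketch. In particular, the suggestion that the Hausdorff-dimension-$1$ sparseness of the Birman--Series set should ``bound how much anisotropy $\phi(\gamma)$ can accumulate'' has no mechanism behind it: metric sparseness of a flow-invariant set does not constrain the singular-value growth of an arbitrary linear cocycle along it. A secondary gap: your criterion is phrased in terms of closed geodesics $\gamma$ and their lengths $\ell(\gamma)$, but domination in restriction to a closed invariant set such as $\mathcal F_0(S)$ is a uniform estimate over all orbit segments (cf.\ Theorem \ref{prevresult}, which quantifies over $\Gamma^+_{P,b}$, not over conjugacy classes); per-closed-geodesic eigenvalue estimates do not upgrade to this without further argument, and no closed-orbit characterization of restricted Anosov is available here. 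Finally, even granting the reduction, realizing the boundary condition requires the global distortion rate to cross the threshold exactly while the restricted rate stays strictly below it --- a delicate coincidence you assume rather than arrange.

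For comparison, the paper sidesteps all of this by changing the shape of the construction: it passes to a degree-$d$ cover $\widetilde S\to S$ containing a simple closed curve $c$ whose projection to $S$ is non-simple, takes a geometrically finite (cusped, non-quasifuchsian) representation $\rho'\colon\pi_1(\widetilde S)\to\SL(2,\Cb)$ pinching $c$ (Proposition \ref{existsgfrep}), and sets $\rho=\mathrm{Ind}^{\Gamma}_{\Gamma'}(\rho')$. The simple Anosov property then follows from the relative Anosov theory (Proposition \ref{relofsubflows}, Theorem \ref{relativeimplyflowanosov}) plus the induction lemma (Lemma \ref{toinducedrep}), the failure of Anosov is exact because $\rho(\gamma)$ has unit-modulus eigenvalues in positions $d$ and $d+1$ for $\gamma$ representing $c$, and membership in the closure of the Anosov domain comes from approximating $\rho'$ by quasifuchsian representations. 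Note that the reason a cover is needed is precisely that accidental parabolics of geometrically finite surface groups can only occur along simple closed curves --- the same obstruction that makes it hard to see what your $\phi$ could possibly be when one tries $d=2$ directly on $S$.
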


Taking a ``generic deformation'', we obtain the following:

\begin{corollary}
    Let $\Gamma$ be the fundamental group of a closed connected hyperbolic surface. Then, for every $d\geq 2$, there exist simple $d$-Anosov representation of $\Gamma$ into $\SL(2d,\Cb)$ with analytically dense image.
\end{corollary}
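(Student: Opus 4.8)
The plan is to obtain the corollary from the theorem by perturbing $\rho_0$, staying inside the simple $d$-Anosov locus while opening up the image. Write $\rho_0$ for the simple $d$-Anosov representation supplied by the theorem, lying on the boundary of the $d$-Anosov domain.

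First I would record that the simple $d$-Anosov condition is open in $\Hom(\Gamma,\SL(2d,\Cb))$. Being simple $d$-Anosov means that the linear cocycle associated with $\rho$ over the geodesic flow admits a dominated splitting of rank $d$ in restriction to the Birman--Series set $\mathcal{B}$, which is a \emph{compact} flow-invariant subset of $T^1 S$. A dominated splitting over a compact invariant set is robust — it persists under $C^0$-small perturbations of the cocycle — and the cocycle depends continuously on $\rho$; this is precisely the mechanism behind Proposition \ref{p-MCGProperlyDiscontinuous}. Hence there is a neighborhood $U$ of $\rho_0$, open in the analytic topology, all of whose points are simple $d$-Anosov. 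It remains to locate inside $U$ a representation with analytically dense image.

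Next I would show that Zariski density is generic. A representation fails to be Zariski dense exactly when its image lies in a proper algebraic subgroup, hence in a maximal one. Since $\SL(2d,\Cb)$ is semisimple, the Dynkin classification gives only finitely many conjugacy classes of maximal proper closed connected subgroups $H_i$, so the non-Zariski-dense locus $V$ is contained in the finite union of the sets $\overline{\SL(2d,\Cb)\cdot\Hom(\Gamma,H_i)}$. As $\Gamma$ is a surface group of genus $g\geq 2$, a dimension count gives $\dim\Hom(\Gamma,H_i)<\dim\Hom(\Gamma,\SL(2d,\Cb))$, so each of these is a proper subvariety. Because the representation variety of a surface group into $\SL(2d,\Cb)$ is irreducible, and it contains Zariski-dense representations — pull back an analytically dense representation of a free group along a surjection $\Gamma\twoheadrightarrow F_g$ — the locus $V$ is a \emph{proper} subvariety, hence nowhere dense. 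The non-empty analytic-open set $U$ is therefore not contained in $V$, so $U$ contains (a dense set of) Zariski-dense representations.

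Finally I would upgrade Zariski density to analytic density using that $\SL(2d,\Cb)$ is almost simple. If $H\leq\SL(2d,\Cb)$ is Zariski dense, then the identity component of its closure in the analytic topology is normalized by $H$, hence normal in $\SL(2d,\Cb)$ by Zariski density, hence trivial or the whole group; so a Zariski-dense subgroup is either discrete or analytically dense. It therefore suffices to produce in $U$ a Zariski-dense representation that is not discrete, and this is where the main difficulty lies, since discrete Zariski-dense representations can a priori fill open sets (witness Fuchsian loci inside real forms). Here I would exploit that $\Gamma$ is very far from being a lattice in the large complex group $\SL(2d,\Cb)$, so that a discrete image cannot be locally rigid in the complex representation variety: a generic complex-analytic perturbation of a Zariski-dense representation, taken small enough to remain in $U$, destroys discreteness while preserving Zariski density. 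The resulting representation is simultaneously simple $d$-Anosov, Zariski dense and non-discrete, hence analytically dense, which is the assertion of the corollary.
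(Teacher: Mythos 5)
Your first two steps (openness of the simple Anosov locus over the compact Birman--Series set, and genericity of Zariski density in the irreducible variety $\Hom(\Gamma,\SL(2d,\Cb))$) match the paper's argument, and your reduction of analytic density to ``Zariski dense and non-discrete'' is correct. The gap is in the last step, and it is exactly the step you flag as the main difficulty: the claim that ``a discrete image cannot be locally rigid in the complex representation variety,'' so that a generic small perturbation destroys discreteness, is unjustified and is false as a general principle. Discrete faithful representations can and do fill analytically open subsets of \emph{complex} representation varieties: quasi-Fuchsian representations into $\SL(2,\Cb)$, and more generally Anosov representations into $\SL(2d,\Cb)$, are open sets consisting entirely of discrete faithful representations, none of which are lattices. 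Worse, the representation $\rho_0$ you start from lies on the boundary of the $d$-Anosov domain, so every neighborhood $U$ of $\rho_0$ already contains a nonempty open set of discrete faithful (Anosov) representations; ``genericity'' of non-discreteness in $U$ is precisely what has to be proved, not assumed.

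The paper closes this gap by using the specific structure of $\rho_0$ rather than an abstract perturbation argument. By construction $\rho_0=\mathrm{Ind}^\Gamma_{\Gamma'}(\rho')$ where $\rho':\Gamma'\to\SL(2,\Cb)$ is geometrically finite with a parabolic, hence discrete and faithful but \emph{not} convex-cocompact. Sullivan's structural stability theorem then guarantees that $\rho'$ is a limit of representations $\rho'_n$ that fail to be discrete and faithful (concretely, one can open up the cusp so the parabolic becomes an infinite-order elliptic). Inducing, $\rho_n=\mathrm{Ind}^\Gamma_{\Gamma'}(\rho'_n)\to\rho_0$, these lie in the open set $\mathrm{An}^d_{\mathcal F_p}$ for $n$ large and are not discrete and faithful; intersecting the resulting nonempty open set with the dense Zariski-open locus $\Hom^Z$ finishes the proof. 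So your overall strategy (open set $\cap$ Zariski-dense $\cap$ non-discrete) is the right one, but the non-discreteness must be imported from the $\SL(2,\Cb)$ building block via Sullivan's theorem; it cannot be obtained by a genericity argument in $\Hom(\Gamma,\SL(2d,\Cb))$ alone.
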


As a consequence, we obtain domains of discontinuity of the mapping class group action that properly contain the domain of Anosov representations.

\begin{corollary}
    Let $\Gamma$ be the fundamental group of a closed connected hyperbolic surface. Then, for every $d\geq 2$, there exists an open domain of discontinuity for the action of $\Out(\Gamma)$ on $\Hom(\Gamma, \SL(2d,\Cb))/\SL(2d,\Cb)$ which contains points of the boundary of domain of Anosov representations.
\end{corollary}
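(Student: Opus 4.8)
The plan is to deduce the final corollary from the two preceding results, principally the first corollary (existence of simple $d$-Anosov representations with analytically dense image) together with Proposition \ref{p-MCGProperlyDiscontinuous} (that simple Anosov representations form an open domain of discontinuity for $\Out(\Gamma)$). The construction in the corollary is essentially an assembly: I would take the open set $\Omega \subset \Hom(\Gamma, \SL(2d,\Cb))/\SL(2d,\Cb)$ consisting of all simple $d$-Anosov representations modulo conjugation, and verify that it has the two required features: it is a domain of discontinuity, and it meets the boundary of the Anosov locus.

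First I would recall that by Proposition \ref{p-MCGProperlyDiscontinuous}, $\Omega$ is open and the $\Out(\Gamma)$-action on it is properly discontinuous, so $\Omega$ is a legitimate domain of discontinuity. Next I would invoke the Theorem to produce a representation $\rho_0$ that is simple $d$-Anosov and lies in the boundary of the $d$-Anosov locus; its conjugacy class $[\rho_0]$ is then a point of $\Omega$ lying on the boundary of the Anosov domain. This already shows $\Omega$ contains boundary points of the Anosov domain, which is the assertion of the corollary. The role of the intermediate Corollary (analytic density) is to certify that these new points are genuinely non-Anosov and even non-discrete, so that $\Omega$ strictly and substantially enlarges the classical Anosov domain rather than merely touching its closure along a negligible set; I would cite it to emphasize that $\Omega$ contains representations with dense image, hence points far from the Fuchsian/quasifuchsian region.

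The one point requiring a little care is the passage between $\Hom(\Gamma,\SL(2d,\Cb))$ and the character variety / GIT quotient: I must check that the simple Anosov condition is conjugation-invariant (so that $\Omega$ descends to a well-defined subset of the quotient) and that openness survives the quotient map. Conjugation-invariance is immediate since the dominated splitting condition over the geodesic flow is defined up to the change of local system induced by conjugation; openness of $\Omega$ downstairs follows because the quotient map $\Hom \to \Hom/\SL(2d,\Cb)$ is open on the locus of representations with reductive (indeed here Zariski-dense or discrete-Anosov-adjacent) image, and Anosov-type conditions are stable under small deformation as recalled in the introduction. I would also note that the boundary point $[\rho_0]$ from the Theorem is not itself $d$-Anosov, so $\Omega$ strictly contains the Anosov domain.

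The main obstacle, such as it is, is not in the logical structure---which is a short deduction---but in confirming that the specific boundary representation furnished by the Theorem descends to a non-singular point of the GIT quotient where the quotient topology behaves well, i.e. that $[\rho_0]$ lies in the open stratum on which $\Out(\Gamma)$ acts by homeomorphisms and proper discontinuity is meaningful. Given the analytic density supplied by the Corollary, the generic deformation has Zariski-dense (in particular irreducible) image, so its conjugacy class is a smooth point of the character variety and no GIT pathology arises; I would therefore phrase the corollary's domain as an open neighborhood inside $\Omega$ of such a generic deformation, which simultaneously meets the Anosov boundary and lies in the good locus of the quotient.
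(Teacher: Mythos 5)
Your proposal is correct and follows essentially the same route as the paper: the domain is taken to be the set of simple $d$-Anosov representations modulo conjugation, which is an open domain of discontinuity by Proposition \ref{p-MCGProperlyDiscontinuous} (proved in the paper as Theorem \ref{simpleAnosovprodis}, which also handles the conjugation-invariance and descent to the quotient that you flag), and it meets the boundary of the Anosov domain by the main Theorem. The extra discussion of analytic density and GIT strata is not needed for the statement as given, but it does no harm.
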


In particular, the domain of Anosov representations modulo conjugation is not a maximal domain of discontinuity.

\subsection{Further results and open questions}

\subsubsection{Simple $P$-Anosov representations into $G$}

More generally, there is a notion of simple $P$-Anosov representation into $G$, for any pair of a semisimple (real or complex) linear group $G$ and a parabolic subgroup $P$. Here we will focus on simple $P_d$-Anosov representations into $\SL(2d,\Cb)$, where $P_d$ is the stabilizer of a $d$-dimensional subspace of $\Cb^d$, but one could easily elaborate on these to build more examples. For instance, the direct sum of one our representations with a trivial representation will give simple $d$-Anosov representations into $\SL(d',\Cb)$ for any $d'\geq 2d$. Another example is the following: Among the exotic simple $2$-Anosov representations into $\SL(4,\Cb)$ that we construct, some take values in the complex symplectic group $\Sp(4,\Cb)$. Through the isomorphism $\Sp(4,\Cb) \simeq \SO(5,\Cb)$, one obtains simple $1$-Anosov representations into $\SO(5,\Cb)\subset \SL(5,\Cb)$.

These constructions, however, do not seem to exhaust all the possibilities. In particular, it seems that our construction cannot provide \emph{simple Borel Anosov} representations, i.e. simple Anosov with respect to a minimal parabolic subgroup. This raises the following question:

\begin{question}
    Does there exist a representation of a closed oriented surface group into $\SL(d,\Cb)$, $d\geq 2$ that is simple Borel Anosov but not Borel Anosov? In particular, does there exist a representation of a closed oriented surface group into $\SL(3,\Cb)$ which is simple Anosov but not Anosov ?
\end{question}

On a related topic, Maloni, Martone, Mazzoli and Zhang have initiated in \cite{MMMZ} the study of representations which are Borel Anosov in restriction to a fixed lamination. 

\subsubsection{Other mapping class group invariant subflows}

We develop more generally the basic properties of Anosov representations in restriction to a subflow of the geodesic flow. In particular, any subflow which is globally preserved by the mapping class group gives rise to a domain of discontinuity for the mapping class group on the character variety. We mention various examples in Section \ref{MCGSection}. There, we also prove that any closed subflow of the geodesic flow that is preserved by a finite index subgroup of the mapping class group contains the Birman--Series set. As a consequence, all these other domains of discontinuity associated to subflows are contained in the domain of simple Anosov representations. This provides evidence for a positive answer to the following question:

\begin{question}
    Is the domain of simple Anosov representations a maximal domain of discontinuity for the mapping class group action on a character variety of a surface group?
\end{question}

\subsection{Structure of the paper}
In Section \ref{PriliminariesSection}, we introduce a general notion of \emph{$\Gamma$-flow} of a finitely generated group, of which the main examples are subflows of geodesic flows of hyperbolic or relatively hyperbolic groups. In Section \ref{RestrictedAnosovSection}, we develop the general notion of Anosov representation \emph{in restriction to a $\Gamma$-flow}, which specially emphasis on relatively Anosov and simple Anosov representations. This section contains several general results of independent interest which will make the main construction rather natural.

Section \ref{ConstructionSection} presents our main construction of exotic simple Anosov representations. In a word, these are obtain as the induced representations of a geometrically finite but not quasifuchsian representation into $\SL(2,\Cb)$ of the fundamental group a covering of degree $d$. Finally, Section \ref{MCGSection} develops the applications to mapping class group actions on character varieties.

\subsection{Acknowledgements}
We thank Samuel Bronstein, Fr\'ed\'eric Paulin, Juan Souto, Tengren Zhang and Feng Zhu for references and interesting discussions related to our work.

\section{Geodesic flows of hyperbolic and relatively hyperbolic groups}\label{PriliminariesSection}

\subsection{Hyperbolic and relatively hyperbolic groups}
Let $(X,d_X)$ be a metric space. Recall that a map $\ell$ from $\Rb$ (resp. $\Rb_{\geq 0}$, $[a,b]\subset \Rb$) to $X$ is a \emph{geodesic} (resp. \emph{geodesic ray}, \emph{geodesic segment}) if \[d_X(\ell(t),\ell(s)) = \vert t-s\vert\] for all $s,t$ in the interval of definition. The space $X$ is called \emph{geodesic} if any two points are joined by a geodesic segment, and called \emph{proper} if closed balls are compact. We say that $X$ is \emph{taut} if every point is at uniformly bounded distance from a bi-infinite geodesic. When $(X,d_X)$ is proper and geodesic, we say that it is \emph{$\delta$-hyperbolic}, if for any geodesic triangle in $X$, there exists a point at distance at most $\delta$ from all three sides of the triangle, which is called a center of the triangle. The \emph{(Gromov) boundary} of a $\delta$-hyperbolic space $X$, denoted by $\partial_\infty X$, is defined to be the asymptotic classes of geodesic rays.

When $(X,d_X)$ is a $\delta$-hyperbolic space, a \emph{horofunction} about a boundary point $p\in \partial_\infty X$ is a function $h: X\to \Rb$ such that for any $x,x'\in X$,
\[|(h(x)-d_X(x,x_0))-(h(x')-d_X(x',x_0))|\] is uniformly bounded, where $x_0$ is a center of the ideal triangle with vertices $x, x'$ and $p$. A subset $B\subset X$ is called a \emph{horoball} centered at $p$ if there exists a horofunction $h$ about $p$, such that $B = \{x\in X| h(x)< 0\}$.

Finally, a finitely generated group $\Gamma$ is \emph{(Gromov) hyperbolic} if it admits a properly discontinuous and cocompact isometric action on a $\delta$-hyperbolic space $(X,d_X)$ (for some constant $\delta$). The space $X$ is called a \emph{Gromov model} of $\Gamma$.

The notion of Gromov hyperbolic groups is a far-reaching generalization of convex-cocompact Kleinian groups. Similarly, the notion of \emph{relatively hyperbolic groups} generalizes that of geometrically finite Kleinian groups. We follow here the definition given by Gromov \cite{Gromov}.

Let $\Gamma$ be a finitely generated group and let $\Pi = \{\Pi_i\}_{i\in I}$ be a finite collection of finitely generated, infinite subgroups of $\Gamma$. Let \[\Pi^{\Gamma} = \{\gamma\Pi_i\gamma^{-1}\ |\ \gamma\in \Gamma,\Pi_i\in\Pi\}\] be the collection of conjugates of the subgroups in $\Pi$.

\begin{definition}
The pair $(\Gamma,\Pi)$ is called a \emph{relatively hyperbolic pair}, and $\Gamma$ is called \emph{hyperbolic relative to} $\Pi$, if there exists a $\delta$-hyperbolic space $(X,d_X)$ with a properly discontinuous isometric action of $\Gamma$ such that
\begin{itemize}
    \item[(1).] $X$ is either taut or a horoball;
    \item[(2).] There exists $\mathcal{B}=\{B_i\}_{i\in I}$, a collection of horoballs of $X$, such that $\mathcal{B}^{\Gamma} = \{\gamma B_i\ |\ \gamma\in\Gamma, i\in I\}$ is a collection of disjoint open horoballs with $\gamma\Pi_i\gamma^{-1}$ the stabilizer of $\gamma B_i$ for each $\gamma\in\Gamma$ and $i\in I$;
    \item[(3).] $\Gamma$ acts on $X^{th} = X \setminus \bigcup_{i\in I,\gamma\in \Gamma} \gamma B_i $, the thick part of $X$, cocompactly.
\end{itemize}
The space $X$ is called a \emph{Gromov model of} $(\Gamma,\Pi)$. The Gromov boundary of $X$ is called the Bowditch boundary of $(\Gamma,\Pi)$, denoted by $\partial_\infty (\Gamma,\Pi) = \partial_\infty X$. The subgroups in $\Pi^{\Gamma}$ are called \emph{peripheral subgroups} of $\Gamma$ and the centers of horoballs in $\mathcal{B}^{\Gamma}$ are called \emph{parabolic points}.
\end{definition}

While two Gromov models $X$ and $X'$ of a relatively hyperbolic pair $(\Gamma,\Pi)$ are not necessarily quasi-isometric, there always exists a $\Gamma$-equivariant homeomorphism between their Gromov boundaries (see \cite{Bow12}). The Bowditch boundary $\partial_\infty (\Gamma,\Pi)$ is thus well-defined independently of choice of the Gromov models.

Recall the definition of a convergence group action.

\begin{definition}\label{defconvergencegroup}
    Let $Z$ be a metrizable compact set. An action of a discrete group $\Gamma$ by homeomorphisms on $Z$ is a \emph{convergence group action} if for every unbounded sequence $(\gamma_n)_{n\in \Nb}$ in $\Gamma$, there exists a subsequence $(\gamma_{n_k})_{k\in \Nb}$ and a pair of points $(x_-,x_+)\in Z^2$ such that 
    $\gamma_{n_k} z \to x_+$ as $k\to +\infty$
    for all $z\in K\setminus \{x_-\}$. The points $x_-$ and $x_+$ are called respectively the \emph{repelling} and \emph{attracting} points of the subsequence $(\gamma_{n_k})_{k\in \Nb}$. In this case, we say a point $p \in Z$ is
    \begin{itemize}
        \item[(a)] a \emph{conical limit point} if there exists a sequence $(\gamma_n)_{n\in \Nb}$ with attracting point $p$ and repelling point distinct from $p$;
        \item[(b)] a \emph{bounded parabolic point} if the stabilizer of $p$ in $\Gamma$ is infinite and acts properly discontinuously and cocompactly on $Z\setminus \{p\}$.  
    \end{itemize}
    We say that the $\Gamma$-action on $Z$ is \emph{geometrically finite} if $Z$ consists of only conical limit points and bounded parabolic points.
\end{definition}

If $\Gamma$ is a discrete group of isometries of a $\delta$-hyperbolic space $(X,d_X)$, then the action of $\Gamma$ on $\partial_\infty \Gamma$ is a convergence group action. Moreover, a point $p\in \partial_\infty X$ is a conical limit point if and only if there exists a sequence $(\gamma_n)_{n\in \Nb}$ such that, for some (hence any) point $o\in X$ and some (hence any) geodesic ray $\ell$ converging to $p$, the sequence $(\gamma_n o)_{n\in \Nb}$ converges to $p$ and remains at bounded distance from $\ell$.

When $(\Gamma,\Pi)$ is a relatively hyperbolic pair, $\Gamma$ acts on $\partial_\infty(\Gamma,\Pi)$ geometrically finitely (see \cite{Bow12}). Conversely, Yaman \cite{Yam} proved the following theorem.

\begin{theorem}[Yaman \cite{Yam}]
    Let $\Gamma$ be a finitely generated group with a convergence group action on a perfect metrizable compact space $Z$. If the $\Gamma$-action is geometrically finite, then there are finitely many orbits $\{\Gamma p_i\}_{i\in I}$ of bounded parabolic points, and $\Gamma$ is hyperbolic relative to $\Pi = \{\Stab_\Gamma(p_i)\}_{i\in I}$. Moreover, $Z$ is $\Gamma$-equivariantly homeomorphic to the Bowditch boundary of $(\Gamma,\Pi)$.
\end{theorem}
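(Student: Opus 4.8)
The plan is to produce, from the purely dynamical hypothesis, a proper geodesic $\delta$-hyperbolic space $X$ on which $\Gamma$ acts properly discontinuously and isometrically, carrying a $\Gamma$-invariant family of disjoint horoballs centred at the parabolic points, with cocompact action on the thick part and with $\partial_\infty X$ equivariantly homeomorphic to $Z$; this is exactly the data making $(\Gamma, \{\Stab_\Gamma(p_i)\})$ a relatively hyperbolic pair with Bowditch boundary $Z$. The argument splits into a \emph{dynamical} half, which extracts coarse structure from the convergence action, and a \emph{constructive} half, which builds $X$ and identifies its boundary.

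First I would recall the two standard facts about convergence actions and sharpen them using geometric finiteness. The action of $\Gamma$ on the space $Z^{(3)}$ of distinct triples is always properly discontinuous; geometric finiteness upgrades this to cocompactness away from the cusps, in the sense that after removing a suitable $\Gamma$-invariant family of open neighbourhoods $U_p \subset Z$ of the parabolic points, the induced action on distinct pairs with both coordinates outside the cusps is cocompact. The finiteness of orbits of bounded parabolic points then follows: for each such point $p$ one uses that $\Stab_\Gamma(p)$ acts cocompactly on $Z \setminus \{p\}$ to choose the $U_p$ precompact, pairwise disjoint and $\Gamma$-invariant, and since $Z$ is compact an invariant disjoint family of such neighbourhoods can meet only finitely many orbits. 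The same cocompactness shows each $\Stab_\Gamma(p_i)$ is finitely generated and infinite, as is required of peripheral subgroups.

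Next I would construct $X$. The cleanest route is combinatorial: start from a Cayley graph of $\Gamma$ and glue to each left coset of each peripheral subgroup $\Stab_\Gamma(p_i)$ a combinatorial horoball — the standard graph built over the coset that is itself hyperbolic and inside which the coset sits as a horosphere. Then $\Gamma$ acts properly and cocompactly on the thick part (essentially the Cayley graph), the open horoballs are pairwise disjoint with the prescribed stabilizers, and $X$ is taut, so conditions (1)--(3) of the definition of a Gromov model hold \emph{as soon as} $X$ is shown to be hyperbolic with $\partial_\infty X$ homeomorphic to $Z$.

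The main obstacle is precisely this last point, which is the heart of Yaman's theorem: one must extract metric hyperbolicity and the boundary identification from topological and dynamical data alone. I would define a map $\partial_\infty X \to Z$ by sending a boundary direction that eventually exits every horoball to the conical limit point approached by the group elements it passes through (well defined by the convergence property, since a subsequence has an attracting point independent of the choices), and a direction trapped in the horoball over a coset of $\Stab_\Gamma(p_i)$ to the corresponding parabolic point; one then checks this assignment is an equivariant homeomorphism. Hyperbolicity I would reduce to a uniform Gromov-product (equivalently, thin-triangles) estimate: the convergence property yields north--south control of group elements that bounds Gromov products in the thick part, the combinatorial horoballs are uniformly hyperbolic and account for the cusp directions, and geometric finiteness is what glues these two regimes together with uniform constants. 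The genuinely hard step — and the one I expect to resist — is the \emph{uniformity}: converting the sequential dynamical statements (existence of attracting and repelling points along subsequences) into estimates with constants independent of the elements of $\Gamma$ involved, which forces one to exploit the compactness of $Z$ and the quantitative content of geometric finiteness rather than the mere existence of limits.
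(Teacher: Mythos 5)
You should first note that the paper offers no proof of this statement: it is imported verbatim from Yaman \cite{Yam} as an external result, so there is nothing internal to compare your argument against. The question is therefore whether your outline would stand on its own, and it would not: the steps you defer are precisely the content of the theorem. Your plan is to glue combinatorial horoballs onto a Cayley graph of $\Gamma$ along the cosets of the $\Stab_\Gamma(p_i)$ and then prove that the resulting space is hyperbolic with boundary $Z$. But by the Groves--Manning characterisation, that cusped space is hyperbolic \emph{if and only if} $\Gamma$ is hyperbolic relative to $\Pi$, so ``reduce hyperbolicity to a uniform thin-triangles estimate extracted from north--south dynamics'' is a restatement of the theorem rather than an argument; you give no mechanism for converting the sequential convergence statements (attracting/repelling points along subsequences) into estimates with uniform constants, and you yourself flag this as the step you expect to resist. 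Yaman's actual proof avoids this circularity by following Bowditch's topological characterisation: she builds an annulus system and a hyperbolic quasimetric directly from the topology and dynamics on $Z$, and constructs the Gromov model out of that data, rather than trying to certify hyperbolicity of a pre-assembled cusped Cayley graph.

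Two further points in your dynamical half are genuine gaps rather than routine checks. First, finite generation of $\Stab_\Gamma(p_i)$ does not follow from cocompactness of its action on $Z\setminus\{p_i\}$: that space is merely locally compact and metrisable, not geodesic, so no \v{S}varc--Milnor-type argument applies; in Yaman's original paper finite generation of the bounded parabolic stabilisers is taken as a \emph{hypothesis} (later shown removable by other means), so this cannot be waved through. Second, the finiteness of the set of orbits of bounded parabolic points is itself a nontrivial lemma (due to Tukia); your proposed construction of a $\Gamma$-invariant disjoint family of precompact neighbourhoods $U_p$ presupposes control over how distinct parabolic points accumulate on one another, which is part of what needs proving. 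As it stands, your proposal is a correct map of where the difficulties lie, but it does not cross any of them.
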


A relatively hyperbolic pair $(\Gamma,\Pi)$ is called \emph{elementary} if $\partial_\infty (\Gamma,\Pi)$ contains at most two points. This happens when the $\Pi_i$ are finite, and $\Gamma$ is finite or virtually isomorphic to $\Zb$, and when $\Pi$ consists of only one subgroup of $\Gamma$ of finite index. 

Otherwise, $\partial_\infty (\Gamma,\Pi)$ is a perfect metrizable compact set and the pair $(\Gamma, \Pi)$ is called \emph{non-elementary}. From now on, we always assume that the hyperbolic groups and hyperbolic pairs we consider are non-elementary.

We will be interested in the situation where a Gromov hyperbolic group also admits a relatively hyperbolic structure. This was studied by Osin \cite{Osin06}, Bowditch \cite{Bow12}, Tran \cite{Tran13}, Manning \cite{Man15} etc.

Let $\Gamma$ be a group, a finite collection $\Pi = \{\Pi_i\}_{i\in I}$ of subgroups of $\Gamma$ is called \emph{almost malnormal} if for any $i,j\in I$, $\Pi_i\cap \gamma\Pi_j\gamma^{-1}$ is infinite only when $i=j$ and $\gamma\in \Pi_i$.

\begin{theorem}[\cite{Osin06} Proposition 2.36, Lemma 5.4, \cite{Bow12} Theorem 7.11] \label{thm - Hyperbolic + Relatively Hyperbolic}
Let $\Gamma$ be a nonelementary hyperbolic group and $\Pi$ a finite, almost malnormal collection of quasiconvex subgroups. Then $(\Gamma,\Pi)$ is relatively hyperbolic. Conversely, if $\Gamma$ is both hyperbolic and hyperbolic relative to $\Pi$, then $\Pi$ is an almost malnormal collection of quasiconvex subgroups.
\end{theorem}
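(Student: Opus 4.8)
The statement is an equivalence, so I would treat the two directions separately, in both cases reducing to the dynamical characterization of relative hyperbolicity provided by Yaman's theorem stated above.

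\textbf{Forward direction.} Suppose $\Gamma$ is hyperbolic and $\Pi=\{\Pi_i\}_{i\in I}$ is finite, almost malnormal, and quasiconvex. The plan is to construct a compactum on which $\Gamma$ acts geometrically finitely and then invoke Yaman's theorem. Since each $\Pi_i$ is quasiconvex in the hyperbolic group $\Gamma$, it is itself hyperbolic and quasi-isometrically embedded, and so has a well-defined limit set $\Lambda(\Pi_i)\subset\partial_\infty\Gamma$, the image of $\partial_\infty\Pi_i$. First I would check that almost malnormality forces the limit sets of distinct conjugates $g\Pi_i g^{-1}$ and $h\Pi_j h^{-1}$ to be disjoint: by the theorem of Gitik, Mitra, Rips and Sageev on intersections of quasiconvex subgroups, the intersection of their limit sets equals $\Lambda(g\Pi_i g^{-1}\cap h\Pi_j h^{-1})$, which is empty unless that group intersection is infinite, whereupon almost malnormality forces the two conjugates to coincide. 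Properness of the action then shows $\{g\Lambda(\Pi_i)\}$ is a null family of disjoint closed sets. I would define $Z=\partial_\infty\Gamma/\!\sim$ by collapsing each $g\Lambda(\Pi_i)$ to a point; this is a metrizable compactum carrying a $\Gamma$-action by homeomorphisms, and the convergence property is inherited from $\partial_\infty\Gamma$. Finally I would verify that each collapsed point is bounded parabolic, its stabilizer being the corresponding peripheral conjugate acting cocompactly on the complement (because $\Pi_i$ acts cocompactly on the weak hull of $\Lambda(\Pi_i)$), while every non-collapsed point is conical. Yaman's theorem then gives that $(\Gamma,\Pi)$ is relatively hyperbolic with Bowditch boundary $Z$.

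\textbf{Converse direction.} Suppose $\Gamma$ is both hyperbolic and hyperbolic relative to $\Pi$. Almost malnormality is a general feature of relatively hyperbolic pairs: an infinite subgroup of $\Gamma$ fixes at most one point of the Bowditch boundary, so if $\Pi_i\cap g\Pi_j g^{-1}$ were infinite it would fix both the parabolic point stabilized by $\Pi_i$ and that stabilized by $g\Pi_j g^{-1}$, forcing these to coincide; disjointness of the horoballs in a Gromov model then forces $i=j$ and $g\in\Pi_i$. For quasiconvexity I would use that, since $\Gamma$ is word-hyperbolic and relatively hyperbolic, the Gromov boundary $\partial_\infty\Gamma$ surjects $\Gamma$-equivariantly onto the Bowditch boundary $\partial_\infty(\Gamma,\Pi)$, with the preimage of each parabolic point $p$ equal to the limit set of $\Stab_\Gamma(p)=\Pi_i$. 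Pulling the cocompact action of $\Pi_i$ on $\partial_\infty(\Gamma,\Pi)\setminus\{p\}$ back through this surjection, and combining with geometric finiteness, yields that $\Pi_i$ acts cocompactly on the convex hull of its limit set in a Gromov model of $\Gamma$, which is exactly quasiconvexity.

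\textbf{Main obstacle.} The delicate step is quasiconvexity in the converse direction: being a peripheral subgroup a priori gives only \emph{relative} quasiconvexity, and in general relatively hyperbolic groups peripheral subgroups can be distorted, so undistortedness by itself would not suffice. The argument genuinely uses hyperbolicity of the ambient group, through the equivariant surjection $\partial_\infty\Gamma\to\partial_\infty(\Gamma,\Pi)$ and the resulting control on the convex hull of each peripheral limit set. Establishing this surjection and verifying that it collapses precisely the peripheral limit sets is the technical heart of the converse, and is where I would rely on the cited results of Osin and Bowditch.
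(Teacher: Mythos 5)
The paper itself offers no proof of this statement --- it is quoted from the literature (Osin, Proposition 2.36 and Lemma 5.4; Bowditch, Theorem 7.11) --- so there is no in-paper argument to compare against and I will judge your proposal on its own terms. Your forward direction is a reasonable sketch of a known strategy: collapse the pairwise disjoint, null family of peripheral limit sets in $\partial_\infty\Gamma$, check that the induced convergence action on the quotient is geometrically finite, and invoke Yaman's theorem. Each verification (upper semicontinuity of the decomposition, descent of the convergence property, bounded parabolicity of the collapsed points, conicality of the remaining points) requires real work, but the outline is sound and matches how several published proofs of Bowditch's Theorem 7.11 proceed.

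The converse is where there is a genuine gap, and it is concentrated in your ``main obstacle'' paragraph. First, the claim that ``in general relatively hyperbolic groups peripheral subgroups can be distorted'' is false: peripheral subgroups of a finitely generated relatively hyperbolic group are always undistorted, and this is precisely Osin's Lemma 5.4 --- one of the two results to which the theorem is attributed. The short, correct argument for quasiconvexity is therefore exactly the one you dismissed: $\Pi_i$ is quasi-isometrically embedded in $\Gamma$ by Osin's lemma, and a quasi-isometrically embedded subgroup of a hyperbolic group is quasiconvex by the Morse lemma. Second, the alternative route you propose is circular in the context of this paper: the equivariant surjection $\partial_\infty\Gamma\to\partial_\infty(\Gamma,\Pi)$ collapsing precisely the peripheral limit sets is Theorem \ref{relboundary} (due to Tran and Manning), and that theorem is stated under the hypothesis that $\Pi$ is an almost malnormal collection of \emph{quasiconvex} subgroups --- the very conclusion you are trying to establish. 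Your almost malnormality argument is essentially correct, though the assertion that an infinite subgroup fixes at most one point of the Bowditch boundary is not literally true (a loxodromic cyclic subgroup fixes two); the statement you actually need is that an infinite subgroup of a peripheral subgroup has limit set equal to the corresponding parabolic point, and that distinct peripheral conjugates stabilize distinct parabolic points.
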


In the situation of Theorem \ref{thm - Hyperbolic + Relatively Hyperbolic}, the relation between the Gromov and Bowditch boundaries is described in the following way.

\begin{theorem}[\cite{Tran13} Main Theorem, \cite{Man15} Theorem 1.3]\label{relboundary}
    Let $\Gamma$ be a nonelementary hyperbolic group and let $\Pi$ be a finite almost malnormal collection of quasiconvex subgroups. Then there is a $\Gamma$-equivariant continuous surjective map \[\eta:\partial_\infty(\Gamma)\to \partial_\infty (\Gamma,\Pi)~,\] such that,
    \begin{itemize}
        \item[(a)] the preimage of a conical limit point by $\eta$ is a singleton;
        \item[(b)] the preimage of the bounded parabolic point $\gamma p_i$ fixed by $\gamma \Pi_i\gamma^{-1}$ is $\gamma\partial_\infty \Pi_i \subset \partial_\infty \Gamma$ for any $i\in I$ and $\gamma\in\Gamma$.
    \end{itemize}
\end{theorem}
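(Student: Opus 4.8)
The plan is to realize the Bowditch boundary concretely as a quotient of the Gromov boundary and then to recognize that quotient abstractly via Yaman's theorem, rather than comparing Gromov models directly. First I would use quasiconvexity of each $\Pi_i$ to identify $\partial_\infty \Pi_i$ with a compact subset $\Lambda_i \subset \partial_\infty \Gamma$, namely the limit set of $\Pi_i$, which is well-defined up to the $\Gamma$-equivariant boundary identification. For quasiconvex subgroups $A, B$ of a hyperbolic group one has $\Lambda(A \cap B) = \Lambda(A) \cap \Lambda(B)$, so the almost malnormality hypothesis (which forces $\gamma \Pi_i \gamma^{-1} \cap \gamma' \Pi_j \gamma'^{-1}$ to be finite unless the cosets coincide) guarantees that the $\Gamma$-translates $\{\gamma \Lambda_i\}$ are pairwise disjoint. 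I would then define an equivalence relation $\sim$ on $\partial_\infty \Gamma$ whose only nontrivial classes are the sets $\gamma \Lambda_i$, and set $\eta : \partial_\infty \Gamma \to Z := \partial_\infty \Gamma / {\sim}$ to be the quotient map. By construction $\eta$ is continuous, surjective and $\Gamma$-equivariant.

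Second, I would verify that $Z$ is a perfect compact metrizable space. The one genuine point is Hausdorffness of the quotient, which reduces to showing that $\{\gamma \Lambda_i\}$ is a \emph{null family}: for every $\varepsilon > 0$, only finitely many translates have diameter exceeding $\varepsilon$ in a fixed visual metric on $\partial_\infty \Gamma$. This follows from quasiconvexity together with the fact that the cosets $\gamma \Pi_i$ leave every bounded region of the Cayley graph, so their shadows shrink. Granting the null condition, collapsing the closed, pairwise-disjoint sets $\gamma \Lambda_i$ yields an upper semicontinuous decomposition whose quotient is compact and metrizable.

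Third, and this is the technical heart, I would show that the induced $\Gamma$-action on $Z$ is a geometrically finite convergence action with the expected cusp data. The convergence property descends from the uniform convergence action on $\partial_\infty \Gamma$ once the null condition is in hand. For the classification of points: if $\xi \in \partial_\infty \Gamma$ lies in no peripheral limit set, then $\eta^{-1}(\eta(\xi)) = \{\xi\}$, and I would check that $\eta(\xi)$ remains a conical limit point by pushing a witnessing sequence down through $\eta$; this yields statement (a). If instead $\xi \in \gamma \Lambda_i$, then $\eta(\xi) = [\gamma \Lambda_i]$ has preimage exactly $\gamma \partial_\infty \Pi_i$, which is statement (b), provided I show $[\gamma \Lambda_i]$ is a \emph{bounded parabolic} point. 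Its stabilizer in $\Gamma$ is the setwise stabilizer of $\gamma \Lambda_i$, which by quasiconvexity and almost malnormality is exactly $\gamma \Pi_i \gamma^{-1}$, so the substantive claim is that this group acts cocompactly on $Z \setminus \{[\gamma \Lambda_i]\}$.

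I expect this cocompactness to be the main obstacle, since it is precisely the step that upgrades a quasiconvex almost malnormal subgroup to a bona fide cusp. I would establish it by working in the Cayley graph, arguing that $\Pi_i$ acts cocompactly on $\partial_\infty \Gamma \setminus \Lambda_i$ modulo the remaining collapsed translates, and that the null condition prevents these from obstructing cocompactness after passing to $Z$. With $Z$ then known to be a perfect compact metrizable space carrying a geometrically finite convergence $\Gamma$-action whose bounded parabolic points have stabilizers $\{\gamma \Pi_i \gamma^{-1}\}$, Yaman's theorem identifies $Z$ $\Gamma$-equivariantly with $\partial_\infty(\Gamma, \Pi)$; transporting $\eta$ across this homeomorphism gives the required map, and properties (a) and (b) hold by the very construction of $\sim$.
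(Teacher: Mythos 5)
The paper does not prove this statement: it is imported verbatim from the literature (Tran's Main Theorem and Manning's Theorem 1.3), so there is no in-paper proof to compare against. Your outline is, in substance, the proof strategy of those references: realize $\partial_\infty(\Gamma,\Pi)$ as the decomposition space of $\partial_\infty\Gamma$ obtained by collapsing the translates of the peripheral limit sets, check that the induced action is a geometrically finite convergence action, and invoke Yaman's theorem to identify the quotient with the Bowditch boundary. The supporting facts you lean on are all correct and standard: $\Lambda(A\cap B)=\Lambda(A)\cap\Lambda(B)$ for quasiconvex subgroups gives disjointness of distinct translates from almost malnormality, and the same identity shows $\Stab_\Gamma(\gamma\Lambda_i)=\gamma\Pi_i\gamma^{-1}$. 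You correctly isolate the two genuinely technical inputs, which your sketch asserts rather than proves: (i) the family $\{\gamma\Lambda_i\}$ is null (needed for the quotient to be Hausdorff and for the convergence property to descend), and (ii) $\Pi_i$ acts cocompactly on $\partial_\infty\Gamma\setminus\Lambda_i$, which then pushes down to bounded parabolicity of $[\gamma\Lambda_i]$ in the quotient. Both are true for quasiconvex subgroups and are exactly the lemmas carried out in the cited papers (and in Bowditch's Theorem 7.11, quoted as Theorem \ref{thm - Hyperbolic + Relatively Hyperbolic} here), so your proposal is a faithful, if compressed, reconstruction of the standard argument rather than a new route. One small point worth making explicit in statement (a): after showing that points outside all peripheral limit sets map to conical limit points with singleton preimage, you should also note that the collapsed points are bounded parabolic and hence not conical (a point of a geometrically finite convergence action cannot be both), so that every conical limit point of $Z$ does arise this way.
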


Informally, this theorem states that the Bowditch boundary of $(\Gamma,\Pi)$ is obtained from the Gromov boundary of $\Gamma$ by contracting the Gromov boundary of each conjugate of some $\Pi_i, i\in I$ to a point.

\subsection{Geodesic flows}
We now introduce a very general notion of flow space for a finitely generated group $\Gamma$, of which an important source of examples will be given by geodesic flows of Gromov models of relatively hyperbolic groups. Let $\Gamma$ be a discrete group.

\begin{definition}
A $\Gamma$-\emph{flow} is the data of a Hausdorff topological space $Y$ with a continuous flow $\phi$ and a properly discontinuous action of $\Gamma$ on $Y$ that commutes with $\phi$. This $\Gamma$-flow is called \emph{cocompact} if the quotient $\Gamma \backslash Y$ is compact. A $\Gamma$-subflow of a $\Gamma$-flow $(Y,\phi)$ is a $\Gamma$-invariant subflow of $(Y,\phi)$.
\end{definition}

 \begin{remark}
     If $\Gamma'$ is a subgroup of $\Gamma$, then every $\Gamma$-flow is automatically a $\Gamma'$-flow by restriction of the action.
 \end{remark}

We now introduce a weak notion of \emph{morphism} between $\Gamma$-flows. Importantly, while such morphisms map orbits of one flow to orbits of the other, we do not require that they preserve the time of the flow.

\begin{definition}
A \emph{morphism} $\sigma$ between $\Gamma$-flows $(Y_1,\phi_1)$ and $(Y_2,\phi_2)$ is a $\Gamma$-equivariant continuous map $\sigma:Y_1\to Y_2$ for which there exists a map $c_\sigma: Y_1\times\Rb \to \Rb$ such that 
\[\sigma(\phi_1^t(y))= \phi_2^{c_\sigma(y,t)}(\sigma(x))\]
for all $(y,t)\in Y_1 \times \Rb$. Such a morphism $\sigma$ is \emph{quasi-isometric} if there exist constants $\lambda\geqslant 1$ and $\epsilon\geqslant 0$, such that for any $y\in Y_1$, the map $c_\sigma(y,\cdot):\Rb \to \Rb$ is $(\lambda,\epsilon)$-quasi-isometric and $c(y,t)\to +\infty$ as $t\to +\infty$. A morphism $\sigma$ is an isomorphism if it is a homeomorphism. In that case, $\sigma^{-1}$ is automatically a morphism from $(Y_2,\phi_2)$ to $(Y_1,\phi_1)$.
\end{definition}

\begin{remark}
    The map $c_\sigma: Y_1\times\Rb \to \Rb$ in the above definition is $\Gamma$-invariant and satisfies the cocycle rule
    \[c_\sigma(y,t+s)=c_\sigma(y,t)+ c_\sigma(\phi_1^t(y),s)\]
    for all $y\in Y_1$ and all $s,t\in \Rb$.
\end{remark}

If $\Gamma$ acts properly discontinuously by isometries on a $\delta$-hyperbolic space $(X,d_X)$, a direct way to construct a $\Gamma$-flow is to consider the collection of parametrized geodesics of $X$. Define \[\mathcal{G}(X)=\{\ell\ |\ \ell:\Rb \to X\text{ geodesic}\}~,\] equipped with the flow $\psi$ defined by \[\psi^t(\ell): s \mapsto \ell(s + t)~.\] Then $(\mathcal G(X), \psi)$ is a $\Gamma$-flow. One can define the metric $d'$ on $\mathcal{G}(X)$ by \[d'(\ell,\ell') = \int_{\Rb} \frac{e^{-\abs{t}}}{2}d_X\big(\ell(s),\ell'(s)\big)ds~,\] for any $\ell,\ell'\in \mathcal{G}(X)$ and verify that the projection $\pi':\mathcal{G}(X)\to X$ defined by $\pi'(\ell)=\ell(0)$ is a $\Isom(X)$-equivariant quasi-isometry.\\

The drawback of this construction is that quasi-isometries between hyperbolic spaces do not induce morphisms of flows. For our purposes, we will need a more ``canonical'' notion of geodesic flow, typically one for which pairs of distinct points in the boundary at infinity define a unique orbit of the flow. Such a flow is provided to us by a general theorem of Mineyev. We denote by $A^{(2)}$ the set of ordered pairs of distinct points in a set $A$.

\begin{theorem}[\cite{Mineyev} Theorem 60]\label{mineyevflow}
Let $(X,d_X)$ be a taut hyperbolic metric space and $\Gamma$ a discrete subgroup of $\Isom(X)$. Then there exists a metric $d$ on the topological space $\mathcal{F}(X)=\partial_{\infty}^{(2)}X \times \Rb$ and a continuous cocycle $c:\Gamma \times \partial_\infty X^{(2)} \to \Rb$ with the following properties:
\begin{itemize}
    \item[(1).] The action of $\Gamma$ on $\mathcal F(X)$ given by
    \[\gamma (z_-,z_+,t) = (\gamma  z_-, \gamma z_+, t+ c(z_-,z_+,\gamma))\]
    is properly discontinuous and isometric;
    
    \item[(2).] There is a $\Gamma$-equivariant quasi-isometry $\pi:(\mathcal{F}(X),d)\to (X,d_X)$;

    \item[(3).] There exist constants $\lambda\geqslant 1$ and $\epsilon\geqslant 0$ such that for any $(z_-,z_+) \in \partial_\infty X^{(2)}$, the curve $\{\pi(z_-,z_+,t), t\in \Rb\}$ is a $(\lambda,\epsilon)$-quasi-geodesic in $X$ with backward endpoint $z_-$ and forward endpoint $z_+$.
\end{itemize}
\end{theorem}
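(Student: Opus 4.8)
The plan is to construct Mineyev's flow space $\mathcal{F}(X)$ explicitly enough to verify all three properties simultaneously, using Mineyev's earlier work on the symmetric join and the homological bicombing of a hyperbolic group. The key idea is that for a taut hyperbolic space one first builds a $\Gamma$-equivariant family of ``geodesic-like'' curves joining every ordered pair of boundary points, and then parametrizes the total space of this family by arclength to obtain the product $\partial_\infty^{(2)}X \times \Rb$. First I would recall from \cite{Mineyev} the construction of the homological/quasi-geodesic bicombing $\overline{q}_{z_-,z_+}$, which assigns to each pair $(z_-,z_+)\in\partial_\infty X^{(2)}$ a $(\lambda,\epsilon)$-quasi-geodesic with the prescribed endpoints; tautness is exactly what guarantees that such curves exist through (a bounded neighborhood of) every point and stay within the space. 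This immediately gives a candidate for the map $\pi$ in property (2) and the quasi-geodesic curves of property (3), since one defines $\pi(z_-,z_+,t)$ to be the point at parameter $t$ along the chosen curve.

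The heart of the matter is property (1): one must produce a genuine metric $d$ on $\mathcal{F}(X)$ together with a cocycle $c:\Gamma\times\partial_\infty X^{(2)}\to\Rb$ so that the $\Gamma$-action
\[
\gamma(z_-,z_+,t)=(\gamma z_-,\gamma z_+,t+c(z_-,z_+,\gamma))
\]
is properly discontinuous and isometric. The cocycle $c$ records the ``time shift'' needed to reconcile the $\Gamma$-action on boundary pairs with the arclength parametrization: when $\gamma$ moves the curve $\overline{q}_{z_-,z_+}$ to the curve $\overline{q}_{\gamma z_-,\gamma z_+}$, the two arclength parametrizations differ by a translation, and $c(z_-,z_+,\gamma)$ is precisely that translation amount. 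The cocycle rule $c(z_-,z_+,\gamma_1\gamma_2)=c(\gamma_2 z_-,\gamma_2 z_+,\gamma_1)+c(z_-,z_+,\gamma_2)$ follows formally from composing these reparametrizations, and its continuity is inherited from the continuity of Mineyev's bicombing in the endpoints. To make the action isometric, I would define $d$ not from the naive arclength but via an averaging/symmetrization procedure (Mineyev's symmetric join construction), smoothing the metric along each flow line and across nearby flow lines so that the formula for $\gamma$ above is a literal isometry; the main technical work is checking that the symmetrized metric is genuinely a metric (triangle inequality) and that it is both $\Gamma$-invariant and quasi-isometric to $d_X$ via $\pi$.

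The main obstacle I expect is the simultaneous reconciliation of three incompatible demands: an \emph{exact} isometric $\Gamma$-action (property (1)), an arclength-type parametrization that turns each boundary pair into a single flow orbit, and only a \emph{quasi}-isometric relation to the original metric $d_X$ (properties (2) and (3)). A naive arclength parametrization of quasi-geodesics is not $\Gamma$-equivariant on the nose, because quasi-geodesics for distinct pairs are only coarsely well-defined and the group does not act by isometries on the space of such curves. Mineyev's resolution is to replace the crude quasi-geodesic by a canonical representative obtained through convolution against a probability measure (the homological bicombing upgraded to a metric bicombing), which absorbs the coarse ambiguity into a genuine equivariant object; the delicate estimates showing that this convolution stays a bounded Hausdorff distance from the original geodesics—so that properties (2) and (3) survive—are where the hyperbolicity constant $\delta$ and the tautness hypothesis are used most heavily. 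Since this is precisely the content of \cite[Theorem 60]{Mineyev}, I would structure the proof as a careful translation of that theorem into the present notation, verifying that Mineyev's hypotheses (tautness, properness, a discrete isometric $\Gamma$-action) match ours and that his conclusions yield exactly the metric $d$, cocycle $c$, and quasi-isometry $\pi$ asserted here.
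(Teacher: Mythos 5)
The paper offers no proof of this statement: it is imported verbatim as a citation of \cite{Mineyev}, Theorem 60, and your proposal likewise ends by deferring every substantive technical point (the triangle inequality for the symmetrized metric, equivariance and continuity of the cocycle, the quasi-isometry estimates) to that same reference. Your outline of Mineyev's strategy --- symmetric join, canonical quasi-geodesic bicombing, arclength parametrization with a reparametrization cocycle satisfying $c(z,\gamma_1\gamma_2)=c(z,\gamma_2)+c(\gamma_2 z,\gamma_1)$ --- is a faithful summary of the cited construction and consistent with how the paper uses the theorem, so there is nothing further to compare.
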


Note that the definition of the action of $\Gamma$ on $\mathcal F(X)$ implies that it commutes with the flow $\phi$ defined by \[\phi^t(z_-,z_+,s) = (z_-,z_+,t+s)~.\] Hence $(\mathcal F(X),\phi)$ is a $\Gamma$-flow.

When $\Gamma$ is a hyperbolic group and $X$ is a Gromov model of $\Gamma$ (e.g. its Cayley graph with respect to a finite generating set), we define \emph{geodesic flow of $\Gamma$} to be the space $\mathcal F(\Gamma) \equaldef \mathcal F(X)$ equipped with the flow $\phi$ and the action of $\Gamma$. This $\Gamma$-flow is well-defined up to quasi-isometric isomorphisms.

When $X$ is a Gromov model of a relatively hyperbolic pair $(\Gamma,\Pi)$ (e.g. its \emph{Groves--Manning cusp space} as defined in \cite{GM08}),  we call \emph{geodesic flow of $(\Gamma,\Pi)$} the space $\mathcal{F}(\Gamma,\Pi) \equaldef \mathcal{F}(X)$ equipped with the flow $\phi$ and the action of $\Gamma$. Now, it is only well-defined up to isomorphisms of $\Gamma$-flow, since different Gromov models of $(\Gamma,\Pi)$ may not be quasi-isometric to each other. The next proposition is independent to the choice of the Gromov models for defining $\mathcal{F}(\Gamma,\Pi)$.

\begin{proposition}\label{relofsubflows}
Let $\Gamma$ be a hyperbolic group and $\Pi$ a finite almost malnormal collection of quasi-convex subgroups. Then there is a morphism of $\Gamma$-flows
\[\sigma: \mathcal{F}(\Gamma) \setminus\bigcup_{i \in I,\gamma \in \Gamma} \gamma \mathcal  F(\Pi_i) \longrightarrow \mathcal{F}(\Gamma,\Pi)\] that extends the quotient map $\eta:\partial_\infty\Gamma \to \partial_\infty(\Gamma,\Pi)$ from Theorem \ref{relboundary}. 
Moreover, the restriction of $\sigma$ to any cocompact $\Gamma$-subflow is a quasi-isometric morphism.
\end{proposition}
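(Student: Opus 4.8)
The plan is to write $\sigma(z_-,z_+,t) = \big(\eta(z_-),\eta(z_+),\tau(z_-,z_+,t)\big)$, so that the whole content of the statement lies in constructing the time reparametrisation $\tau$. First I would record that the complement of the removed orbits is precisely the set of $(z_-,z_+,t)$ with $\eta(z_-)\neq\eta(z_+)$: by Theorem \ref{relboundary}(b), two distinct points of $\partial_\infty\Gamma$ have the same image under $\eta$ exactly when they lie in a common $\gamma\partial_\infty\Pi_i$, which is exactly the condition that the geodesic $(z_-,z_+)$ belongs to some $\gamma\mathcal F(\Pi_i)$. Hence $\eta\times\eta$ maps the base of the domain into $\partial_\infty^{(2)}(\Gamma,\Pi)$ continuously and $\Gamma$-equivariantly, and it remains only to lift this to the $\Rb$-coordinate; that $\sigma$ then ``extends $\eta$'' is built in, since $\sigma$ covers $\eta\times\eta$ on boundary pairs.

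For the reparametrisation, fix Gromov models $X$ of $\Gamma$ (a Cayley graph) and $X'$ of $(\Gamma,\Pi)$ (the Groves--Manning cusped space \cite{GM08}), together with the $\Gamma$-equivariant $1$-Lipschitz inclusion $\iota\colon X\hookrightarrow X'$ of the Cayley graph as the thick part. Given $(z_-,z_+)$ with $\eta(z_-)\neq\eta(z_+)$, Theorem \ref{mineyevflow}(3) supplies two uniform quasi-geodesics: $g\colon t\mapsto\pi(z_-,z_+,t)$ in $X$ from $z_-$ to $z_+$, and $g'\colon s\mapsto\pi'(\eta(z_-),\eta(z_+),s)$ in $X'$ from $\eta(z_-)$ to $\eta(z_+)$. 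Outside the horoballs these fellow-travel (a standard consequence of relative hyperbolicity and the Morse lemma), while $g$ runs alongside a peripheral coset $\gamma\Pi_i$ precisely where $g'$ takes a shortcut into the corresponding horoball. I would then define $\tau(z_-,z_+,t)$ by projecting $\iota(g(t))$ onto $g'$, i.e. by matching positions along the fellow-travelling portions and interpolating across the horoball excursions. Since $\iota,\pi,\pi'$ and $\eta$ are all $\Gamma$-equivariant, the resulting $\sigma$ is equivariant by construction -- equivalently, $\tau$ intertwines the two Mineyev cocycles -- and the cocycle relation makes $\sigma$ a morphism of $\Gamma$-flows. The coarseness of closest-point projection would be handled by a standard equivariant smoothing, which also yields continuity.

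The main work, and the main obstacle, is the final assertion that $\sigma$ restricts to a \emph{quasi-isometric} morphism on a cocompact $\Gamma$-subflow $\mathcal Z$. Here $c_\sigma(y,\cdot)$ can fail to be quasi-isometric only if $\tau$ ``stalls'' for arbitrarily long source-time, which happens exactly when $g$ fellow-travels a single peripheral coset for arbitrarily long. I would rule this out by compactness: if there were $y_n\in\mathcal Z$ and source-windows of length $t_n\to+\infty$ over which $\tau$ advances boundedly, translate each window by some $\gamma_n\in\Gamma$ into a fixed compact subset meeting every $\Gamma$-orbit and extract a limit $y_\infty\in\mathcal Z$, using that $\mathcal Z$ is closed and cocompact (positioning the windows so that the excursion straddles the basepoint, so that in the limit it extends to both $t\to\pm\infty$). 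The limiting geodesic would then fellow-travel a fixed coset $\gamma\Pi_i$ for all time, forcing both its endpoints into the limit set $\gamma\partial_\infty\Pi_i$ by quasiconvexity; thus $y_\infty\in\gamma\mathcal F(\Pi_i)$, contradicting $\mathcal Z\subset\mathcal F(\Gamma)\setminus\bigcup_{i,\gamma}\gamma\mathcal F(\Pi_i)$. Consequently there is a uniform $T_*$ with $c_\sigma(y,T_*)\geqslant 1$ for all $y\in\mathcal Z$, while cocompactness and the uniform quasi-geodesic constants bound $c_\sigma(y,T_*)$ from above; the cocycle rule then upgrades these to a uniform $(\lambda,\epsilon)$-quasi-isometry of each $c_\sigma(y,\cdot)$, with $c_\sigma(y,t)\to+\infty$. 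I expect the delicate points to be exactly this compactness step and the promotion of a coarsely-defined projection to a genuinely continuous $\tau$.
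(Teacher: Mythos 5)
Your overall strategy matches the paper's: identify the domain as the set of orbits $(z_-,z_+,\cdot)$ with $\eta(z_-)\neq\eta(z_+)$, build $\sigma$ over $\eta\times\eta$ by supplying an $\Rb$-coordinate, and prove the quasi-isometric property on a cocompact subflow by a compactness-and-contradiction argument whose punchline is that a limiting orbit would have both endpoints in some $\gamma\partial_\infty\Pi_i$. Where you genuinely diverge is in the construction of the time coordinate $\tau$. The paper does this softly: it forms the space $U$ of compatible pairs in $\mathcal F(\Gamma)\times\mathcal F(\Gamma,\Pi)$, observes that projection to the first factor descends to a fiber bundle over the quotient with fiber $\Rb$, takes a continuous section, and lifts; flow-equivariance is then automatic because any map of the form $(z_-,z_+,s)\mapsto(\eta z_-,\eta z_+,\tau(z_-,z_+,s))$ intertwines the two shift flows. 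Your route via the inclusion of the Cayley graph into the Groves--Manning space, fellow-travelling, and closest-point projection is more geometric and would give a more explicit $\sigma$, but it forces you to confront a real issue the paper's argument sidesteps entirely: closest-point projection in a hyperbolic space is only coarsely well-defined, and ``a standard equivariant smoothing'' that is simultaneously continuous (jointly in $(z_-,z_+,t)$), $\Gamma$-equivariant, and compatible with the flow is exactly the content of a section of the bundle $p_1$ -- so you are implicitly reproving the existence of that section by harder means. As written, that step is asserted rather than carried out, and it is the one place where your argument is not yet complete.

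For the quasi-isometric part, your compactness argument is sound and reaches the same contradiction as the paper, just dressed differently: you detect the stall geometrically (long fellow-travelling with a peripheral coset, then quasiconvexity forces both endpoints of the limit into $\gamma\partial_\infty\Pi_i$), whereas the paper extracts a sequence $(\gamma_n)$ returning the far endpoint of the window to a fixed compact set and plays the attracting point of $(\gamma_n)$ in $\partial_\infty\Gamma$ (which is $z_+$) against its attracting point in $\partial_\infty(\Gamma,\Pi)$ (which the failure of domination forces to be $\eta(z_-)$), concluding $\eta(z_+)=\eta(z_-)$ by equivariance. The paper's version has the advantage of working for an arbitrary continuous section, independent of any geometric description of where $\tau$ stalls; yours is tied to your explicit construction of $\tau$. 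Both are valid, but if you keep your construction you must also justify the claim that bounded advance of $\tau$ over a long window forces long fellow-travelling with a single peripheral coset, which again leans on the unproved properties of your smoothed projection.
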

\begin{proof}
    Let $X$ be any Gromov model of the relatively hyperbolic pair $(\Gamma,\Pi)$. We fix $\mathcal{F}(\Gamma,\Pi)$ to be the $\Gamma$-flow $\mathcal{F}(X)$.
    Consider the space 
    \[U = \Big\{\Big((z_-,z_+,s_1),(\eta z_-,\eta z_+,s_2)\Big): z_-,z_+\in \partial_\infty \Gamma, s_1, s_2 \in \Rb, \eta z_- \neq \eta z_+ \Big \} \subset \mathcal{F}(\Gamma)\times \mathcal{F}(\Gamma,\Pi)\]
    There is a natural $\Gamma$-action on $U$ by coordinates since $\eta$ is $\Gamma$-equivariant. Note that the projection to the first coordinate maps $U$ to $\mathcal{F}(\Gamma) \setminus\bigcup_{i \in I,\gamma \in \Gamma} \gamma \mathcal{F}(\Pi_i)$ by Theorem \ref{relboundary}. This projection factors to a fiber bundle  \[p_1: \Gamma\backslash U \to \Gamma\backslash\Big(\mathcal{F}(\Gamma) \setminus\bigcup_{i \in I,\gamma \in \Gamma} \gamma \mathcal{F}(\Pi_i)\Big) \] with fibers homeomorphic to $\Rb$.
    Let $\hat{\sigma}$ be a continuous section of $p_1$. Lifting $\hat{\sigma}$ to $\mathcal{F}(\Gamma) \setminus\bigcup_{i \in I,\gamma \in \Gamma} \gamma \mathcal{F}(\Pi_i)$ and composing with the projection to the second coordinate, we obtain a morphism of $\Gamma$-flows
    \[\sigma: \mathcal{F}(\Gamma) \setminus\bigcup_{i \in I,\gamma \in \Gamma} \gamma \mathcal{F}(\Pi_i) \longrightarrow \mathcal{F}(X(\Gamma,\Pi))\] 
    which extends the boundary map $\eta$.
    
    It remains to prove that $\sigma$ is quasi-isometric in restriction to any cocompact $\Gamma$-subflow. Let us thus introduce the cocyle
    \[c: \Big(\mathcal{F}(\Gamma) \setminus\bigcup_{i \in I,\gamma \in \Gamma} \gamma \mathcal{F}(\Pi_i)\Big)\times \Rb\longrightarrow \Rb\] such that \[\sigma(\phi^t(z))=\psi^{c(z,t)}(\sigma(z))\] for any $z\in \mathcal{F}(\Gamma) \setminus\bigcup_{i \in I,\gamma \in \Gamma} \gamma \mathcal{F}(\Pi_i) $ and $t\in\Rb$.
    
    Let $\widetilde{K}$ be a cocompact $\Gamma$-subflow of $\mathcal{F}(\Gamma) \setminus\bigcup_{i \in I,\gamma \in \Gamma} \gamma \mathcal{F}(\Pi_i)$. Since $c$ satisfies the cocycle rule
    \[c(x,t+s) = c(x,t) + c(\phi^t(x),s)~,\]
    in order to prove that it is quasi-isometric, it suffices to verify the following statement.
    
    \begin{claim}
        There exist constants $T,m,M>0$, such that $m\leqslant c(z,T) \leqslant M$ for any $z\in\widetilde{K}$.
    \end{claim}
    
    Let $K$ be a compact subset of $\mathcal{F}(\Gamma) \setminus\bigcup_{i \in I,\gamma \in \Gamma} \gamma \mathcal{F}(\Pi_i)$ such that $\Gamma K = \widetilde{K}$. Since $c$ is continuous and $\Gamma$-invariant, we only need to show that there exists a constant $T>0$ such that $c(z,T)>0$ for any $z\in K$.
    
    We argue by contradiction. Suppose that there is a sequence $(z^n)_{n\in \Nb}\subset K$ and a sequence $(T_n)_{n\in \Nb}\subset \Rb_+$ with $T_n\to +\infty$ as $n\to +\infty$, such that $c(z^n,T_n)\leqslant 0$.

    Up to a subsequence, we may assume $z^n=(z^n_-,z^n_+,s_n)$ converges to $z=(z_-,z_+,s) \in K$ as $n\to +\infty$. For each $n$, there exists $\gamma_n\in \Gamma$ such that $\gamma_n^{-1}\phi^{T_n}(z^n)\in K$ by $\Gamma K=\widetilde{K}$. Then for a base point $z^0\in K$, we have $\gamma_n z^0 \to z_+$ as $n\to +\infty$ since $d(\gamma_n z^0,\phi^{T_n}(z^n)) = d( z^0,\gamma_n^{-1}\phi^{T_n}(z^n))$ is bounded by the diameter of $K$ and $\phi^{T_n}(z^n)\to z_+$ as $n\to +\infty$. Therefore, the sequence $(\gamma_n)_{n\in \Nb}$ has attracting point $z_+$ for the convergence group action of $\Gamma$ on $\partial_\infty\Gamma$.

    We apply the argument above by replacing $z^n$ by $\sigma(z^n)$ in the compact set $\sigma(K)$, and replacing $T_n$ by $c(z^n,T_n)$. Since $c(z^n,T_n)\leqslant 0$, after further extraction, either $\gamma_n \sigma(z^0)$ is bounded in $\mathcal{F}(X(\Gamma,\Pi))$ or $\gamma_n \sigma(z^0)\to \eta(z_-)$ as $n\to +\infty$, depending on whether $c(z^n,T_n)$ is bounded or diverges to $-\infty$. Since $\gamma_n$ is unbounded and $\Gamma$ acts properly on $\mathcal F(X)$, we conclude that $\gamma_n \sigma(z^0)\to \eta(z_-)$ as $n\to +\infty$ and $\eta(z_-)$ is the attracting point of $(\gamma_n)$ in $\partial_\infty(\Gamma,\Pi)$.

    On the other hand, since $\eta$ is $\Gamma$-equivariant, it must send the attracting point of $\gamma_n$ in $\partial_\infty \Gamma$ to its attracting point in $\partial_\infty (\Gamma, \Pi)$. We deduce that $\eta(z_+) = \eta(z_-)$. By Proposition \ref{relboundary}, this implies that $z_-$ and $z_+$ both belong to $\gamma \partial_\infty P_i$ for some $i\in I$ and some $\gamma \in \Gamma$, contradicting the assumption that \[z\in K \subset \mathcal{F}(\Gamma) \setminus\bigcup_{i \in I,\gamma \in \Gamma} \gamma \mathcal{F}(\Pi_i)~.\]
\end{proof}

\section{Restricted Anosov and relatively Anosov representations}\label{RestrictedAnosovSection}

We always fix the field $\Kb$ to be $\Rb$ or $\Cb$.

The notion of \emph{Anosov representation} of a hyperbolic group $\Gamma$ admits many equivalent definitions. One of them, which is close to Labourie's original definition and was developed by Bochi--Potrie--Sambarino \cite{BPS}, states that a linear representation $\rho:\Gamma \to \SL(d,\Kb)$ is $k$-Anosov if the associated flat bundle admits a dominated splitting over the geodesic flow of $\Gamma$. The purpose of this section is to investigate the generalization of this definition when the geodesic flow is replaced by any $\Gamma$-flow.

\subsection{Linear representations and dominated splittings}
Let $\Gamma$ be a countable group and let $(Y,\phi)$ be a $\Gamma$-flow. For a representation $\rho:\Gamma\to \SL(d,\Kb)$, we consider the flat bundle $E_\rho(Y) = \Gamma\backslash (Y\times \Kb^d)$, where the $\Gamma$-action is given by $\gamma(y,v)=(\gamma y, \rho(\gamma)v)$ for all $\gamma\in\Gamma$, $y\in Y$ and $v\in \Kb^d$. The flow $\phi$ on $Y$ lifts to a flow on $Y\times \Kb^d$ by parallel transformations, which we still denote by $\phi$, namely, \[\phi^t(y,v) = (\phi^t(y),\phi^t_y(v)) = (\phi^t(y), v)\] for all $y\in Y$, $v\in\Kb^d$ and $t\in\Rb$. This flow commutes with the $\Gamma$-action and thus factors to a flow on $E_\rho(Y)$, which we again denote by $\phi$.

\begin{definition}\label{restAnosov}
    A representation $\rho$ of $\Gamma$ into $\SL(d,\Kb)$ is \emph{$k$-Anosov in restriction to the $\Gamma$-flow $(Y,\phi)$} if there exists a metric $\Vert\cdot\Vert$ on the vector bundle $E_\rho(Y)$ such that $E_{\rho}(Y)$ admits a \emph{dominated splitting of rank $k$}, that is, a continuous $\phi$-invariant decomposition $E_\rho(Y) = E_{\rho}^s \oplus E_{\rho}^u$ with $E_{\rho}^s$ of rank $k$, for which there exist constants $C,\lambda >0$, such that \[ \dfrac{\Vert \phi^t_y(v)\Vert}{\Vert \phi^t_y(w)\Vert} \leqslant C e^{-\lambda t}\dfrac{\Vert v\Vert}{\Vert w\Vert}~,\] for all $y\in \Gamma\backslash Y$, $t\in \Rb_+$ and all non zero vectors $v \in (E^s_\rho)_y$and $w\in (E^u_\rho)_y$. We respectively call $E^s_\rho$ and $E^u_\rho$ the \emph{stable direction} and \emph{unstable direction} of $E_\rho(Y)$ with respect to the metric $\Vert \cdot \Vert$.
\end{definition}

\begin{remark}\label{cptnormind}
 When the $\Gamma$-flow $(Y,\phi)$ is cocompact, the dominated splitting is unique and does not depend on the choice of the metric, since any two metrics on $E_\rho(Y)$ are uniformly bi-Lipschitz.
\end{remark}

\begin{remark}\label{abuseliftnotations}
    By abuse of notations, we will write $Y\times \Kb^d = E^s_\rho \oplus E^u_\rho$ to represent the lift of the dominated splitting over $Y$, as a decomposition of $Y\times \Kb^d$ into $\Gamma$-invariant, $\phi$-invariant subbundles. The metric $\Vert \cdot \Vert$ will lift to a $\Gamma$-invariant one, still denoted by $\Vert \cdot \Vert$. We call it a dominated splitting of $Y\times \Kb^d$ of rank $k$ associated to $\rho$ and $\Vert\cdot\Vert$.
\end{remark}

\begin{remark}\label{rescalingnorm}
    Let $\Vert\cdot\Vert_0$ denote the standard metric on $\Kb^d$. Since $Y\times \Kb^d$ is a trivial bundle, for any metric $\Vert \cdot \Vert$, there exists a continuous map $A: Y\to \GL(d,\Kb)$, such that at any point $y\in Y$, the metric $\Vert \cdot \Vert$ is expressed by $\Vert A(y)\cdot\Vert_0$. We will say that $\Vert \cdot \Vert$ is of unit volume if there exists such a map $A$ takes values in $\SL(d,\Kb)$. Since a rescaling of the metric preserves the ratio of the norms of two vectors, we may assume without loss of generality that the metric in Definition \ref{restAnosov} is always of unit volume.
\end{remark}

One of the good properties of the restricted Anosov definition is that it is preserved under pull-back by quasi-isometric morphisms of $\Gamma$-flows.

\begin{proposition}\label{anosovthroughmorphism}
    Let $\sigma: (Y_2, \phi_2) \to (Y_1, \phi_1)$ be a quasi-isometric morphism of $\Gamma$-flows, and let $\rho: \Gamma\to \SL(d,\Kb)$ be a $k$-Anosov representation in restriction to $(Y_1,\phi_1)$. Then $\rho$ is $k$-Anosov in restriction to $(Y_2,\phi_2)$.
\end{proposition}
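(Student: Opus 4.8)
The plan is to pull back the dominated splitting and the metric from $(Y_1,\phi_1)$ to $(Y_2,\phi_2)$ along $\sigma$, exploiting the fact that, in the notation of Definition~\ref{restAnosov}, the lifted flow on $Y_i\times\Kb^d$ acts \emph{trivially} on the fibre (by $\phi^t_y(v)=v$), so that all the dynamics is encoded in how the metric varies along flow lines. Concretely, write $\Vert v\Vert_{y'}$ for the norm at the base point $y'\in Y_1$ of the lifted, $\Gamma$-invariant metric realizing the splitting $Y_1\times\Kb^d=E^s_\rho\oplus E^u_\rho$, and define on $Y_2\times\Kb^d$ the pulled-back data
\[\Vert v\Vert'_y\ :=\ \Vert v\Vert_{\sigma(y)}~,\qquad (E^s)_y\ :=\ (E^s_\rho)_{\sigma(y)}~,\qquad (E^u)_y\ :=\ (E^u_\rho)_{\sigma(y)}~.\]
Since $\sigma$ is continuous and $\Gamma$-equivariant while the original data is continuous and $\Gamma$-invariant, $\Vert\cdot\Vert'$ is a continuous $\Gamma$-invariant metric and $Y_2\times\Kb^d=(E^s)\oplus(E^u)$ is a continuous $\Gamma$-invariant splitting with $(E^s)$ of rank $k$; all of this descends to $E_\rho(Y_2)$.

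First I would check $\phi_2$-invariance of the pulled-back splitting. Using the defining relation $\sigma(\phi_2^t(y))=\phi_1^{c_\sigma(y,t)}(\sigma(y))$ and the fact that the subspaces $(E^s_\rho)_{y'},(E^u_\rho)_{y'}\subset\Kb^d$ are constant along $\phi_1$-orbits (this is what $\phi_1$-invariance means once the fibrewise flow is trivial), one gets $(E^s)_{\phi_2^t(y)}=(E^s_\rho)_{\sigma(\phi_2^t(y))}=(E^s_\rho)_{\phi_1^{c_\sigma(y,t)}(\sigma(y))}=(E^s_\rho)_{\sigma(y)}=(E^s)_y$, and likewise for $(E^u)$; hence the splitting is $\phi_2$-invariant.

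The core of the argument is the domination estimate. For $v\in(E^s)_y$, $w\in(E^u)_y$ and $t\ge0$, the trivial fibrewise action gives the identity
\[\frac{\Vert \phi^t_y(v)\Vert'}{\Vert \phi^t_y(w)\Vert'}\;=\;\frac{\Vert v\Vert'_{\phi_2^t(y)}}{\Vert w\Vert'_{\phi_2^t(y)}}\;=\;\frac{\Vert v\Vert_{\phi_1^{c_\sigma(y,t)}(\sigma(y))}}{\Vert w\Vert_{\phi_1^{c_\sigma(y,t)}(\sigma(y))}}~,\]
so the problem reduces to applying the domination over $Y_1$ at the base point $\sigma(y)$ with reparametrized time $s:=c_\sigma(y,t)$. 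The delicate point is that $c_\sigma(y,\cdot)$ is only a $(\lambda_\sigma,\epsilon)$-quasi-isometry (with $\lambda_\sigma\ge1,\epsilon\ge0$ the quasi-isometry constants of $\sigma$), so I must control its sign and its growth. Since $c_\sigma(y,0)=0$ by the cocycle rule, the lower quasi-isometry bound gives $|c_\sigma(y,t)|\ge t/\lambda_\sigma-\epsilon$; combined with continuity of $c_\sigma(y,\cdot)$ and $c_\sigma(y,t)\to+\infty$, this forces $c_\sigma(y,t)\ge t/\lambda_\sigma-\epsilon>0$ for all $t>T_0:=\lambda_\sigma\epsilon$, uniformly in $y$. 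For such $t$ the forward domination over $Y_1$ applies and yields
\[\frac{\Vert \phi^t_y(v)\Vert'}{\Vert \phi^t_y(w)\Vert'}\;\le\;C\,e^{-\lambda s}\,\frac{\Vert v\Vert'_y}{\Vert w\Vert'_y}\;\le\;C\,e^{\lambda\epsilon}\,e^{-(\lambda/\lambda_\sigma)\,t}\,\frac{\Vert v\Vert'_y}{\Vert w\Vert'_y}~,\]
which is exactly domination of rate $\lambda/\lambda_\sigma$.

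The remaining, and main technical, obstacle is the bounded range $0\le t\le T_0$, where $s=c_\sigma(y,t)$ lies in a fixed compact interval but may be negative, so that the forward domination over $Y_1$ does not directly apply. Here I would show that the ratio in the displayed identity is bounded above by a constant independent of $y$ and of $t\in[0,T_0]$, and absorb this bound into the multiplicative constant. This is a uniform continuity statement: the map $(y,t,[v],[w])\mapsto\big(\Vert v\Vert'_{\phi_2^t(y)}\,\Vert w\Vert'_y\big)/\big(\Vert w\Vert'_{\phi_2^t(y)}\,\Vert v\Vert'_y\big)$ is continuous and $\Gamma$-invariant on a space whose projective fibres are compact, so when the flow is cocompact it is automatically bounded, and in general it follows from uniform continuity of the metric along flow lines. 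Granting this, the two regimes combine to give constants $C',\lambda'>0$ realizing the inequality of Definition~\ref{restAnosov} for all $t\ge0$, so $(E^s)\oplus(E^u)$ is a dominated splitting of rank $k$ and $\rho$ is $k$-Anosov in restriction to $(Y_2,\phi_2)$. I expect this final uniformity over the bounded time window to be the only genuinely delicate step; everything else is a direct transport of structure along $\sigma$.
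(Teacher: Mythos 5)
Your proposal is correct and follows essentially the same route as the paper: pull back the splitting and the metric along $\sigma$ and convert the domination estimate through the time reparametrization $c_\sigma$, using the lower bound $c_\sigma(y,t)\geq at-b$ supplied by the quasi-isometry hypothesis. The one place where you are more careful than the paper is the window $t\in[0,T_0]$ on which $c_\sigma(y,t)$ may be negative: the paper simply applies the forward domination inequality at time $c(y,t)$ without comment, whereas you absorb this window into the multiplicative constant; your compactness argument settles it for cocompact flows (the only case used later in the paper), though the appeal to ``uniform continuity of the metric along flow lines'' in the general case would need to be made precise.
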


\begin{proof}
Let $c: Y\times\Rb \to \Rb$ be the cocycle such that  \[\sigma(\phi_2^t(y)) = \phi_1^{c(y,t)}(\sigma(y))~.\] By definition of a quasi-isometric morphism, there exist constants $a>0,b>0$ such that \[c(y,t) \geq a t -b\] for all $y\in Y$ and all $t\geq 0$.

The morphism $\sigma$ naturally lifts to a continuous bundle morphism $\sigma: E_\rho(Y_1) \to E_\rho(Y_2)$. Let $\Vert \cdot \Vert$ be a continuous metric for which $E_\rho(Y_1)$ has a dominated splitting of rank $k$ \[E_\rho(Y_1) = E_{\rho}^s(Y_1) \oplus E_{\rho}^u(Y_1)~.\]

Pulling back this splitting by $\sigma$, we get a continuous $\phi_1$-invariant splitting
\[E_\rho(Y_2) = E_{\rho}^s(Y_2) \oplus E_{\rho}^u(Y_2)~.\]
Let us still denote by $\Vert \cdot \Vert$ the pull-back by $\sigma$ of the metric on $E_\rho(Y_1)$. With these choices, we have for all $y\in \Gamma \setminus Y_2$ and all $v\in E_{\rho}^s(Y_2)$, $w\in E_{\rho}^u(Y_2)$:
\begin{eqnarray*}
\frac{\Vert \phi_2^t(v)\Vert}{\Vert \phi_2^t(w)\Vert}&=& \frac{\Vert \sigma(\phi_2^t(v))\Vert}{\Vert \sigma(\phi_2^t(w))\Vert} = \frac{\Vert \phi_1^{c(y,t)}(\sigma (v))\Vert}{\Vert \phi_1^{c(y,t)}(\sigma(w))\Vert}\\
&\leqslant & C e^{-\lambda c(y,t)} \frac{\Vert \sigma(v)\Vert}{\Vert \sigma(w)\Vert} \\ &\leqslant & C e^b e^{-a\lambda t} \frac{\Vert v\Vert}{\Vert w\Vert}~,
\end{eqnarray*}
showing that the splitting $E_{\rho}^s(Y_2) \oplus E_{\rho}^u(Y_2)$ is dominated.
\end{proof}

Another good property of the notion of restricted Anosov representation is its stability under passing to a subgroup. Let $\Gamma$ be a countable group, $(Y,\phi)$ a $\Gamma$-flow and $\Gamma'$ a subgroup of $\Gamma$. Then $(Y,\phi)$ can be seen as a $\Gamma'$-flow by resctricting the $\Gamma$-action to $\Gamma'$.

\begin{proposition}
\label{subgptogp}
    Let $\rho: \Gamma \to \SL(n,\Kb)$ be a linear representation. 
    \begin{itemize}
    \item[(1)]If $\rho$ is $k$-Anosov in restriction to $(Y,\phi)$. Then $\rho|_{\Gamma'}$ is a $k$-Anosov in restriction to $(Y,\phi)$.
    \item[(2)] Conversely, assume that $(Y,\phi)$ is cocompact and $\Gamma'$ has finite index in $\Gamma$. If $\rho|_{\Gamma'}$ is $k$-Anosov in restriction to $(Y,\phi)$, then $\rho$ is $k$-Anosov in restriction to $(Y,\phi)$. 
    \end{itemize}
\end{proposition}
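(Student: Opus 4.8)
For part (1), the plan is simply to observe that being $k$-Anosov in restriction to $(Y,\phi)$ is, after lifting to $Y\times\Kb^d$ as in Remark \ref{abuseliftnotations}, a \emph{pointwise} condition on $Y$: it asserts the existence of a continuous, $\Gamma$-invariant, $\phi$-invariant splitting $Y\times\Kb^d=E^s\oplus E^u$ together with a $\Gamma$-invariant metric $\Vert\cdot\Vert$ satisfying the domination inequality for every $y\in Y$. A $\Gamma$-invariant object is a fortiori $\Gamma'$-invariant, so the very same splitting, metric, and constants $C,\lambda$ witness that $\rho|_{\Gamma'}$ is $k$-Anosov in restriction to $(Y,\phi)$; nothing needs to be changed.

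The content lies in part (2), where I must upgrade a $\Gamma'$-invariant splitting to a $\Gamma$-invariant one. First I would reduce to the case where $\Gamma'$ is \emph{normal}: replacing $\Gamma'$ by its normal core $\Gamma''=\bigcap_{\gamma\in\Gamma}\gamma\Gamma'\gamma^{-1}$, which is normal and of finite index in $\Gamma$ and contained in $\Gamma'$, part (1) shows that $\rho|_{\Gamma''}$ is $k$-Anosov in restriction to $(Y,\phi)$; proving the claim for $\Gamma''\triangleleft\Gamma$ then yields it for $\Gamma$. So assume $\Gamma'\triangleleft\Gamma$. Since $\Gamma'$ has finite index and $\Gamma\backslash Y$ is compact, the quotient $\Gamma'\backslash Y$ is a finite cover of $\Gamma\backslash Y$, hence compact, so $(Y,\phi)$ is also a cocompact $\Gamma'$-flow and Remark \ref{cptnormind} applies: the dominated splitting $Y\times\Kb^d=E^s\oplus E^u$ produced by $\rho|_{\Gamma'}$ is \emph{unique}, independent of the metric.

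The heart of the argument is to exploit this uniqueness. For a fixed $\gamma\in\Gamma$, I would consider the pushed-forward splitting $\gamma_*E^s\oplus\gamma_*E^u$, where $(\gamma_*E^s)_y=\rho(\gamma)(E^s)_{\gamma^{-1}y}$, together with the transported metric $\Vert v\Vert^\gamma_y=\Vert\rho(\gamma^{-1})v\Vert_{\gamma^{-1}y}$. Because $\gamma$ commutes with $\phi$ and normalizes $\Gamma'$, this new splitting is again continuous, $\phi$-invariant and $\Gamma'$-invariant; a change of variables $y\mapsto\gamma^{-1}y$ in the domination inequality shows it is dominated with the \emph{same} constants $C,\lambda$ relative to $\Vert\cdot\Vert^\gamma$. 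By the uniqueness in Remark \ref{cptnormind}, it must coincide with $E^s\oplus E^u$, that is $\gamma_*E^s=E^s$ for every $\gamma\in\Gamma$. Thus the splitting is $\Gamma$-invariant and descends to a continuous $\phi$-invariant splitting of $E_\rho(Y)$. Choosing any $\Gamma$-invariant metric (obtained e.g.\ by averaging $\Vert\cdot\Vert$ over the finite quotient $\Gamma/\Gamma'$), the metric-independence of domination on the cocompact flow $(Y,\phi)$ (Remark \ref{cptnormind}) guarantees that the splitting remains dominated, so $\rho$ is $k$-Anosov in restriction to $(Y,\phi)$.

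The only delicate point, and the step I would check most carefully, is the transport computation showing that $\gamma_*E^s\oplus\gamma_*E^u$ is dominated: one must track that in the trivialization the flow acts by the identity on fibers while the metric carries all the base-point dependence, so that the ratio appearing at $(y,t)$ for the pushed-forward data equals the ratio at $(\gamma^{-1}y,t)$ for the original data. Granting this identity, the uniqueness statement of Remark \ref{cptnormind} does all the remaining work.
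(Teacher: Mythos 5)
Your proposal is correct and follows essentially the same route as the paper: reduce to the normal case, push the splitting forward by $\gamma\in\Gamma$, and invoke uniqueness of the dominated splitting over the cocompact flow to force $\Gamma$-invariance. The only cosmetic difference is that the paper first replaces the metric by a $\Gamma$-invariant one (using metric-independence on cocompact flows) and then pushes forward only the splitting, whereas you transport both the splitting and the metric and appeal to metric-independence at the end; these are equivalent reorganizations of the same argument.
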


\begin{proof}
For part (1), let $\Vert \cdot \Vert$ be a $\Gamma$-invariant norm on $E_\rho(Y)$ and let $E_\rho(Y)= E_\rho^s(Y)\oplus E_\rho^u(Y)$ be a $\Gamma$-invariant and $\phi$-invariant splitting of rank $k$ that is dominated for $\Vert \cdot \Vert$. Then the splitting and the norm are in particular $\Gamma'$-invariant and define a dominated splitting over $\Gamma'\setminus Y$.

Now we prove part (2). By $(1)$ we can restrict to a smaller subgroup of finite index and assume that $\Gamma'$ is normal in $\Gamma$. let $E_\rho(Y) = E_\rho^s(Y)\oplus E_\rho^u(Y)$ be a $\phi$-invariant and $\Gamma'$-invariant splitting of rank $k$ over $Y$, which is dominated for a $\Gamma'$-invariant norm $\Vert \cdot \Vert$. Since $\Gamma'$ acts cocompactly on $Y$, the domination condition does not depend on the choice of the norm and we can assume without loss of generality that $\Vert \cdot \Vert$ is in fact $\Gamma$-invariant.
    
For $\gamma \in \Gamma$, consider the push-forward of the splitting, defined by
\[\gamma_* E_\rho^s(Y)_{x} \oplus \gamma_* E_\rho^u(Y)_x = \rho(\gamma) E_\rho^s(Y)_{\gamma^{-1}x} \oplus \rho(\gamma) E_\rho^u(Y)_{\gamma^{-1}x} \]
Since the action of $\Gamma$ on $Y\times \Rb^d$ commutes with the flow $\phi$, the push-forward splitting is again $\phi$-invariant. Moreover, for $\eta\in \Gamma'$, we have
\begin{eqnarray*}
\gamma_* E_\rho^s(Y)_{\eta x} & = & \rho(\gamma) E_\rho^s(Y)_{\gamma^{-1} \eta x} \\
&=& \rho(\gamma) E_\rho^s(Y)_{\gamma^{-1} \eta \gamma (\gamma^{-1}x)}\\
&=& \rho(\gamma) \rho(\gamma^{-1}\eta \gamma) E_\rho^s(Y)_{\gamma^{-1} x} \\
& & \textrm{since $\Gamma'$ is normal in $\Gamma$ and $E_\rho^s(Y)$ is $\Gamma'$ invariant}\\
&=& \rho(\eta) \gamma_* E_\rho^s(Y)_{x}~.
\end{eqnarray*}
The same holds for $\gamma_*E_\rho^u(Y)$, showing that the push-forward splitting is again $\Gamma'$ invariant. Finally, since $\Vert \cdot \Vert$ is $\Gamma$-invariant, we get that the push-forward splitting is a dominated splitting over $\Gamma'\setminus Y$. By uniqueness of such a splitting we conclude that
\[\gamma_* E_\rho^s(Y) \oplus \gamma_* E_\rho^u(Y) = E_\rho^s(Y) \oplus E_\rho^u(Y)~. \]
Hence $E_\rho^s(Y) \oplus E_\rho^u(Y)$ is a dominated splitting over $\Gamma \setminus Y$ and $\rho$ is $k$-Anosov in restriction to $Y$.
\end{proof}

Finally, one of the main properties of restricted Anosov representations over cocompact $\Gamma$-flows is their stability under small deformation.

\begin{proposition}\label{stabilityofRA}
    Let $(Y,\phi)$ be a cocompact $\Gamma$-flow. Then the space \[ A^k_Y(\Gamma, \SL(d,\Kb)) = \{\rho: \Gamma \to \SL(d,\Kb)\text{ $k$-Anosov in restriction to $Y$}\}\]
    is open in $\Hom(\Gamma,\SL(d,\Kb))$.
\end{proposition}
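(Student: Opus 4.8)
The plan is to reduce the statement to the classical robustness of dominated splittings for linear cocycles over a compact base, and then to check that the relevant cocycle depends continuously on $\rho$. Since $(Y,\phi)$ is cocompact, $M := \Gamma\backslash Y$ is compact, and by Remark~\ref{cptnormind} the domination condition in Definition~\ref{restAnosov} is independent of the chosen metric; I would therefore fix once and for all a single continuous metric with which to measure everything. The key simplifying observation is that in the trivialization $Y\times \Kb^d$ the lifted flow is $\phi^t(y,v)=(\phi^t y,v)$, so the \emph{entire} dependence on $\rho$ sits in the $\Gamma$-action, i.e. in the transition functions $\rho(\gamma)$ of the flat bundle $E_\rho(Y)\to M$. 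Fix $T>0$ and a compact set $K\subset Y$ with $\Gamma K=Y$. Then $\phi^T K$ is compact, so by proper discontinuity the finite set $F=\{\gamma\in\Gamma:\gamma K\cap \phi^T K\neq\emptyset\}$ governs the time-$T$ transport over all of $M$. Reading that transport in the local trivializations induced by $Y\to M$ yields a continuous linear cocycle $A^T_\rho$ over the time-$T$ map of $\phi$ on $M$, built out of the finitely many matrices $\{\rho(\gamma)\}_{\gamma\in F}$. In particular, because only finitely many evaluation maps $\rho\mapsto\rho(\gamma)$ are involved, $\rho\mapsto A^T_\rho$ is continuous, uniformly on $M$, for the natural topology on $\Hom(\Gamma,\SL(d,\Kb))$.

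Next I would invoke the single-time characterization of dominated splittings (the Bochi--Gourmelon characterization via singular values). With respect to the fixed metric, $\rho$ is $k$-Anosov in restriction to $(Y,\phi)$ if and only if there exist $T>0$ and $0<\tau<1$ such that the time-$T$ cocycle has a uniform singular value gap at index $k$:
\[
\sup_{y\in M}\ \frac{\sigma_{k+1}\bigl(A^T_\rho(y)\bigr)}{\sigma_{k}\bigl(A^T_\rho(y)\bigr)}\ \leqslant\ \tau .
\]
One direction is immediate: a dominated splitting with constants $C,\lambda$ forces such a gap as soon as $Ce^{-\lambda T}<1$, since the $k$ dominant singular directions of $A^T_\rho(y)$ align with $E^s_\rho$ and the remaining ones with $E^u_\rho$, and $M$ is compact. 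The converse --- that a uniform gap at a single time produces a genuine continuous $\phi$-invariant dominated splitting --- is the substantial classical input: the one-step gap bootstraps to an exponential gap for all times along orbits, and the stable/unstable subbundles are recovered as limits of the most contracted/expanded $k$-planes, equivalently as the attracting section of the induced dynamics on the Grassmannian bundle, with uniform constants coming from compactness of $M$.

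Openness of $A^k_Y(\Gamma,\SL(d,\Kb))$ then follows by a soft continuity argument. Given $\rho_0$ in this set, fix $T$ and $\tau<1$ as above, so that by compactness of $M$ and continuity of singular values the supremum is attained and $\sup_{y}\sigma_{k+1}(A^T_{\rho_0}(y))/\sigma_k(A^T_{\rho_0}(y))\leqslant \tau<1$. Since $\rho\mapsto A^T_\rho$ is continuous uniformly on $M$ and singular values are continuous functions of a linear map, there is a neighborhood of $\rho_0$ in $\Hom(\Gamma,\SL(d,\Kb))$ on which this supremum stays, say, below $(1+\tau)/2<1$. By the characterization, every $\rho$ in that neighborhood is $k$-Anosov in restriction to $(Y,\phi)$, which proves the proposition.

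The main obstacle is concentrated in the first two stages rather than the last: namely, making precise that the family of time-$T$ transports forms a continuous family of cocycles over the \emph{fixed} flow on $M$ (the finiteness of $F$ and the continuity of the holonomy cocycle across the regions where the representative $\gamma\in F$ jumps), and correctly importing the Bochi--Gourmelon theorem, whose nontrivial half reconstructs an invariant splitting from a one-step gap. Once these dynamical and bookkeeping inputs are in place, openness is a direct consequence of the continuity of singular values, with no further estimate required.
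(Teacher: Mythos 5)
Your first stage is sound and closely parallels the paper's Lemma \ref{isolem}: since $\Gamma K=Y$ for a compact $K$ and the action is properly discontinuous, the time-$T$ holonomy of $E_\rho(Y)$ over the compact quotient involves only finitely many matrices $\rho(\gamma)$, so $\rho\mapsto A^T_\rho$ is a continuous family of cocycles over a fixed compact base. The genuine gap is in your second stage: the claimed equivalence between a dominated splitting of rank $k$ and a uniform singular value gap of the time-$T$ cocycle at a \emph{single} time $T$ is false, because the ratio $\sigma_{k+1}/\sigma_k$ is not submultiplicative along orbits and a one-step gap does not bootstrap. Concretely, take a two-point base $\{x_1,x_2\}$ swapped by the dynamics, with $A(x_1)=\mathrm{diag}(2,\tfrac{1}{2})$ and $A(x_2)=\mathrm{diag}(\tfrac{1}{2},2)$: every time-one matrix has singular value ratio $\tfrac{1}{4}$, yet $A^2\equiv \Id$ at both points and there is no dominated splitting (the same phenomenon occurs for flows via suspension). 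The correct Bochi--Gourmelon characterization requires $\sigma_{k+1}(A^t_\rho(y))/\sigma_k(A^t_\rho(y))\leqslant Ce^{-\lambda t}$ for \emph{all} $t\geqslant 0$, and with that criterion your ``soft continuity'' step collapses: continuity of $\rho\mapsto A^T_\rho$ for each fixed $T$ gives no uniform control for large $t$, and establishing that control under perturbation is exactly the non-trivial content of the robustness of dominated splittings, not a consequence of continuity of singular values.

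The repair is to replace the singular-value gap by a finite-time criterion that genuinely iterates, namely a strictly invariant cone field: a dominated splitting for $\rho_0$ yields cones around $E^u_{\rho_0}$ (graphs of maps $E^u_{\rho_0}\to E^s_{\rho_0}$ of norm at most $1$) that $\phi^T$ maps strictly and uniformly inside themselves; strict invariance is verified on the compact quotient, involves only your finitely many matrices, hence persists for $\rho$ near $\rho_0$, and a strictly invariant cone field reconstructs a dominated splitting. This is essentially what the paper does: after identifying $E_\rho(Y)$ with $E_{\rho_0}(Y)$ via the continuous gauge $g(\rho,\cdot)$ of Lemma \ref{isolem} (your ``finitely many matrices'' observation in disguise), it runs a graph-transform contraction on sections of $\Hom(E^u_{\rho_0},E^s_{\rho_0})$ and $\Hom(E^s_{\rho_0},E^u_{\rho_0})$ of norm at most $1$, whose fixed points produce the invariant splitting for the perturbed flow.
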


Proposition \ref{stabilityofRA} essentially follows from a general stability theorem for dominated splittings (see for instance \cite{Shub} Corollary 5.19. \cite{CZZ} Theorem 8.1 or \cite{W23} Theorem 7.1). A key point is that one can see the linear flows associated to representations in the neighbourhood of a representation $\rho_0$ as perturbations of the flow $\phi$ on the fixed vector bundle $E_\rho(Y)$. This is ensured by the following Lemma.

\begin{lemma}\label{isolem}
     Let $\rho_0:\Gamma\to \SL(d,\Kb)$ be a representation. Then there exists a neighborhood $O$ of $\rho_0$ in $\Hom(\Gamma,\SL(d,\Kb))$ and a continuous map
     \[g: O\times Y \to \GL(d,\Kb), \] such that \[\rho(\gamma)g(\rho,y)=g(\rho,\gamma y)\rho_0(\gamma)\]
     and $g|_{\rho_0\times Y}\equiv Id$.
\end{lemma}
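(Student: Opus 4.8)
The condition $\rho(\gamma)g(\rho,y)=g(\rho,\gamma y)\rho_0(\gamma)$ says precisely that, for each fixed $\rho$, the map $(y,v)\mapsto\big(y,g(\rho,y)v\big)$ on $Y\times\Kb^d$ intertwines the $\rho_0$-action with the $\rho$-action; equivalently it descends to a vector bundle isomorphism $E_{\rho_0}(Y)\to E_\rho(Y)$ covering the identity of $\Gamma\backslash Y$, reducing to the identity when $\rho=\rho_0$. The plan is to build such a $g$ by averaging the ``naive intertwiner'' $\gamma\mapsto\rho(\gamma)\rho_0(\gamma)^{-1}$ against a $\Gamma$-equivariant partition of unity on $Y$.

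Concretely, I would first use that $\Gamma$ acts properly discontinuously on $Y$ to fix a continuous $\chi:Y\to\Rb_{\geqslant 0}$ whose $\Gamma$-translates form a locally finite family summing to $1$, i.e. for every $y$ the sum $\sum_{\gamma\in\Gamma}\chi(\gamma^{-1}y)$ is locally finite and equal to $1$. (Such a $\chi$ is produced in the standard way by normalizing a nonnegative bump function with $\Gamma$-``compact'' support whose orbit covers $Y$, proper discontinuity giving the local finiteness.) I would then set
\[g(\rho,y)=\sum_{\gamma\in\Gamma}\chi(\gamma^{-1}y)\,\rho(\gamma)\rho_0(\gamma)^{-1}\in\Mat(d,\Kb).\]
Reindexing the sum by $\gamma=\delta\gamma'$ (so that $\chi(\gamma^{-1}\delta y)=\chi(\gamma'^{-1}y)$) yields at once the intertwining relation in the form $g(\rho,\delta y)=\rho(\delta)g(\rho,y)\rho_0(\delta)^{-1}$, and evaluating at $\rho=\rho_0$ collapses each summand to $\chi(\gamma^{-1}y)\,\Id$, so that $g(\rho_0,\cdot)\equiv\Id$ by the normalization of $\chi$.

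What remains is continuity and invertibility on a neighborhood $O$ of $\rho_0$. Continuity is immediate: near any point of $Y$ only finitely many translates of $\chi$ are nonzero, so locally $g$ is a finite sum of the jointly continuous maps $(\rho,y)\mapsto\chi(\gamma^{-1}y)\,\rho(\gamma)\rho_0(\gamma)^{-1}$. For invertibility, the key observation is that the intertwining relation forces $\det g(\rho,\delta y)=\det g(\rho,y)$, since $\rho(\delta),\rho_0(\delta)\in\SL(d,\Kb)$; hence $\det g(\rho,\cdot)$ descends to a continuous function on the quotient $\Gamma\backslash Y$ that is identically $1$ at $\rho=\rho_0$. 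The hard part — and essentially the only place the geometry enters — is to choose $O$ \emph{uniformly in} $y$: the factors $\rho(\gamma)\rho_0(\gamma)^{-1}$ need not stay close to $\Id$ for long words $\gamma$, so some control over the behaviour ``at infinity'' of $\Gamma\backslash Y$ is required. I expect this to be supplied by compactness of the quotient in the cocompact situation relevant to Proposition \ref{stabilityofRA}, where one simply shrinks $O$ until the descended function $\det g(\rho,\cdot)$ stays bounded away from $0$ on the compact space $\Gamma\backslash Y$; securing the $\Gamma$-partition of unity $\chi$ is the other technical prerequisite to pin down.
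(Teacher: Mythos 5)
Your proposal is correct and follows essentially the same route as the paper: average the naive intertwiner $\rho(\gamma)\rho_0(\gamma)^{-1}$ against a $\Gamma$-partition of unity obtained by normalizing a bump function supported near a compact set $K$ with $\Gamma K=Y$, check equivariance and $g(\rho_0,\cdot)\equiv\Id$, and use cocompactness to shrink $O$. The only cosmetic difference is in the invertibility step, where you pass to the descended determinant on $\Gamma\backslash Y$ while the paper checks invertibility on $K$ and propagates it by the equivariance relation; the two arguments are equivalent.
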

\begin{proof}
    Let $K$ be a compact subset of $Y$ such that $\Gamma K=Y$. Let $U$ be an open, relatively compact subset of $Y$ that contains $K$. Then there exists a continuous function $f:Y\to \Rb_{\geqslant 0}$ such that $f=1$ on $K$ and $f=0$ on $Y\setminus U$. Define 
    \[\begin{array}{cccc}g: & \Hom(\Gamma,\SL(d,\Kb))\times Y &\to & Mat_{d\times d}(\Kb)\\
    & (\rho,y)&\mapsto & \dfrac{1}{\sum_{\gamma\in\Gamma} f(\gamma^{-1}y)} \sum_{\gamma\in\Gamma} f(\gamma^{-1}y)\rho(\gamma)\circ \rho_0(\gamma)^{-1}~.
    \end{array}\]
    Note that $f(\gamma^{-1}y)= 0$ for all but finitely many $\gamma$ (by the properness of the $\Gamma$ action and relative compactness of $U$), $\sum_{\gamma\in\Gamma} f(\gamma^{-1}y) \geq 1$ since $\Gamma y \cap K \neq \emptyset$ and $f \equiv 1$ on $K$. Hence $g$ is well-defined and continuous. One easily verifies that
    \begin{equation}\label{eq: equivariance g(rho,gamma)}
    \rho(\gamma) g(\rho, y) = g(\rho, \gamma y) \rho_0(y)\end{equation} 
and that $g|_{\rho_0 \times Y} \equiv \Id$. By the continuity of $g$, there is a neighborhood $O$ of $\rho_0$ such that $g|_{O\times K}$ takes values in $\GL(d,\Kb)$. Finally the equivariance property (\ref{eq: equivariance g(rho,gamma)}) implies that $g|_{O\times Y}$ takes values in $\GL(d,\Kb)$.
\end{proof}

The equivariance property of $g$ precisely means that $g(\rho,\cdot)$ factors to a bundle isomorphism from $E_{\rho_0}(Y)$ to $E_\rho(Y)$ which depends continuously on $\rho$. The rest of the proof of Proposition \ref{stabilityofRA} follows classical stability arguments for dominated splittings. We sketch here for completeness.

\begin{proof}[Proof of Proposition \ref{stabilityofRA}]
Let $\iota_0$ denote the $\Gamma$-action on $O\times Y\times \Kb^d$ defined by $\gamma(\rho,y,v)=(\rho,\gamma y,\rho_0(\gamma)y)$ for any $\rho\in O$, $\gamma\in\Gamma$ and $v\in\Kb^d$, and let $\iota$ denote the $\Gamma$-action on $O\times Y\times \Kb^d$ defined by $\gamma(\rho,y)=(\rho,y,\rho(\gamma)v)$ $\rho\in O$, $\gamma\in\Gamma$ and $v\in\Kb^d$. It is natural to extend $\phi$ on $O\times Y$ by $\phi^t(\rho,y)=(\rho,\phi^t(y))$.\\

The map $g$ given by the lemma above induces a $\iota_0$-$\iota$-equivariant map \[ O\times Y \times \Kb^d \simeq O\times Y \times \Kb^d\]
by $(\rho,y,v)\to (\rho,y,g(\rho,y)v)$ for any $\rho\in O$, $y\in Y$ and $v\in\Kb^d$, which is an isomorphism of vector bundles fibring over $id_{O\times Y}$, and hence induces a continuous isomorphism \[\hat{g}: \iota_0(\Gamma)\backslash(O\times Y \times \Kb^d) \simeq \iota(\Gamma)\backslash(O\times Y \times \Kb^d)~.\]

A dominated splitting $E_{\rho_0}(Y)= E^s_{\rho_0}\oplus E^u_{\rho_0}$ induces a dominated splitting \[\iota_0(\Gamma)\backslash (O\times Y\times \Kb^d) = (O\times E^s_{\rho_0})\oplus (O\times E^u_{\rho_0})\] with respect to a $\Gamma$-invariant metric. Then there is a decomposition \[\iota(\Gamma)\backslash (O\times Y\times \Kb^d) = \hat{g}(O\times E^s_{\rho_0}) \oplus \hat{g}(O\times E^u_{\rho_0})~.\] We denote $E_O^s=\hat{g}(O\times E^s_{\rho_0})$ and $E_O^u=\hat{g}(O\times E^u_{\rho_0})$ in brief. The decomposition is $\phi$-invariant over $\{\rho_0\}\times \Gamma\backslash Y$, but may not be $\phi$-invariant over the whole $O\times \Gamma\backslash Y$. We wish to make it $\phi$-invariant by small deformation. More concretely, we define a flow $\Phi$ (respectively, $\Psi$) on the space of continuous sections of $\Hom({E}_O^u, {E}_O^s)$ (respectively, $\Hom({E}_O^s, {E}_O^u)$) with norm at most $1$, such that $\phi^t$ maps the graph of $f_{(\rho,y)}$ to the graph of $\Phi^t(f_{(\rho,y)})$ (respectively, $\Psi^t(f_{(\rho,y)})$)for any section $f$ with norm at most $1$ and $(\rho,y)\in O\times \Gamma\backslash Y$. Following the argument of Lemma 7.4 in \cite{W23}, $\Phi$ and $\Psi$ are well-defined contraction maps. The images of the unique fixed point of $\Phi$ and the unique fixed point of $\Psi$, which are independent to $t$, give a new decomposition of $\iota(\Gamma)\backslash (O\times Y\times \Kb^d)$, which is $\phi$-invariant. Up to replacing $O$ by a smaller neighborhood of $\rho_0$, this decomposition of $\iota(\Gamma)\backslash (O\times Y\times \Kb^d)$ gives a dominated splitting for each piece $\iota(\Gamma)\backslash (\{\rho\} \times Y\times \Kb^d)\simeq E_{\rho}(Y)$, which completes the proof.
\end{proof}

\subsection{Relatively Anosov representations}
While Anosov representations are meant to generalize convex-cocompact representations to higher rank Lie groups, the notion of \emph{relatively Anosov representation} introduced by Zhu \cite{Zhu} and Zhu--Zimmer \cite{ZZ1}, which is equivalent to that of \emph{asymptotically embedded representation} introduced previously by Kapovich--Leeb \cite{KL}, is meant to extend to higher rank the geometrically finite representations of relatively hyperbolic groups.

Let $(\Gamma,\Pi)$ be a relatively hyperbolic pair.

\begin{definition}
 A representation $\rho:\Gamma\to \SL(d,\Kb)$ is $k$-Anosov relative to $\Pi$ if there exists a pair of continuous maps
\[\xi = (\xi^k,\xi^{d-k}):\partial_\infty (\Gamma, \Pi) \to \Gr_K(\Kb^d) \times Gr_{d-k}(\Kb^d)\]
which is 
\begin{itemize}
    \item $\rho$-equivariant, that is, $\rho(\gamma)\xi (\cdot) = \xi(\gamma( \cdot))$ for any $\gamma\in\Gamma$;
    \item transverse, that is, $\xi^k(x)\oplus \xi^{d-k}(y) = \Kb^d$ for any $x\ne y \in \partial_\infty (\Gamma,\Pi)$;
    \item strongly dynamics preserving, that is, for any sequence $(\gamma_n)_{n\in \Nb}\subset \Gamma$ such that $\gamma_n\to x\in \partial_\infty (\Gamma,\Pi)$ and $\gamma_n^{-1}\to y\in \partial_\infty (\Gamma,\Pi)$ as $n\to +\infty$, one has $\rho(\gamma_n)V \to \xi^k(x)$ as $n\to +\infty$ for any $V\in \Gr_k(\Kb^d)$ that transverse to $\xi^{d-k}(y)$.
\end{itemize}
\end{definition}
Let $X$ be a Gromov model of $(\Gamma,\Pi)$. Let $\rho:\Gamma \to \SL(d,\Kb)$ be a $k$-Anosov representation relative to $\Pi$. 
The pair of transverse boundary maps associated to $\rho$ defines a $\Gamma$-invariant and $\phi$-invariant splitting of $E_\rho(\mathcal G(X))$, which Zhu and Zimmer prove to be dominated:

\begin{theorem}[\cite{ZZ1} Theorem 1.3]\label{relativeimplyflowanosov}
Let $\rho:\Gamma \to \SL(d,\Kb)$ be a $k$-Anosov representation relative to $\Pi$. Then $\rho$ is $k$-Anosov in restriction to the $\Gamma$-flow $\mathcal G(X)$.
\end{theorem}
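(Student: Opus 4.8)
The plan is to read the splitting directly off the boundary maps and then to spend all the effort on producing a metric that makes it dominated uniformly over the non-compact quotient $\Gamma\backslash\mathcal{G}(X)$. Since $X$ is a Gromov model of $(\Gamma,\Pi)$, we have $\partial_\infty X=\partial_\infty(\Gamma,\Pi)$, and every $\ell\in\mathcal{G}(X)$ has distinct backward and forward endpoints $\ell_-\neq\ell_+$ in $\partial_\infty(\Gamma,\Pi)$. I would set
\[
E^s_\ell:=\xi^k(\ell_+),\qquad E^u_\ell:=\xi^{d-k}(\ell_-),
\]
so that transversality of $\xi$ gives $E^s_\ell\oplus E^u_\ell=\Kb^d$ with $\dim E^s_\ell=k$. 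The endpoint maps $\ell\mapsto(\ell_-,\ell_+)$ are continuous, $\Gamma$-equivariant, and constant along the flow $\psi$ (which only reparametrizes $\ell$); combined with the continuity and $\rho$-equivariance of $\xi=(\xi^k,\xi^{d-k})$, this shows that $\ell\mapsto(E^s_\ell,E^u_\ell)$ is continuous, $\phi$-invariant, and $\rho$-equivariant, hence descends to a continuous $\phi$-invariant splitting $E_\rho(\mathcal{G}(X))=E^s_\rho\oplus E^u_\rho$ of rank $k$. Everything up to here is formal; the real content of the theorem is the existence of a metric exhibiting domination.

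In the trivialization $\mathcal{G}(X)\times\Kb^d$ the flow acts by $\phi^t(\ell,v)=(\psi^t\ell,v)$, so it fixes vectors and only translates the base geodesic, and $E^s_\ell,E^u_\ell$ are literally constant along each flow line. The whole burden of the domination inequality is therefore carried by a judicious choice of a continuous $\Gamma$-invariant family of norms $\|\cdot\|_\ell$, for which one needs
\[
\frac{\|v\|_{\psi^t\ell}}{\|w\|_{\psi^t\ell}}\leq C\,e^{-\lambda t}\,\frac{\|v\|_\ell}{\|w\|_\ell}\qquad(v\in E^s_\ell,\ w\in E^u_\ell,\ t\geq 0).
\]
Here I would exploit that the projection $\pi':\mathcal{G}(X)\to X$, $\ell\mapsto\ell(0)$, is a $\Gamma$-equivariant quasi-isometry and that $\ell$ is unit speed, so flow-time is exactly arc length in the cusped space $X$: the flow moves from $\ell(0)$ to $\ell(t)$ at distance $t$. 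Domination then becomes a \emph{uniform singular-value gap} along geodesics: with the right adapted (Lyapunov-type) metric it reduces to an estimate of the form $\sigma_{k+1}(\rho(\gamma))/\sigma_k(\rho(\gamma))\leq C e^{-\lambda\, d_X(o,\gamma o)}$ for group elements $\gamma$ tracking $\ell$, where $\sigma_i$ denotes the $i$-th singular value. The strongly dynamics preserving property is the dynamical engine here: for $\gamma_n\to\ell_+$, $\gamma_n^{-1}\to\ell_-$ it forces $\rho(\gamma_n)$ to crush every subspace transverse to $\xi^{d-k}(\ell_-)$ onto $\xi^k(\ell_+)$, and the quantitative, uniform version of this collapse is exactly the gap.

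The main obstacle is that $\Gamma\backslash\mathcal{G}(X)$ is \emph{not} compact: a relatively hyperbolic pair has cusps, and a geodesic may plunge arbitrarily deep into a horoball $\gamma B_i$, where the orbit $\Gamma o$ is sparse, so one cannot follow $\ell$ by nearby orbit points nor invoke compactness to make all continuous metrics comparable (as in the classical cocompact case). I would therefore decompose a geodesic into thick-part excursions and horoball excursions and glue the estimates via the cocycle relation $c(z,t+s)=c(z,t)+c(\phi^t z,s)$. On the thick part, where $\Gamma$ acts cocompactly, $\ell$ stays uniformly close to $\Gamma o$ and the gap follows from the standard Bochi--Potrie--Sambarino argument \cite{BPS}. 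The delicate part is a maximal subsegment of $\ell$ inside a horoball $\gamma B_i$: there the relevant group elements all lie in a single parabolic subgroup $\gamma\Pi_i\gamma^{-1}$, the excursion is governed geometrically by the parabolic point $\gamma p_i$, and one must show that the contraction of $E^s$ relative to $E^u$ accumulated during the excursion is still bounded below exponentially in the flow-time (equivalently the depth) spent inside. I expect this to be the heart of the proof: it requires combining the geometry of the Gromov model inside horoballs with a uniform form of the strongly dynamics preserving property for parabolic sequences converging to $\gamma p_i$, which encodes the constraint imposed on $\rho$ restricted to each peripheral subgroup $\Pi_i$. Matching the thick and cusp estimates through the cocycle then yields uniform constants $C,\lambda$ and the desired dominated splitting.
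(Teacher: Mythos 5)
This statement is not proved in the paper: it is quoted verbatim from Zhu--Zimmer \cite{ZZ1}, and the only内 content the paper supplies is the one-line remark preceding it, namely that the pair of transverse boundary maps defines the $\Gamma$- and $\phi$-invariant splitting. Your first paragraph reproduces exactly that formal step, correctly: setting $E^s_\ell=\xi^k(\ell_+)$ and $E^u_\ell=\xi^{d-k}(\ell_-)$ and using continuity, transversality, equivariance and flow-invariance of the endpoint maps is right, and is all the paper itself says. So the comparison can only be between your sketch and what a complete proof would require.

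There is a genuine gap, and you have in effect flagged it yourself: the entire analytic content of the theorem is the uniform domination estimate over the non-compact quotient $\Gamma\backslash\mathcal{G}(X)$, and your treatment of the horoball excursions is a description of what must be proved (``I expect this to be the heart of the proof: it requires combining\ldots''), not an argument. Two specific points make this step non-trivial and cannot be waved through. First, your proposed reduction to a singular value gap $\sigma_{k+1}(\rho(\gamma))/\sigma_k(\rho(\gamma))\leq Ce^{-\lambda d_X(o,\gamma o)}$ ``for group elements $\gamma$ tracking $\ell$'' breaks down precisely where it is needed: deep inside a horoball of the Gromov model the orbit $\Gamma o$ is sparse, so there are no tracking elements, and the passage between flow-time (depth in the cusp) and word/orbit geometry of the parabolic subgroup is exactly the delicate exchange that must be quantified. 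Second, the ``uniform form of the strongly dynamics preserving property for parabolic sequences'' that you invoke is not a consequence of the definition as stated; the definition gives convergence $\rho(\gamma_n)V\to\xi^k(x)$ sequence by sequence, with no rate, and upgrading this to an exponential contraction rate that is uniform over all conjugates $\gamma\Pi_i\gamma^{-1}$, all depths, and all excursion lengths is where the actual work of \cite{ZZ1} lies. Without that quantitative lemma the constants $C,\lambda$ you need to glue via the cocycle relation are not produced. In short: the splitting is free, the metric is not, and the proposal does not construct it.
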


\begin{remark}
The notion of being ``Anosov in restriction $\mathcal G(X)$'' is called ``Anosov relative to $X$'' in \cite{ZZ1}.
\end{remark}

The main example of a relatively representation is the inclusion of a \emph{geometrically finite subgroup} of $\SL(2,\Cb)$. Recall that a subgroup $\Gamma$ of $\SL(2,\Cb)$ is called a \emph{geometrically finite subgroup} if the action of $\Gamma$ on its limit set $\Lambda(\Gamma) \subset \Cb \mathbf P^1$ is geometrically finite in the sense of Definition \ref{defconvergencegroup} (see \cite{Bow12}). In particular, such a group $\Gamma$ is hyperbolic relatively to the collection $\Pi$ of stabilizers of its parabolic points.

\begin{proposition}[\cite{ZZ2} Proposition 1.7]\label{gfimpliesra}
Let $\Gamma$ be a geometrically finite subgroup of $\SL(2,\Cb)$ and $\Pi$ the collection of its parabolic stabilizers. Then the inclusion $\Gamma \hookrightarrow \SL(2,\Cb)$ is $1$-Anosov relative to $\Pi$.
\end{proposition}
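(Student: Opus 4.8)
The plan is to take the boundary maps to be, quite literally, the inclusion of the limit set into projective space. Since $d = 2$ and $k = 1$, both components of $\xi = (\xi^1,\xi^{d-1})$ land in $\Gr_1(\Cb^2) = \Cb\mathbf P^1$, and I would set $\xi^1 = \xi^{d-1}$ equal to the composition of the $\Gamma$-equivariant homeomorphism $\partial_\infty(\Gamma,\Pi) \cong \Lambda(\Gamma)$ (which holds because the geometrically finite convergence action of $\Gamma$ on $\Lambda(\Gamma)$ realises the Bowditch boundary, by Yaman's theorem) with the inclusion $\Lambda(\Gamma) \hookrightarrow \Cb\mathbf P^1 = \Gr_1(\Cb^2)$. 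With this choice, $\rho$-equivariance is immediate, since the M\"obius action of $\gamma \in \Gamma$ on $\Cb\mathbf P^1$ is exactly the action of $\rho(\gamma) = \gamma$ on the lines of $\Cb^2$; and transversality is automatic, since two distinct points of $\Cb\mathbf P^1$ are two distinct lines of $\Cb^2$, which therefore span.

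The only real content is the strongly dynamics preserving property, and I would reduce it to the classical North--South dynamics of M\"obius transformations on the \emph{whole} projective line. Fix a sequence $(\gamma_n)$ with $\gamma_n \to x$ and $\gamma_n^{-1} \to y$ in $\partial_\infty(\Gamma,\Pi)$, so that $\gamma_n z \to x$ for every limit point $z \neq y$. Writing each $\gamma_n$ through a Cartan ($KAK$) decomposition $\gamma_n = k_n a_n k_n'$ with $k_n, k_n' \in \SU(2)$ and $a_n = \mathrm{diag}(\sigma_n, \sigma_n^{-1})$, $\sigma_n \geqslant 1$, the unboundedness of $(\gamma_n)$ forces $\sigma_n \to +\infty$, so the singular value gap $\sigma_n^2 \to +\infty$ automatically (this is where $\det = 1$ in rank one is used). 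Passing to a subsequence with $k_n \to k$ and $k_n' \to k'$, the diagonal parts $a_n$ have North--South dynamics on $\Cb\mathbf P^1$, and hence $\gamma_n$ converges locally uniformly on $\Cb\mathbf P^1 \setminus \{b\}$ to a constant line $a$, where $a = k\cdot[e_1]$ is the attracting line and $b = (k')^{-1}\cdot[e_2]$ the repelling line.

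It then remains to identify $a = \xi^1(x)$ and $b = \xi^1(y)$, and here I would use that $\Lambda(\Gamma)$ is infinite (the pair is non-elementary). The M\"obius dynamics restricts to $\Lambda(\Gamma)$, giving $\gamma_n z \to a$ for every $z \in \Lambda(\Gamma)\setminus\{b\}$, while the convergence action gives $\gamma_n z \to x$ for $z \neq y$. Choosing any $z \in \Lambda(\Gamma)\setminus\{y,b\}$ yields $a = \xi^1(x)$, and running the same argument for $\gamma_n^{-1}$ (whose attracting point is $y$ and whose M\"obius repelling point is $b$) yields $b = \xi^1(y)$. Consequently, for every line $V$ transverse to $\xi^1(y)$, i.e.\ every $V \neq \xi^1(y)$ in $\Cb\mathbf P^1$, we obtain $\rho(\gamma_n)V = \gamma_n V \to \xi^1(x)$; a standard subsequence argument promotes this from the extracted subsequence to the full sequence. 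This is exactly the strongly dynamics preserving condition.

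The step I expect to be the main obstacle is establishing the full North--South dynamics on $\Cb\mathbf P^1$ (not merely on $\Lambda(\Gamma)$) together with the clean matching of the attracting and repelling lines with $\xi^1(x)$ and $\xi^1(y)$: the abstract convergence action only controls the behaviour on the limit set, so the Cartan decomposition is genuinely needed to describe the dynamics on the complement $\Cb\mathbf P^1 \setminus \Lambda(\Gamma)$, and a little care is required to reconcile the two a priori distinct pairs of dynamical points.
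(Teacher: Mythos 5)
Your proof is correct; note that the paper itself offers no proof of this proposition, importing it directly from Zhu--Zimmer \cite{ZZ2}, so there is nothing internal to compare against. Your route --- taking $\xi^1=\xi^{d-1}$ to be the inclusion $\Lambda(\Gamma)\hookrightarrow \Cb\mathbf{P}^1$ via Yaman's identification, and deriving the strongly dynamics preserving property by matching the convergence-group dynamics on $\Lambda(\Gamma)$ with the north--south projective dynamics coming from the $KAK$ decomposition (using that $\Lambda(\Gamma)$ is infinite to pin down the attracting and repelling lines) --- is the standard argument and is essentially the one in the cited reference.
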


When $\Gamma \subset \SL(2,\Cb)$ is geometrically finite with $\Pi$ a set of representatives of the conjugacy classes of maximal parabolic subgroups of $\Gamma$, the convex hull of $\Lambda(\Gamma)$ in $\Hb^3$, denoted by $\mathcal C(\Gamma)$, is a Gromov model of the relatively hyperbolic pair $(\Gamma,\Pi)$. Since $\mathbb{H}^3$ is uniquely geodesic, $\mathcal C(\Gamma)$ contains a unique bi-infinite geodesic between two given points of $\Lambda(\Gamma) \simeq \partial_\infty (\Gamma,\Pi)$, and one can thus identify $\mathcal{G}(\mathcal C(\Gamma))$ with $\mathcal{F}(\Gamma,\Pi)$ by an isomorphism of $\Gamma$-flows. In particular, the inclusion $\Gamma \hookrightarrow \SL(2,\Cb)$ is also $1$-Anosov in restriction to $\mathcal F(\Gamma,\Pi)$.

\begin{remark}
  While the relatively property is independent of the Gromov model, Zhu and Zimmer do not state that it implies the Anosov property in restriction to $\mathcal F(\Gamma,\Pi)$. This will be proven by the second author in a forthcoming paper. Here, we do not care about this subtlety because we will ultimately consider geometrically finite subgroups of $\PSL(2,\Cb)$, for $\mathcal F(\Gamma,\Pi)$ is isomorphic to the geodesic flow of the convex core in $\Hb^3$.
\end{remark}

\subsection{Simple Anosov representations}\label{sadef}
An example that motivates the study of restricted Anosov representations is the notion of primitive-stable representations introduced by Minsky \cite{Min}. Let $F_n$ be a free group of order $n$. Let $\Fc_{Prim}\subset\mathcal{F}(F_n)$ denote the \emph{primitive geodesic flow}, which is the closure of the collection of all geodesic axes of primitive elements of $F_n$. The second author proved in \cite{W23} Section 8.1 that a representation $\rho:F_n \to \SL(d,\Kb)$ is $k$-primitive-stable in the sense of Minsky \cite{Min}, Guéritaud--Guichard--Kassel--Wienhard \cite{GGKW} and Kim--Kim \cite{KK}, if and only if $\rho$ is $k$-Anosov in restriction to $\Fc_{Prim}$.

Here we introduce the notion of \emph{simple Anosov representations} which can be thought of as an analogue of primitive-stable representations for closed surface groups.

Let $\pi_1(S)$ be the fundamental group of a closed connected oriented surface $S$ of genus $g\geq 2$. Such a surface carries hyperbolic metrics, and for each hyperbolic metric $h$, there is a discrete and faithful representation $j: \pi_1(S) \to \PSL(2,\Rb)$ (a \emph{Fuchsian representation}) such that $(S,h)$ is isomorphic to $j(\pi_1(S))\setminus \Hb^2$.

Since $\Hb^2$ is uniquely geodesic with the $\pi_1(S)$-action cocompact, the $\pi_1(S)$-flow $\mathcal F(\pi_1(S))$ is isomorphic to the geodesic flow $\psi$ on the unit tangent bundle $T^1(\Hb^2)$ equipped with the $\pi_1(S)$-action given by $\rho$.\footnote{Concretely, one can fix a base point $x_0\in \Hb^2$ and parametrize each oriented geodesic in $\mathbb{H}^2$ by length, with the projection of the base point at $0$. This gives flow-equivariant homomorphism from $\partial_\infty \pi_1(S)^{(2)}$ to $T^1(\Hb^2)$.}

 Therefore, by Proposition \ref{anosovthroughmorphism} and \cite{BPS} Section 4, a representation $\rho:\pi_1(S)\to \SL(d,\Kb)$ is $k$-Anosov (in the classical sense) if and only if $\rho$ is $k$-Anosov in restriction to $(T^1(\Hb^2),\psi)$.

Now, let $\mathcal{F}_p(S) \subset T^1(S)$ be the closure of the union of all closed geodesics with at most $p$ self-intersections, and denote by $\mathcal{F}_p(\pi_1(S))$ its preimage in $T^1(\Hb^2) \simeq \mathcal F(\pi_1(S))$. Then $\mathcal{F}_p(\pi_1(S))$ is again a cocompact $\pi_1(S)$-subflow of $\mathcal F(\pi_1(S))$.

\begin{proposition} \label{p-SelfIntersections}
    Let $c \subset T^1(S)$ be a closed geodesic with more than $p$ self-intersections. Then $c\subset T^1(S) \setminus \mathcal F_p(S)$.
\end{proposition}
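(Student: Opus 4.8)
The plan is to show directly that no point of $c$ lies in $\mathcal F_p(S)$, which immediately gives $c \subset T^1(S)\setminus \mathcal F_p(S)$. (One may note that $\mathcal F_p(S)$ is closed and $\psi$-invariant, being the closure of a union of flow orbits, so that $c$ either lies entirely in $\mathcal F_p(S)$ or is disjoint from it; but the argument below establishes disjointness pointwise and does not rely on this dichotomy.) The heart of the matter is a semicontinuity statement: transversal self-intersections of geodesics can only be destroyed, not created, when one passes to a limit of closed geodesics, provided the lengths are controlled.

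So I would argue by contradiction, assuming that some $v \in c$ is a limit $v = \lim_n v_n$ with $v_n$ lying on a closed geodesic $c_n$ having at most $p$ self-intersections. Since $v$ lies on the closed geodesic $c$, the complete geodesic through $v$ is exactly $c$, traversed periodically with period $\ell = \ell(c)$. Writing $q > p$ for the number of self-intersections of $c$, all of them are realized by pairs of parameters inside the single period $[0,\ell)$. First I would invoke continuous dependence of geodesics on their initial conditions: since $v_n \to v$ in $T^1(S)$, the geodesics issuing from $v_n$ converge to $c$ in the $C^1$-topology uniformly on the compact interval $[0,\ell]$. Because every self-intersection of a geodesic on a hyperbolic surface is transversal (two geodesic strands through a common point with distinct directions cross transversally), these $q$ crossings persist under small $C^1$-perturbations. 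Hence for all large $n$ the arc of $c_n$ corresponding to $[0,\ell]$ carries at least $q$ transversal self-crossings, at parameter pairs converging to those of $c$.

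It remains to convert these $q$ crossings of an arc of $c_n$ into $q$ genuine self-intersections of the closed curve $c_n$, and this is where I expect the only real subtlety to lie: if $c_n$ were very short, the arc over $[0,\ell]$ would wrap around $c_n$ many times and the crossings could collapse to fewer double points. I would dispose of this by a case analysis on the lengths $\ell(c_n)$. If $\ell(c_n)$ stays bounded along a subsequence, then, since a hyperbolic surface carries only finitely many closed geodesics of bounded length, that subsequence is eventually a constant geodesic $c_*$; then $v = \lim v_n \in c_*$ forces $c = c_*$, so that $c$ would have at most $p$ self-intersections, contradicting $q > p$. Otherwise $\ell(c_n) > \ell$ for all large $n$, so the projection $[0,\ell] \to \Rb/\ell(c_n)\Zb$ is injective; consequently the $q$ crossing-pairs remain distinct unordered pairs on $c_n$ and yield $q$ distinct transversal self-intersections, whence $c_n$ has at least $q > p$ of them, again a contradiction. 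Either way the assumption fails, so $v \notin \mathcal F_p(S)$ and the proposition follows. The main obstacle is precisely this wrapping phenomenon; the two ingredients that resolve it are the persistence of transversal intersections under $C^1$-limits and the discreteness of the length spectrum of a hyperbolic surface.
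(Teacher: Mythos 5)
Your proof is correct and follows essentially the same route as the paper's: both rest on the persistence of the (necessarily transverse) self-intersections of $c$ under small perturbation of the initial vector in $T^1(S)$. Your additional dichotomy on the lengths $\ell(c_n)$ addresses the wrapping subtlety explicitly, whereas the paper leaves it implicit; its argument is rescued instead by noting that the $p+1$ perturbed crossings occur at distinct points of $S$ (being close to the distinct double points of $c$), so they give distinct self-intersections of $c_n$ regardless of its length.
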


\begin{proof}
    Let $v$ be a point in $c$ and let $T$ be the first positive time such that $\psi_T(v) = v$. By assumption, there exist $p+1$ pairs of times $(t_i, t'_i) \in [0,T)^2$ with $t_i \neq t_i'$ such that $\psi_{t_i}(v)$ and $\psi_{t'_i}(v)$ have the same projection to $S$. Up to replacing the initial vector $v$ by another point on $c$, we can assume that none of the $t_i$ is $0$. The corresponding self-intersection in $S$ is transverse and thus stable by small perturbation. In particular, if $v_n$ converges to $v$, then for $n$ large enough, one can find times $(t_{i,n}, t'_{i,n}) \in (0,T)^2, 1\leqslant i\leqslant p+1$ such that $\psi_{t_{i,n}(v_n)}$ and $\psi_{t'_{i,n}}(v_n)$ have the same projection to $S$. In other words, for $n$ large enough, the geodesic arc $\psi_{(0,T)}(v_n)$ has at least $p+1$ self-intersections. In particular, $v$ cannot be approximated by points belonging to a closed geodesic with at most $p$ self-intersections, showing that $c\subset T^1(S) \setminus \mathcal F_p(S)$.
\end{proof}

Note that $\mathcal F_p(S) \subset \mathcal F_q(S)$ for $p\leqslant q$. In particular, all these subflows contain $\mathcal{F}_0(S)$, the closure of the union of all simple closed geodesics in $S$. This set has been studied by Birman and Series \cite{BS85}, who proved in particular that it has Hausdorff dimension $1$. It is thus a very ``small'' subset of the geodesic flow of $S$. We call it the \emph{Birman--Series set} of $S$.

\begin{definition}
    A representation $\rho:\pi_1(S)\to \SL(d,\Kb)$ is called \emph{simple $k$-Anosov} if $\rho$ is $k$-Anosov in restriction to $\mathcal{F}_0(\pi_1(S))$.
\end{definition}

\section{Construction of simple Anosov representations}\label{ConstructionSection}

We can now state more precisely the main result of the paper:
\begin{theorem} \label{thm: ExistenceSimpleAnosov}
    For every $p\geq 0$ and every $d\geq 2$, there exists a representation $\rho: \Gamma \to \SL(2d,\Cb)$ that is $d$-Anosov in restriction to $\mathcal F_p(\Gamma)$ but not in restriction to $\mathcal F_{p+1}(\Gamma)$.
\end{theorem}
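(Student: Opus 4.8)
The plan is to realize $\rho$ as a representation \emph{induced} from a geometrically finite but non-quasifuchsian representation of a finite-index subgroup. Fix a degree-$d$ cover $q\colon \widehat S\to S$ and write $\widehat\Gamma=\pi_1(\widehat S)\leqslant\Gamma$, of index $d$. Suppose we are given a simple closed curve $c$ on $\widehat S$ whose image $\delta=q(c)$ is a \emph{primitive} geodesic on $S$ with exactly $p+1$ self-intersections; the existence of such a pair $(\widehat S,c)$ is the combinatorial heart of the argument, discussed at the end. Since $c$ is simple, $\langle c\rangle$ is malnormal and quasiconvex in $\widehat\Gamma$, so $(\widehat\Gamma,\langle c\rangle)$ is a relatively hyperbolic pair (Theorem \ref{thm - Hyperbolic + Relatively Hyperbolic}). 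Pinching $c$ to a cusp produces a discrete faithful geometrically finite representation into $\PSL(2,\Cb)$ whose only parabolic conjugacy class is $\langle c\rangle$; lifting it to $\widehat\rho\colon\widehat\Gamma\to\SL(2,\Cb)$ and setting $\rho=\mathrm{Ind}_{\widehat\Gamma}^{\Gamma}\widehat\rho$ (twisted by a character so as to land in $\SL(2d,\Cb)$) gives the candidate representation, of rank $d = d\cdot 1$.

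For the positive direction I would argue entirely at the level of flows. By Proposition \ref{gfimpliesra} and Theorem \ref{relativeimplyflowanosov} (together with the identification of $\mathcal F(\widehat\Gamma,\langle c\rangle)$ with the geodesic flow of the convex core), $\widehat\rho$ is $1$-Anosov in restriction to $\mathcal F(\widehat\Gamma,\langle c\rangle)$. Proposition \ref{relofsubflows} furnishes a morphism $\sigma\colon\mathcal F(\widehat\Gamma)\setminus\bigcup_{\gamma}\gamma\mathcal F(\langle c\rangle)\to\mathcal F(\widehat\Gamma,\langle c\rangle)$, quasi-isometric on cocompact subflows, so by Proposition \ref{anosovthroughmorphism} it suffices to place $\mathcal F_p(\Gamma)$, viewed as a cocompact $\widehat\Gamma$-subflow of $\mathcal F(\widehat\Gamma)\cong\mathcal F(\Gamma)$, inside the complement of $\bigcup_{\gamma}\gamma\mathcal F(\langle c\rangle)$. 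That union is the set of axes of $\widehat\Gamma$-conjugates of powers of $c$; as elements of $\Gamma$ these are conjugate to powers of $\delta$, so the associated closed geodesics on $S$ are the iterates $\delta^{k}$, each with at least $p+1>p$ self-intersections. By Proposition \ref{p-SelfIntersections} they lie outside $\mathcal F_p(\Gamma)$, so the intersection is empty and $\widehat\rho$ is $1$-Anosov in restriction to $\mathcal F_p(\Gamma)$ as a $\widehat\Gamma$-flow. Finally, the flat bundle $E_\rho(\mathcal F_p(\Gamma))$ over $\Gamma\backslash\mathcal F_p(\Gamma)$ is the push-forward, along the $d$-fold covering $\widehat\Gamma\backslash\mathcal F_p(\Gamma)\to\Gamma\backslash\mathcal F_p(\Gamma)$, of $E_{\widehat\rho}(\mathcal F_p(\Gamma))$, compatibly with the flow; since the lifted flow is block-diagonal along the $d$ sheets, the rank-$1$ dominated splitting upstairs assembles into a rank-$d$ dominated splitting downstairs with uniform constants. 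Hence $\rho$ is $d$-Anosov in restriction to $\mathcal F_p(\Gamma)$.

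For the negative direction I would test domination on the periodic orbit $\delta$. As $c\in\widehat\Gamma$ represents $\delta$, the trivial coset is fixed by $\delta$, so the block of $\rho(\delta)$ attached to this length-one cycle is exactly $\widehat\rho(c)$, a parabolic element with both eigenvalue moduli equal to $1$. Because $\delta$ is primitive in $\Gamma$, the stabilizer of its axis is $\langle c\rangle$, so no other coset-cycle of $\delta$ meets the cusp; the remaining $2d-2$ eigenvalue moduli of $\rho(\delta)$ thus come from loxodromic elements of $\widehat\Gamma$ and split as $d-1$ moduli strictly above $1$ and $d-1$ strictly below $1$. Ordering all $2d$ moduli, the two $1$'s therefore sit precisely in positions $d$ and $d+1$, so there is no gap between the $d$-th and $(d+1)$-th moduli. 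Domination of a rank-$d$ splitting along a closed orbit forces such a gap; since $\delta$ has $p+1$ self-intersections and so lies in $\mathcal F_{p+1}(\Gamma)$, this shows $\rho$ is not $d$-Anosov in restriction to $\mathcal F_{p+1}(\Gamma)$.

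The main obstacle is the existence of the pair $(\widehat S,c)$ meeting the two numerical constraints \emph{simultaneously}: the cover must have degree exactly $d$ (this fixes the target $\SL(2d,\Cb)$ and the rank $d$), while $\delta=q(c)$ must have self-intersection number exactly $p+1$ (this places the first failure precisely at level $p+1$). The natural approach is to unwrap a self-intersecting curve in a cover: for example, in the double cover ($d=2$) with deck involution $\tau$, a simple closed curve $c$ with $c\neq\tau c$ projects to a primitive geodesic whose self-intersections biject with the $\tau$-orbits of $c\cap\tau c$, so one needs a simple $c$ with geometric intersection number $i(c,\tau c)=2(p+1)$; realizing a prescribed intersection number between $c$ and its deck translate, and the analogous bookkeeping for general $d$, is where the construction requires genuine care. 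One expects sufficient flexibility since the genus of $\widehat S$ grows with $d$, but pinning down the \emph{exact} self-intersection count for every pair $(d,p)$ is the delicate point, and I anticipate it to be the technical core of this section.
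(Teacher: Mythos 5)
Your construction is the same as the paper's: induce from a geometrically finite representation of a degree-$d$ cover pinched along a simple closed curve $c$ whose projection has $p+1$ self-intersections, run the chain Proposition \ref{gfimpliesra} $\Rightarrow$ Theorem \ref{relativeimplyflowanosov} $\Rightarrow$ Proposition \ref{relofsubflows} $\Rightarrow$ Proposition \ref{anosovthroughmorphism}, use Proposition \ref{p-SelfIntersections} to place $\mathcal F_p$ away from the pinched curve, and detect failure on $\mathcal F_{p+1}$ via the absence of a gap between the $d$-th and $(d+1)$-th eigenvalue moduli of the image of $c$. Your negative direction is fine (in fact you work harder than necessary: primitivity of $\delta$ and loxodromy of the other return blocks are irrelevant, since each $\SL(2,\Cb)$ block contributes an inverse pair of moduli, so one parabolic block already forces $\vert\lambda_d\vert=\vert\lambda_{d+1}\vert=1$).

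The one substantive gap is the sentence asserting that ``the rank-$1$ dominated splitting upstairs assembles into a rank-$d$ dominated splitting downstairs with uniform constants.'' A direct sum of dominated splittings is \emph{not} dominated in general: domination is a ratio condition between the stable and unstable directions of a single splitting, and after summing you must compare a stable vector in one sheet against an unstable vector in a \emph{different} sheet, which the individual domination inequalities do not control. This is exactly what Lemma \ref{toinducedrep} is for: because each summand is a rank-$1$ splitting of an $\SL(2,\Cb)$-bundle, one may choose a unit-volume metric making the two lines orthogonal (Remark \ref{rescalingnorm}), so that $\Vert\phi^t(v)\Vert\cdot\Vert\phi^t(w)\Vert$ is constant and domination upgrades to \emph{absolute} uniform contraction of $E^s$ and expansion of $E^u$; only then do the cross-sheet comparisons go through. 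You should also note that the paper takes the cover to be Galois precisely so that $\rho\vert_{\Gamma'}$ is block-diagonal and Proposition \ref{subgptogp}(2) applies; your push-forward-bundle picture can accommodate non-normal covers, but then you must redo the induction lemma in that generality rather than cite it. Finally, the existence of the pair $(\widehat S,c)$ with the exact degree and self-intersection count, which you rightly flag as the combinatorial input, is settled in the paper by an explicit picture (Figure \ref{GaloisCoveringFig1}) for a Galois cover, so it is less of an obstacle than you anticipate, but your proposal as written leaves it open.
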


By stability of the restricted Anosov property, we can deform such a representation in order to guarantee further generic properties. In particular, we have the following:
\begin{corollary} \label{c-GenericPrimitiveStable}
    For every $d\geq 2$, there exists a non-empty open set of $\Hom(\Gamma, \SL(2d,\Cb))$ consisting of Zariski dense representations that are either non-discrete or unfaithful.
\end{corollary}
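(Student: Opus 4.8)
The plan is to produce the open set by starting from the explicit representation $\rho_0:\Gamma \to \SL(2d,\Cb)$ provided by Theorem \ref{thm: ExistenceSimpleAnosov}, which is $d$-Anosov in restriction to $\mathcal F_p(\Gamma)$, and then exhibiting a neighborhood of $\rho_0$ in which the three desired features—simple $d$-Anosov behaviour, Zariski density, and failure of discreteness-or-faithfulness—can be arranged to hold simultaneously. The key structural input is Proposition \ref{stabilityofRA}: since $\mathcal F_p(\Gamma)$ is a cocompact $\Gamma$-flow, the set $A^d_{\mathcal F_p(\Gamma)}(\Gamma,\SL(2d,\Cb))$ of representations $d$-Anosov in restriction to $\mathcal F_p(\Gamma)$ is open in $\Hom(\Gamma,\SL(2d,\Cb))$. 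So $\rho_0$ has an open neighborhood $O$ consisting entirely of representations sharing the restricted-Anosov property, and it suffices to show that the two ``genericity'' conditions each fail only on a meager or otherwise non-obstructing subset of $O$.

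First I would deal with Zariski density. The set of representations whose image is \emph{not} Zariski dense is contained in a countable union of proper subvarieties of $\Hom(\Gamma,\SL(2d,\Cb))$: for each proper algebraic subgroup up to conjugacy (or more carefully, using that $\Gamma$ is finitely generated, each Zariski-closed condition ``the image lies in a proper closed subgroup'' is itself algebraic), the locus of representations landing in it is Zariski closed, hence closed of empty interior unless it is everything. The crucial point is that $\rho_0$ itself can be perturbed off any such subvariety—this follows because the character variety of a closed surface group into $\SL(2d,\Cb)$ is known to contain Zariski dense representations near any given point of the Anosov (or restricted-Anosov) locus, so the Zariski-dense representations form a dense open subset of $O$. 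I would invoke that the non-Zariski-dense locus is a proper closed subset (indeed contained in a countable union of proper subvarieties, using local rigidity/smoothness of the representation variety at irreducible points), so its complement in $O$ is open and dense.

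Next, the non-discrete-or-unfaithful condition. Here the strategy is to argue that a Zariski dense discrete \emph{and} faithful representation of a surface group into $\SL(2d,\Cb)$ cannot fill an open set once we have deformed $\rho_0$ appropriately; more directly, I would produce a single representation $\rho_1 \in O$ that is non-discrete (for instance, by a small deformation that makes the image contain an element conjugate arbitrarily close to a rotation of infinite order, or by deforming within a one-parameter family whose image is dense in a positive-dimensional compact subgroup) and simultaneously Zariski dense. By openness of the restricted-Anosov property and of Zariski density, a whole neighborhood of such a $\rho_1$ lies in $O$ and consists of Zariski dense representations; among these, the non-discrete ones form a set with non-empty interior because non-discreteness is itself an open condition on the subset of representations whose image is not contained in a discrete group. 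Taking the intersection with the open dense Zariski-dense locus yields the required non-empty open set.

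The main obstacle I anticipate is the non-discreteness step: openness of ``non-discrete'' is subtle, since being non-discrete is not straightforwardly an open condition in $\Hom(\Gamma,G)$ (a sequence of discrete faithful representations can limit to a non-discrete one and vice versa). The clean way around this, which I expect the authors to use, is to combine Zariski density with a rigidity/dichotomy: a Zariski dense representation of a surface group into $\SL(2d,\Cb)$ is either discrete-and-faithful (hence, being a quasi-isometric embedding, \emph{Anosov}) or it fails one of these, and the whole point of Theorem \ref{thm: ExistenceSimpleAnosov} is that $\rho_0$ lies in the boundary of—hence arbitrarily close to representations \emph{outside}—the Anosov locus. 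Thus I would argue that within $O$ the Anosov representations form a proper subset whose boundary $\rho_0$ meets, so that an open piece of $O$ lies outside the closure of discrete-faithful representations; every Zariski dense representation in that open piece is therefore forced to be either non-discrete or unfaithful. Making this ``outside the Anosov locus yet still simple-Anosov'' region genuinely open—rather than just a boundary point—is precisely where Theorem \ref{thm: ExistenceSimpleAnosov} (strict containment at $\rho_0$) together with the openness from Proposition \ref{stabilityofRA} does the work.
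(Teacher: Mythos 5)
Your overall architecture (openness of the restricted Anosov locus from Proposition \ref{stabilityofRA}, genericity of Zariski density, and then handling the non-discrete-or-unfaithful condition) matches the paper's, but the step where you actually produce a non-discrete or unfaithful representation inside the restricted-Anosov locus has a genuine gap. Your proposed ``clean way around'' rests on the dichotomy that a Zariski dense representation of a closed surface group into $\SL(2d,\Cb)$ is either discrete and faithful --- ``hence, being a quasi-isometric embedding, Anosov'' --- or fails one of these. That implication is false in two ways: discrete and faithful does not imply quasi-isometrically embedded (already for $\SL(2,\Cb)$ there are geometrically infinite, and geometrically finite non-convex-cocompact, discrete faithful surface groups), and in higher rank quasi-isometrically embedded does not imply Anosov. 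Indeed the representation $\rho_0$ of Theorem \ref{thm: ExampleSimpleAnosov} is itself discrete and faithful yet not $d$-Anosov, so ``lying outside the Anosov locus'' gives you no purchase on non-discreteness or unfaithfulness. Your alternative suggestion --- that non-discreteness is ``an open condition on the subset of representations whose image is not contained in a discrete group'' --- is circular, and your proposed construction of a non-discrete $\rho_1$ (an element close to an infinite-order rotation) is not carried out and is not obviously compatible with staying in the restricted-Anosov locus.

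The paper closes exactly this gap with two concrete inputs you are missing. First, by the Zassenhaus lemma the set $\Hom_{df}(\Gamma,\SL(2d,\Cb))$ of discrete and faithful representations is \emph{closed}, so its complement $\Hom_{ndf}$ is open; this is what makes ``non-discrete or unfaithful'' a stable condition. Second, to exhibit a point of $\Hom_{ndf}\cap \mathrm{An}^d_{\mathcal F_p}$, the paper goes back to the $\SL(2,\Cb)$ representation $\rho'$ of the finite-index subgroup $\Gamma'$: since $\rho'$ is geometrically finite but not convex-cocompact, Sullivan's stability theorem provides a sequence $\rho'_n\to\rho'$ of representations that are not discrete and faithful; inducing gives $\rho_n=\mathrm{Ind}^\Gamma_{\Gamma'}(\rho'_n)\to\rho_0$ in $\Hom_{ndf}$, and for large $n$ these lie in the open set $\mathrm{An}^d_{\mathcal F_p}$. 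Finally, the Zariski-dense locus is the complement of a proper subvariety (a single Zariski-closed set not containing any irreducible component), hence open and meeting every non-empty open set; your ``countable union of proper subvarieties'' would only give a dense $G_\delta$ complement, which would not suffice to extract a non-empty \emph{open} set of Zariski dense representations as the statement requires.
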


The rest of the section is devoted to the proof of these results.

\subsection{Geometrically finite representations of surface groups}
Let us start by recalling that there exist geometrically finite representations of closed surface groups with parabolic subgroups given by any prescribed simple closed curve. More precisely, let $S$ be a closed connected oriented surface of genus $g\geq 2$, $c$ a simple closed curve on $S$ and $\Pi = \langle \gamma \rangle$ the cyclic subgroup of $\Gamma = \pi_1(S)$ generated by an element of $\gamma$ representing $c$.

The following proposition is well-known to Kleinian group experts:

\begin{proposition} \label{existsgfrep}
    There exists $\rho: \Gamma \to \SL(2,\Cb)$ discrete and faithful with geometrically finite image and whose stabilizers of parabolic points are exactly the conjugates of $\Pi$.

    In other terms, there exists a relative $1$-Anosov representation of the relatively hyperbolic pair $(\Gamma,\Pi)$ into $\SL(2,\Cb)$.
\end{proposition}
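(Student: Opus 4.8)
The plan is to construct a geometrically finite Kleinian group abstractly isomorphic to $\Gamma$ in which the curve $c$ is ``pinched'' to a cusp, and then to read off the two assertions. Write $\gamma$ for the chosen element representing $c$ (which is essential since $\Pi=\langle\gamma\rangle$ is infinite cyclic) and distinguish two cases according to whether $c$ separates $S$. If $c$ is separating, cutting $S$ along $c$ yields two subsurfaces $\Sigma_1,\Sigma_2$ with one boundary circle each and genus $g_1,g_2\geq 1$, and the van Kampen theorem presents $\Gamma$ as the amalgamated free product $\pi_1(\Sigma_1)\ast_{\Pi}\pi_1(\Sigma_2)$, where $\Pi$ is the boundary subgroup of each factor. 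If $c$ is non-separating, cutting produces a single connected surface $\Sigma'$ of genus $g-1$ with two boundary circles $c_+,c_-$, and $\Gamma$ is the HNN extension $\pi_1(\Sigma')\ast_{\Pi}$ with stable letter $t$ satisfying $t\gamma_+ t^{-1}=\gamma_-$, the two boundary classes. In either case each piece has free fundamental group and negative Euler characteristic.

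First I would equip the interior of each cut piece with a complete, finite-area hyperbolic structure in which every boundary circle is a cusp; this is possible precisely because each piece has negative Euler characteristic. Such a structure realizes $\pi_1(\Sigma_i)$ (resp. $\pi_1(\Sigma')$) as a finite-covolume Fuchsian group $G_i\subset\PSL(2,\Rb)$ whose only parabolic conjugacy classes are those of the boundary curves. I would then place these Fuchsian groups in $\PSL(2,\Cb)$ on disjoint round circles and conjugate them so that the parabolic fixed points to be glued coincide and the corresponding parabolic subgroups agree. The Klein--Maskit combination theorem (amalgamation along a parabolic cyclic subgroup in the separating case, the HNN version along a parabolic in the non-separating case) then produces a discrete subgroup $G\subset\PSL(2,\Cb)$, isomorphic to the abstract amalgam/HNN extension, together with a fundamental domain assembled from those of the factors. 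The combination theorem further guarantees that $G$ is geometrically finite and that its parabolic subgroups are exactly the $G$-conjugates of the amalgamating parabolic, i.e. of the image of $\Pi$.

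By construction the isomorphism type of $G$ is the abstract amalgam/HNN extension, which is $\Gamma$, so we obtain a discrete faithful representation $\rho_0:\Gamma\to\PSL(2,\Cb)$ sending $\gamma$ to a parabolic and having parabolic stabilizers exactly the conjugates of $\Pi$. To upgrade $\rho_0$ to a representation into $\SL(2,\Cb)$, I would use that each cut piece has \emph{free} fundamental group, so its $\PSL(2,\Cb)$-representation lifts to $\SL(2,\Cb)$; choosing the lifts so that they agree on $\Pi$ (resp. are intertwined by a chosen lift of $t$) and invoking the universal property of the amalgamated product (resp. HNN extension) yields a lift $\rho:\Gamma\to\SL(2,\Cb)$ of $\rho_0$, again discrete, faithful and geometrically finite with the same parabolic subgroups. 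Finally, the ``in other terms'' statement is immediate: a geometrically finite subgroup of $\SL(2,\Cb)$ is hyperbolic relative to its parabolic stabilizers, and by Proposition \ref{gfimpliesra} the inclusion $\rho(\Gamma)\hookrightarrow\SL(2,\Cb)$, hence $\rho$ itself, is $1$-Anosov relative to $\Pi$.

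The main obstacle is the correct setup and verification of the combination theorem's hypotheses: one must position the Fuchsian pieces in $\PSL(2,\Cb)$ so that their fundamental domains interact only along the matched parabolic fixed point (the ``interactive pair'' / nesting conditions), so that the combined group is discrete, isomorphic to the prescribed amalgam, and introduces no parabolics other than those of $c$. An alternative route, avoiding the explicit combination, is to start from a Fuchsian representation $j$ and pinch along $c$: the standard theory of the boundary of quasifuchsian space (Bers, Maskit, Thurston) provides a geometrically finite ``cusped'' representation in which exactly $c$ becomes parabolic; establishing that this limit exists and is geometrically finite is, however, of comparable depth to the combination argument.
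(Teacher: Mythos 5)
Your construction is essentially the paper's: the authors also obtain $\rho$ from the Maskit combination theorem, as an amalgamated product over $\Pi$ when $c$ separates and an HNN extension when it does not (they simply cite Theorems 4.104--4.105 and Example 4.106 of Kapovich's book for the discreteness, geometric finiteness, and the identification of the parabolic subgroups), and the ``in other terms'' clause is, as you say, immediate from Proposition \ref{gfimpliesra}.

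The one place you genuinely diverge is the lift from $\PSL(2,\Cb)$ to $\SL(2,\Cb)$, and there your argument has a gap. You propose to lift each free factor separately and then ``choose the lifts so that they agree on $\Pi$'' --- but in the separating case there is no such choice: $\gamma$ is the boundary class of $\Sigma_i$, hence a product of commutators in $\pi_1(\Sigma_i)$, so every lift of $\rho_0|_{\pi_1(\Sigma_i)}$ (these differ by a character $\pi_1(\Sigma_i)\to\{\pm1\}$, which kills commutators) sends $\gamma$ to the \emph{same} element of $\SL(2,\Cb)$. The two factors thus each force a lift of $\rho_0(\gamma)$, and one must actually verify that these forced lifts coincide rather than differ by $-\Id$; this amounts to checking that the boundary trace of a complete cusped hyperbolic structure on a genus-$g_i$ surface with one puncture is $-2$ on both sides (equivalently, a relative Euler class computation). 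A similar sign verification is needed for the stable letter in the HNN case. The paper sidesteps all of this by invoking the known fact that discrete and faithful representations of closed surface groups into $\PSL(2,\Cb)$ have vanishing second Stiefel--Whitney class and therefore lift globally; you should either do the same or supply the trace computation.
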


Such representations can be constructed using the Maskit combination theorem for amalgamated products (when $\gamma$ is separating) and HNN extensions (see Theorem 4.104, Theorem 4.105 and Example 4.106 of Kapovich's book \cite{KapovichHyperbolicManifolds}). A priori, the representations obtained take values in $\Isom_+(\Hb^3)\simeq \PSL(2,\Cb)$. However, discrete and faithful representations of surface groups into $\PSL(2,\Cb)$ have vanishing second Stiefel--Whitney class and can thus be lifted to $\SL(2,\Cb)$.

\subsection{The induced representation}
Let us now recall classical construction of an \emph{induced representation} from a finite index subgroup to the whole group.

Let $\Gamma$ be a countable group and $\Gamma'$ be a subgroup of $\Gamma$ of finite index. Let $V$ be a finite dimensional complex vector space and let $\rho:\Gamma'\to \GL(V)$ be a linear representation. Recall that in representation theory of groups, giving such representation $\rho$ is equivalent to equipping $V$ with a structure of $\Cb[\Gamma']$-module. The induced representation $\mathrm{Ind}^{\Gamma}_{\Gamma'}(\rho)$ of $\Gamma$ is the representation associated to the $\Cb[\Gamma]$-module structure of $\Cb[\Gamma]\otimes_{\Cb[\Gamma']} V$. If $\Gamma'$ has index $d$ in $\Gamma$, then $\Cb[\Gamma]$ is a free $\Cb[\Gamma']$-module of rank $d$, hence $\mathrm{Ind}^{\Gamma}_{\Gamma'}(\rho)$ is a representation of rank $d\cdot \mathrm{dim}_{\Cb}(V)$.

In more concrete terms, pick a collection $\{\gamma_1=id, \gamma_2, ..., \gamma_d\}\subset \Gamma$ of representatives of left cosets of $\Gamma'$, so that \[\Gamma=\bigsqcup_{i=1}^d\gamma_i\Gamma'~.\] 
The $\Cb[\Gamma]$-module $\Cb[\Gamma]\otimes_{\Cb[\Gamma']} V$ can be identified with $\oplus_{i=1}^d\gamma_iV$, where each $\gamma_i V$ is a copy of $V$. For any $1\leqslant i \leqslant d$, we denote the copy of $v\in V$ in $\gamma_i V$ by $(\gamma_iv)$. Then the induced $\Gamma$-action is defined by $\gamma(\gamma_iv) = (\gamma_j(\gamma'v))$ for any $\gamma\in \Gamma$, where $\gamma_j$ and $\gamma'\in \Gamma'$ are such that $\gamma\gamma_i=\gamma_j\gamma'$.

In particular, when $\Gamma'$ is a normal subgroup in $\Gamma$, the restriction of $\mathrm{Ind}^{\Gamma}_{\Gamma'}(\rho)$ to $\Gamma'$ is precisely $\oplus_{i=1}^d\rho_i$, where $\rho_i$ is the representation of $\Gamma'$ defined by $\rho_i(\gamma) = \rho(\gamma_i^{-1}\gamma \gamma_i)$ for all $\gamma\in \Gamma'$.

\begin{lemma}\label{toinducedrep}
    Let $Y$ be a cocompact $\Gamma$-flow. If $\Gamma'\subset \Gamma$ is a normal subgroup of index $d$ and $\rho: \Gamma' \to \SL(2,\mathbb C)$ is $1$-Anosov in restriction to $Y$, then $\mathrm{Ind}^{\Gamma}_{\Gamma'}(\rho): \Gamma\to \SL(2d,\mathbb C)$ is $d$-Anosov in restriction to $Y$.
\end{lemma}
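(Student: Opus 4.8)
The plan is to reduce the statement to the restriction to $\Gamma'$ and then exploit that $\mathrm{Ind}_{\Gamma'}^{\Gamma}(\rho)|_{\Gamma'}$ splits as a direct sum of $\SL(2,\Cb)$-representations. Since $(Y,\phi)$ is cocompact and $\Gamma'$ has finite index $d$, Proposition~\ref{subgptogp}(2) guarantees that $\mathrm{Ind}_{\Gamma'}^{\Gamma}(\rho)$ is $d$-Anosov in restriction to $Y$ as soon as its restriction to $\Gamma'$ is. As recalled just before the statement, for $\Gamma'$ normal this restriction is $\bigoplus_{i=1}^{d}\rho_i$ with $\rho_i(\gamma)=\rho(\gamma_i^{-1}\gamma\gamma_i)$. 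So the whole problem becomes: produce a dominated splitting of rank $d$ for $\bigoplus_{i=1}^{d}\rho_i$ over $\Gamma'\backslash Y$.

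First I would check that each $\rho_i$ is $1$-Anosov in restriction to $Y$ with domination constants independent of $i$. Note that $\rho_i=\rho\circ c_{\gamma_i}$, where $c_{\gamma_i}$ is the inner automorphism $\gamma\mapsto\gamma_i^{-1}\gamma\gamma_i$ of $\Gamma'$. The map $\Theta_i\colon Y\times\Cb^2\to Y\times\Cb^2$, $(y,v)\mapsto(\gamma_i^{-1}y,v)$, satisfies $\Theta_i(\gamma\cdot_{\rho_i}(y,v))=(\gamma_i^{-1}\gamma\gamma_i)\cdot_{\rho}\Theta_i(y,v)$ for $\gamma\in\Gamma'$, covers the base map $y\mapsto\gamma_i^{-1}y$ (which descends to a flow-commuting homeomorphism of $\Gamma'\backslash Y$), and acts as the identity on fibers. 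Hence $\Theta_i$ is an isomorphism of flat flow-bundles $E_{\rho_i}(Y)\to E_{\rho}(Y)$ with \emph{no} time distortion, so pulling back the dominated splitting $E^s_\rho\oplus E^u_\rho$ and the metric of $\rho$ gives a rank-$1$ dominated splitting $E^s_i\oplus E^u_i$ of $E_{\rho_i}(Y)$ with the same constants $C,\lambda$. The pulled-back metric is again of unit volume (Remark~\ref{rescalingnorm}), and by compactness of $\Gamma'\backslash Y$ the angle between $E^s_i$ and $E^u_i$ is bounded below uniformly in $i$.

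The heart of the argument, and the step I expect to be the real obstacle, is that a naive direct sum of dominated splittings need not be dominated: the stable expansion of one block can exceed the unstable expansion of another. One must therefore convert the \emph{relative} bound $\alpha_i/\beta_i\leq Ce^{-\lambda t}$ (where $\alpha_i,\beta_i$ denote the factors by which $\phi^t$ expands the lines $E^s_i,E^u_i$, recalling that $\phi$ acts trivially on fibers in the trivialization) into \emph{absolute} exponential decay of $\alpha_i$ and growth of $\beta_i$. This is exactly where the $\SL(2,\Cb)$ structure enters. Writing the unit-volume metric as $\|A(\cdot)\,\cdot\,\|_0$ with $A$ valued in $\SL(2,\Cb)$, the Hermitian area $\|v\|_x\|w\|_x\sin\theta_x=|\det(v\,|\,w)|$ of spanning vectors $v\in E^s_i$, $w\in E^u_i$ is independent of $x$; since the flow fixes $v,w$, this gives
\[\alpha_i\beta_i=\frac{\sin\theta_x}{\sin\theta_{\phi^t x}}\in[1/K,K]\]
for a uniform $K\geq1$ coming from the angle bound. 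Multiplying and dividing this with $\alpha_i/\beta_i\leq Ce^{-\lambda t}$ yields $\alpha_i\leq\sqrt{KC}\,e^{-\lambda t/2}$ and $\beta_i\geq(KC)^{-1/2}e^{\lambda t/2}$, uniformly in $i$.

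Finally I would assemble the splitting. Equip $\Cb^{2d}=\bigoplus_i\Cb^2$ with the orthogonal direct sum of the metrics above and set $E^s=\bigoplus_i E^s_i$ and $E^u=\bigoplus_i E^u_i$; these are $\phi$-invariant, invariant under $\bigoplus_i\rho_i$, and of rank $d$. Since the flow preserves each summand and the metric is orthogonal, the top stable expansion of $E^s$ is $\max_i\alpha_i$ and the bottom unstable expansion of $E^u$ is $\min_i\beta_i$, so the uniform bounds of the previous step give
\[\frac{\|\phi^t v\|/\|v\|}{\|\phi^t w\|/\|w\|}\ \leq\ \frac{\max_i\alpha_i}{\min_i\beta_i}\ \leq\ KC\,e^{-\lambda t}\]
for all nonzero $v\in E^s$, $w\in E^u$. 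Thus $\bigoplus_i\rho_i$ is $d$-Anosov in restriction to $Y$, and Proposition~\ref{subgptogp}(2) lets us conclude that $\mathrm{Ind}_{\Gamma'}^{\Gamma}(\rho)$ is $d$-Anosov in restriction to $Y$.
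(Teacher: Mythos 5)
Your proof is correct and follows essentially the same route as the paper: reduce to $\Gamma'$ via Proposition \ref{subgptogp}(2), pull back the rank-one splitting by the coset representatives, use the unit-volume ($\SL(2,\Cb)$) structure to upgrade the relative domination to absolute contraction and expansion with uniform constants, and take the orthogonal direct sum. The only cosmetic difference is that the paper normalizes the metric so that $E^s_\rho$ and $E^u_\rho$ are orthogonal (making $\Vert\phi^t v\Vert\cdot\Vert\phi^t w\Vert$ exactly constant), whereas you keep a general unit-volume metric and control the angle by cocompactness; both are valid.
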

\begin{proof}
    Since $\rho: \Gamma' \to \SL(2,\mathbb C)$ is $1$-Anosov in restriction to $Y$, there exists a dominated splitting of rank $1$, denoted by $Y\times \Cb^2 = E^s_{\rho} \oplus E^u_{\rho}$, where $E^s_{\rho}$ is the stable direction and $E^u_{\rho}$ is the unstable direction, with respect to a $\rho$-invariant metric $\Vert \cdot \Vert$ on $Y\times \Cb^2$. Following Remark \ref{rescalingnorm}, we may assume that $\Vert \cdot \Vert$ is of unit volume and that $E^s_{\rho}$ and $E^u_{\rho}$ are orthogonal with respect to $\Vert \cdot \Vert$. 
    
    Fix $y\in Y$, $v\in (E^s_{\rho})_y$ and $w\in (E^u_{\rho})_y$. Since both $E^s_{\rho}$ and $E^u_{\rho}$ have rank $1$, the previous conditions on $\Vert \cdot \Vert$ imply that the product $\Vert \phi^t(v)\Vert \cdot \Vert \phi^t(w) \Vert$ is constant, and the dominated splitting condition then gives us constants $C,\lambda >0$ (independent of $y,v,w$), such that 
    
    \begin{equation} \label{eq: improved domination condition}\Vert \phi^t(v)\Vert \leqslant C e^{-\lambda t}\Vert v\Vert\text{ and } \Vert \phi^t(w)\Vert \geqslant C e^{\lambda t}\Vert w\Vert~,\end{equation}
    for all $t\in\Rb_+$. This will imply that the direct sum of several such dominated splittings is again a dominated splitting.

    Let $\{\gamma_1= \id, \gamma_2, \ldots, \gamma_d\}$ be a collection of representatives of the left cosets of $\Gamma'\subset \Gamma$. For each $i\in \{1,\ldots ,d\}$, we define the subbundles $E^s_i$ and $E^u_i$ of $Y\times \Cb^2$ by \[(E^s_i)_y=(E^s_{\rho})_{\gamma_i^{-1}y}~, \quad (E^u_i)_y=(E^u_{\rho})_{\gamma_i^{-1}y}~.\]
    
    The splitting $E^s_i\oplus E^u_i$ is the pull-back of the splitting $E^s_{\rho}\oplus E^u_\rho$ by $\gamma_i^{-1}$ acting on $Y$. One can easily show that $Y\times \Cb^2 = E^s_i\oplus E^u_i$ is a $\phi$-invariant and $\rho_i$-equivariant splitting, where $\rho_i(\gamma) = \rho(\gamma_i^{-1}\gamma \gamma_i)$ for any $\gamma\in \Gamma'$.
    Moreover, this splitting is dominated with respect to the pull-back metric ${\gamma_i^{-1}}^*\Vert \cdot \Vert$.
    
   Consider $Y\times \Cb^{2d} = \bigoplus_{i=1}^d Y\times \Cb^2$, the direct sum of $d$ copies of a the trivial rank $2$ bundle, where the $i$. Setting $F^s = \oplus_{i=1}^d E^s_i$ and $F^u = \oplus_{i=1}^d E^u_i$, on obtains a rank $d$ splitting which is equivariant for the representation $\oplus_{i=1}^d \rho_i$. By \eqref{eq: improved domination condition}, this splitting is dominated (for the metric $\oplus_{i=1}^d {\gamma_i}^* \Vert \cdot \Vert$, for instance). Then we conclude that $\oplus_{i=1}^d \rho_i= \Gamma'\to \SL(2d,\Cb)$ is $d$-Anosov in restriction to $Y$. Since $\oplus_{i=1}^d \rho_i$ is the restriction to $\Gamma'$ of $\mathrm{Ind}^{\Gamma}_{\Gamma'}(\rho)$, we conclude that $\mathrm{Ind}^{\Gamma}_{\Gamma'}(\rho)$ is $d$-Anosov in restriction to $Y$ by Proposition \ref{subgptogp}.
\end{proof}

\subsection{Construction of simple Anosov representations}
We now have all the tools to prove Theorem \ref{thm: ExistenceSimpleAnosov}.

Let $S = \Gamma \setminus \Hb^2$ be a closed connected oriented hyperbolic surface. Fix $p\geq 0$ and $d\geq 2$. 

\begin{proposition}
    There exists a Galois covering $\pi:\widetilde{S} \to S$ of degree $d$ and a simple closed geodesic $c \in \widetilde{S}$ such that $\pi(\gamma)$ has $p+1$ self-intersections.
\end{proposition}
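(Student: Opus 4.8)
The plan is to realize $\pi(c)$ as a curve on $S$ with exactly $p+1$ transverse self-crossings by building the simple geodesic $c$ directly on a cyclic cover and transporting the count downstairs through the deck group. The bookkeeping principle I would establish first is the following: if $\pi\colon \widetilde S \to S$ is a regular cover with deck group $G$, and $c \subset \widetilde S$ is a simple closed geodesic whose $G$--translates $\{gc\}_{g \in G}$ are pairwise distinct, whose class is primitive in $\Gamma$, and which is in general position (no triple points downstairs), then the number of self-intersections of $\pi(c)$ equals $\tfrac12\sum_{g \in G \smallsetminus \{e\}} i(c,gc)$, where $i(\cdot,\cdot)$ denotes the geometric intersection number on $\widetilde S$. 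Indeed, a self-crossing of $\pi(c)$ is a pair of distinct points $x \neq y$ of $c$ with $\pi(x)=\pi(y)$, i.e. $y = gx$ for a unique $g \neq e$; such pairs are exactly the points of $c \cap gc$ (as $c$ is embedded), and the involution $g \leftrightarrow g^{-1}$, $x \leftrightarrow y$ accounts for the factor $\tfrac12$. I would prove this as a preliminary lemma; it reduces the proposition to producing a simple closed geodesic on a degree-$d$ regular cover whose weighted translate-intersection sum equals $2(p+1)$.

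For the cover I would take the cyclic $\Zb/d$--cover $\pi\colon \widetilde S \to S$ dual to a non-separating simple closed geodesic $a \subset S$, i.e. the one associated to $\phi\colon \Gamma \to \Zb/d$, $\phi(\cdot)=\hat\imath(a,\cdot)\bmod d$ (surjective since $a$ is non-separating). This cover is Galois of degree $d$ with deck group $\Zb/d = \langle\tau\rangle$, and it is conveniently described as $d$ copies $\Sigma^{(0)},\dots,\Sigma^{(d-1)}$ of the cut surface $\Sigma = S \smallsetminus a$ glued cyclically, with $\tau$ the shift $\Sigma^{(i)}\to\Sigma^{(i+1)}$ and the lifts $a_0,\dots,a_{d-1}$ of $a$ as the gluing circles.

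Next I would build $c$ as an embedded loop meeting the single curve $a_1$ transversally in exactly two points: concretely $c = \delta_0 \cup \delta_1$, where $\delta_0 \subset \Sigma^{(0)}$ and $\delta_1 \subset \Sigma^{(1)}$ are embedded arcs with endpoints on $a_1$, glued into a simple closed curve. Because $c$ is supported on two adjacent sheets, its translate $\tau^j c$ is disjoint from $c$ unless $j=\pm 1$, and the surviving intersections occur one sheet at a time; a direct inspection gives $i(c,\tau c)=i(c,\tau^{-1}c)=i(\delta_1,\tau\delta_0)$ (computed inside a single sheet, where $\tau$ is the canonical identification of copies of $\Sigma$). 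I would then choose $\delta_0,\delta_1$ so that $\delta_1$ and the parallel copy $\tau\delta_0$ cross exactly $p+1$ times -- an easy matter, since two embedded arcs in $\Sigma$ with endpoints on prescribed boundary circles can be made to cross any prescribed number of times. By the bookkeeping lemma this yields $\tfrac12\sum_{g\ne e} i(c,gc) = p+1$, so $\pi(c)$ has exactly $p+1$ self-intersections, and passing to geodesic representatives (using that $\tau$ is an isometry, so geometric intersection numbers are unchanged) turns $c$ into the desired simple closed geodesic. The case $d=2$ is identical except that $c$ now also meets $\tau c$ across the second gluing circle and the count distributes over both sheets; the resulting self-intersection number is again $p+1$.

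The step I expect to be most delicate is making the intersection count exactly $p+1$ and geometric at the same time: I must produce the arcs $\delta_0,\delta_1$ in minimal position (no bigons between $\delta_1$ and $\tau\delta_0$), so that the $p+1$ crossings survive in the geodesic representatives and are not reduced by homotopy. Alongside this I would discharge the genericity hypotheses of the bookkeeping lemma: that $c$ is simple (automatic, since it is embedded by construction), that its translates are pairwise distinct and its class primitive in $\Gamma$ (arranged by a generic choice of the arcs, ruling out that $\pi(c)$ is a proper power), and that no two self-crossings of $\pi(c)$ collide into a triple point (which holds here because the crossings lie in a single sheet, on which $\pi$ is injective). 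Keeping simplicity of $c$ free of charge is exactly why I would argue on the cover rather than trying to exhibit the self-crossing curve downstairs and lift it.
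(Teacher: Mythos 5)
Your proposal is correct and follows essentially the same route as the paper, which in fact only exhibits such a pair $(\widetilde S, c)$ by a figure (a cyclic cover with a simple curve supported on adjacent sheets whose projection acquires the prescribed crossings). Your bookkeeping lemma $\tfrac12\sum_{g\neq e} i(c,gc)$ and the explicit choice of arcs $\delta_0,\delta_1$ supply the rigorous count that the paper leaves to the picture.
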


An example of such a pair $(\widetilde{S},c)$ is shown in Figure \ref{GaloisCoveringFig1}. 

\begin{figure}
    \include{Figure1.tex}
    \caption{}
    \label{GaloisCoveringFig1}
\end{figure}

    Let $\Gamma'\subset \Gamma$ be the fundamental group of $\widetilde{S}$ and let $\Pi$ be the cyclic subgroup generated by a representative $\gamma$ of $c$ in $\Gamma'$. By Proposition \ref{existsgfrep}, there exists a representation $\rho':\Gamma'\to \SL(2,\Cb)$ which is $1$-Anosov relative to $\Pi$. We set 
    \[\rho = \mathrm{Ind}^\Gamma_{\Gamma'}(\rho'): \Gamma\to \SL(2d,\Cb)~.\]
    
\begin{theorem} \label{thm: ExampleSimpleAnosov}
    The representation $\rho$ is $d$-Anosov in restriction to $\mathcal F_p$ and not $d$-Anosov in restriction to $\mathcal F_{p+1}$.
\end{theorem}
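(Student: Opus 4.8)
The plan is to prove the two halves separately, exploiting that $\Gamma'$ is normal of index $d$ in $\Gamma$, so that $\rho=\mathrm{Ind}^\Gamma_{\Gamma'}(\rho')$ restricts on $\Gamma'$ to $\bigoplus_{i=1}^d \rho_i$ with $\rho_i(\gamma')=\rho'(\gamma_i^{-1}\gamma'\gamma_i)$ and $\gamma_1=\id$, and that the peripheral generator $\rho'(\gamma)$ is parabolic in $\SL(2,\Cb)$ by construction (Proposition \ref{existsgfrep}). The geometric key underlying both halves is the identification of $\bigcup_{\gamma'\in\Gamma'}\gamma'\mathcal{F}(\Pi)$, the $\Gamma'$-orbit of the axis of $\gamma$, with the full preimage in $\mathcal{F}(\Gamma')=\mathcal{F}(\Gamma)$ of the closed geodesic $\pi(c)\subset S$; since $\pi(c)$ has exactly $p+1$ self-intersections, Proposition \ref{p-SelfIntersections} shows that this orbit lies in $\mathcal{F}_{p+1}\setminus\mathcal{F}_{p}$.

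For the positive statement, I would first record that $\rho'$, being $1$-Anosov relative to $\Pi=\langle\gamma\rangle$, is $1$-Anosov in restriction to the relative geodesic flow $\mathcal{F}(\Gamma',\Pi)$ (Theorem \ref{relativeimplyflowanosov}, together with the identification of $\mathcal{F}(\Gamma',\Pi)$ with the geodesic flow of the convex core in $\Hb^3$ discussed after Proposition \ref{gfimpliesra}). As $\gamma$ is simple, $\Pi$ is a quasiconvex, almost malnormal subgroup of the surface group $\Gamma'$, so Proposition \ref{relofsubflows} provides a morphism $\sigma:\mathcal{F}(\Gamma')\setminus\bigcup_{\gamma'\in\Gamma'}\gamma'\mathcal{F}(\Pi)\to\mathcal{F}(\Gamma',\Pi)$ that is quasi-isometric in restriction to every cocompact $\Gamma'$-subflow. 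By the geometric key, $\mathcal{F}_p$ is a cocompact $\Gamma'$-subflow contained in the domain of $\sigma$; applying Proposition \ref{anosovthroughmorphism} to pull back the dominated splitting of $E_{\rho'}(\mathcal{F}(\Gamma',\Pi))$ then shows that $\rho'$ is $1$-Anosov in restriction to $\mathcal{F}_p$ viewed as a $\Gamma'$-flow. Finally Lemma \ref{toinducedrep} upgrades this to the conclusion that $\rho=\mathrm{Ind}^\Gamma_{\Gamma'}(\rho')$ is $d$-Anosov in restriction to $\mathcal{F}_p$.

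For the negative statement, I would argue by contradiction: if $\rho$ were $d$-Anosov in restriction to $\mathcal{F}_{p+1}$, then the axis of $\gamma$ projects to a closed orbit of period $T$ in the compact quotient $\Gamma\backslash\mathcal{F}_{p+1}$, along which the dominated splitting forces a uniform exponential gap $\sigma_d(\rho(\gamma)^n)/\sigma_{d+1}(\rho(\gamma)^n)\geqslant C^{-1}e^{\lambda T n}$ between consecutive singular values of the holonomy. But $\rho(\gamma)=\bigoplus_{i=1}^d\rho'(\gamma_i^{-1}\gamma\gamma_i)$ is block diagonal with each block in $\SL(2,\Cb)$, hence each block contributes one singular value $\geqslant 1$ and one $\leqslant 1$, so the top $d$ singular values of $\rho(\gamma)^n$ are exactly the block maxima and the bottom $d$ the block minima. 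In particular $\sigma_d(\rho(\gamma)^n)/\sigma_{d+1}(\rho(\gamma)^n)\leqslant \sigma_1(\rho'(\gamma)^n)/\sigma_2(\rho'(\gamma)^n)$, and since $\rho'(\gamma)$ is parabolic one has $\sigma_1(\rho'(\gamma)^n)\asymp n$ and $\sigma_2(\rho'(\gamma)^n)\asymp n^{-1}$, so this ratio grows only polynomially, contradicting the exponential lower bound.

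The main obstacle I anticipate is the geometric key rather than the analytic machinery: one must verify carefully that the $\Gamma'$-orbit of the $\gamma$-axis maps onto $\pi(c)$ in $T^1 S$ and that $\pi(c)$ realizes precisely $p+1$ self-intersections, so that $\mathcal{F}_p$ genuinely avoids the cusp locus $\bigcup_{\gamma'}\gamma'\mathcal{F}(\Pi)$ while $\mathcal{F}_{p+1}$ contains it. Granting this, the pullback of dominated splittings, the induced-representation lemma, and the parabolic singular-value estimate are all routine consequences of results already established.
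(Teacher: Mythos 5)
Your argument is correct and follows essentially the same route as the paper's proof: relative Anosovness of $\rho'$ is transported through the morphism of Proposition \ref{relofsubflows} to the cocompact $\Gamma'$-subflow $\mathcal F_p$, which avoids the preimage of $\pi(c)$ by Proposition \ref{p-SelfIntersections}, and then Lemma \ref{toinducedrep} induces up to $\Gamma$. For the negative half the paper argues with eigenvalues (the block $\rho'(\gamma)$ is parabolic, so $|\lambda_d(\rho(\gamma))|=|\lambda_{d+1}(\rho(\gamma))|=1$ and $\rho(\gamma)$ is not $d$-proximal) rather than with singular values of powers, but your polynomial-growth estimate for $\sigma_d/\sigma_{d+1}$ is an equivalent quantitative form of the same obstruction.
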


\begin{proof}
    Denote by $Y \subset T^1(\Hb^2)$ the preimage of $T^1(S) \setminus \pi(c)$. It is an open $\Gamma$-subflow of $T^1(\Hb^2)$. 
    
    By Proposition \ref{relofsubflows} and Corollary \ref{relativeimplyflowanosov}, the representation $\rho'$ is $1$-Anosov in restriction to any cocompact subflow of $Y$. By Lemma \ref{toinducedrep}, the induced representation $\rho$ is $d$-Anosov in restriction to any cocompact subflow of $Y$. Finally, by Proposition \ref{p-SelfIntersections}, the curve $c$ is disjoint from $\mathcal F_p(S)$. Hence $\mathcal F_p(\Gamma)$ is a cocompact subflow of $Y$, and $\rho$ is $d$-Anosov in restriction to $\mathcal F_p$.

    On the other hand, $\rho(\gamma)$ is a direct sum of $d$ matrices in $\SL(2,\Cb)$, hence its eigenvalues have the form $\lambda_1, \ldots, \lambda_d, \lambda_d^{-1}, \ldots, \lambda_1^{-1}$, with $\vert \lambda_1\vert \geq \ldots \geq \vert \lambda_d \vert \geq 1$. One of these matrices is $\rho'(\gamma)$ which has eigenvalues $\pm 1$. We deduce that $\vert \lambda_d\vert =1 = \vert \lambda_d^{-1}\vert$ and $\rho(\gamma)$ is not $d$-proximal. Since $\pi(c) \subset \mathcal F_{p+1}(S)$, this implies that $\rho$ is not $d$-Anosov in restriction to $\mathcal F_{p+1}(S)$.

    This concludes the proof of Theorem \ref{thm: ExampleSimpleAnosov}, hence that of Theorem \ref{thm: ExistenceSimpleAnosov}.
\end{proof}

\subsection{Deformations and generic simple Anosov representations}

In this section, we consider small deformations of the above constructed representation to deduce Corollary \ref{c-GenericPrimitiveStable}.

Let $\Hom_{df}(\Gamma,\SL(n,\Cb)) \subset \Hom(\Gamma,\SL(n,\Cb))$ denote the set of discrete and faithful representations and $\Hom_{ndf}(\Gamma,\SL(n,\Cb))$ its complement. As a consequence of the Zassenhaus lemma, $\Hom_{df}(\Gamma,\SL(n,\Cb))$ is closed, hence $\Hom_{ndf}(\Gamma,\SL(n,\Cb))$ is open in $\Hom(\Gamma,\SL(n,\Cb))$.

Let $\Hom^Z(\Gamma,\SL(n,\Cb)) \subset \Hom(\Gamma,\SL(n,\Cb))$ denote the set of representations with Zariski dense image. It is a Zariski open subset of the representation variety which intersects every irreducible component (see for instance \cite{LabourieBook}).

Finally, denote by $\mathrm{An}^d_{\mathcal F_p}(\Gamma, \SL(2d,\Cb))$ the set of representations that are $d$-Anosov in restriction to $\mathcal F_p$, which is open by Proposition \ref{stabilityofRA}. Corollary \ref{c-GenericPrimitiveStable} admits the following reformulation:

\begin{proposition}
    The intersection \[\Hom_{ndf}(\Gamma,\SL(2d,\Cb)) \cap \Hom^Z(\Gamma,\SL(2d,\Cb)) \cap \mathrm{An}^d_{\mathcal F_p}(\Gamma, \SL(2d,\Cb))\]
    is non-empty.
\end{proposition}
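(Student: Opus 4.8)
The plan is to start from the induced representation $\rho=\mathrm{Ind}^{\Gamma}_{\Gamma'}(\rho')$ constructed above, which by Theorem~\ref{thm: ExampleSimpleAnosov} lies in the open set $U:=\mathrm{An}^d_{\mathcal F_p}(\Gamma,\SL(2d,\Cb))$ (open by Proposition~\ref{stabilityofRA}), and to produce inside $U$ a Zariski dense, non-discrete representation. I would first dispatch Zariski density softly: since $\Hom^Z(\Gamma,\SL(2d,\Cb))$ is the complement of a proper Zariski-closed subset meeting no irreducible component, that subset is nowhere dense for the analytic topology on each component, so \emph{every} nonempty Euclidean-open subset of $\Hom(\Gamma,\SL(2d,\Cb))$ meets $\Hom^Z$. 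It therefore suffices to exhibit a nonempty open subset of $U$ contained in $\Hom_{ndf}$, and then intersect it with $\Hom^Z$.

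So the genuine task is to find, inside the open set $U$, a \emph{non-discrete} representation (non-discreteness being far more robust than unfaithfulness). Here I would use that the element $\gamma$ representing $c$ has a geodesic with $p+1$ self-intersections, hence lying outside $\mathcal F_p$ by Proposition~\ref{p-SelfIntersections}; consequently the restricted Anosov condition defining $U$ places no constraint on the conjugacy class of $\rho(\gamma)$, and we are free to deform it. Concretely, I would ``open the cusp'' of the seed representation $\rho'\colon\Gamma'\to\SL(2,\Cb)$ at $\gamma$: since $\mathrm{tr}\,\rho'(\gamma)=\pm2$ and the trace of $\gamma$ is a non-constant holomorphic function on the smooth character variety of $\Gamma'$ near the irreducible representation $\rho'$, there is a nearby deformation $\rho'_t$ with $\mathrm{tr}\,\rho'_t(\gamma)=2\cos\theta_t\in(-2,2)$, so that $\rho'_t(\gamma)$ is elliptic of angle $\theta_t$. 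Taking $\theta_t\notin\pi\Qb$ makes $\rho'_t(\gamma)$ an infinite-order element with relatively compact cyclic group, so $\rho'_t(\Gamma')$ is non-discrete; it stays Zariski dense (an open condition), and being non-discrete and Zariski dense in the simple group $\SL(2,\Cb)$ it is in fact \emph{dense} (its identity component is a nonzero ideal of $\mathfrak{sl}_2$, hence all of it). Setting $\rho_t:=\mathrm{Ind}^{\Gamma}_{\Gamma'}(\rho'_t)$, continuity of induction together with the openness of $U$ gives $\rho_t\in U$ for $t$ small.

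The main obstacle is proving that $\rho_t$ is itself non-discrete, which is delicate because $\rho_t$ restricts on $\Gamma'$ to $\bigoplus_{i=1}^d \rho'_t\circ c_i$ (with $c_i$ the conjugation by the coset representative $\gamma_i$), and a discrete subgroup of $\SL(2,\Cb)^d$ can project densely onto each factor. I would analyze $L:=\overline{\rho_t(\Gamma')}\le\SL(2,\Cb)^d$ through Goursat's lemma: each projection of $L$ is the dense subgroup $\rho'_t(\Gamma')$, so if $L$ had trivial identity component then, by simplicity of $\SL(2,\Cb)$, $L$ would be the graph of abstract isomorphisms between dense subgroups relating $\rho'_t$ to each deck-translate $\rho'_t\circ c_i$; by rigidity of abstract homomorphisms of $\SL(2,\Cb)$ such an isomorphism is continuous up to a field automorphism of $\Cb$, forcing $\rho'_t$ to be, up to conjugacy and a Galois twist, invariant under the deck group. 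This is a non-generic arithmetic coincidence that I would rule out by choosing the deformation with, say, $\cos\theta_t$ transcendental and incompatible with the fixed (loxodromic) traces of the remaining blocks. Hence $L$ has positive dimension, so $\rho_t(\Gamma')$—and a fortiori $\rho_t(\Gamma)$—is non-discrete, placing $\rho_t$ in the nonempty open set $U\cap\Hom_{ndf}$. Intersecting this open set with $\Hom^Z$ as above yields the desired Zariski dense, non-discrete, simple $d$-Anosov representation, and proves the proposition.
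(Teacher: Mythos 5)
Your outline agrees with the paper's: start from $\rho=\mathrm{Ind}^{\Gamma}_{\Gamma'}(\rho')$, perturb the geometrically finite seed $\rho'$ so that it leaves $\Hom_{df}$, induce, use openness of $\mathrm{An}^d_{\mathcal F_p}$ to stay in that set, and get Zariski density for free because $\Hom^Z$ is the complement of a nowhere dense subvariety. You also correctly isolate the one genuinely delicate point, which the paper passes over in a single sentence: why does $\mathrm{Ind}^{\Gamma}_{\Gamma'}(\rho'_t)\in\Hom_{ndf}$ follow from $\rho'_t\in\Hom_{ndf}$? As you observe, the restriction of the induced representation to $\Gamma'$ is $\bigoplus_i\rho'_t\circ c_i$, and a subgroup of $\SL(2,\Cb)^d$ can be discrete while projecting densely onto every factor, so non-discreteness of $\rho'_t$ does not obviously propagate.

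However, your resolution of that point has a genuine gap. Once $\rho'_t$ is faithful, $\rho_t(\Gamma')$ \emph{is automatically} (up to center) the graph of the abstract isomorphisms $\rho'_t(c_1(\eta))\mapsto\rho'_t(c_i(\eta))$ between dense subgroups, so the Goursat step extracts no contradiction by itself; everything rests on the claim that a discrete such graph forces these isomorphisms to be ``continuous up to a field automorphism of $\Cb$.'' That is a Borel--Tits--type statement, but Borel--Tits applies to homomorphisms defined on the full group of rational points (or on subgroups containing enough unipotents), not to abstract isomorphisms between arbitrary finitely generated dense subgroups; for a surface group, $\rho_2\circ\rho_1^{-1}$ between two dense faithful images need not be algebraic at all, and the Galois-graph lattices $g\mapsto(g,g^{\sigma})$ show that discrete subgroups of $G\times G$ with both projections dense do exist, so the implication you need is exactly the hard content and is not established by your transcendence condition. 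The fix is to take the \emph{other} branch of $\Hom_{ndf}$, which you dismiss as less robust but which is the one that propagates trivially through induction: open the cusp so that $\rho'_t(\gamma)$ is elliptic of finite order $k$. Then $\gamma^k\in\ker\rho'_t\neq\{1\}$, and
\[\ker\big(\mathrm{Ind}^{\Gamma}_{\Gamma'}(\rho'_t)\big)\cap\Gamma'=\bigcap_{i=1}^d\gamma_i(\ker\rho'_t)\gamma_i^{-1}\]
is a finite intersection of non-trivial normal subgroups of the surface group $\Gamma'$, hence non-trivial (two non-trivial normal subgroups of $\Gamma'$ with trivial intersection would commute elementwise, forcing one of them into a cyclic centralizer and giving $\Gamma'$ a finite-index cyclic subgroup, absurd). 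So the induced representation is unfaithful, hence lies in $\Hom_{ndf}$, and the rest of your argument goes through unchanged. With this substitution your proof is complete and matches the paper's; as written, the non-discreteness route is not justified.
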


\begin{proof}
In the previous section, we constructed a representation $\rho \in \mathrm{An}^d_{\mathcal F_p}(\Gamma, \SL(2d,\Cb)$ of the form $\mathrm{Ind}^\Gamma_{\Gamma'}(\rho')$ where $\Gamma'$ is a subgroup of $\Gamma$ of index $d$ and $\rho':\Gamma'\to \SL(2,\Cb)$ is discrete and faithful but not convex-cocompact. 

By Sullivan's stability theorem for Kleinian groups \cite{Sullivan}, there exists a sequence $\rho'_n \in \Hom_{\textit{ndf}}(\Gamma', \SL(2,\Cb))$ converging to $\rho'$.\footnote{In this precise case, there are more explicit ways to construct the sequence $\rho_n$ than to invoke Sullivan's stability.}

Then $\rho_n \equaldef \mathrm{Ind}^\Gamma_{\Gamma'}(\rho'_n)$ belongs to $\Hom_{\textit{ndf}}(\Gamma, \SL(2d,\Cb))$ and converges to $\rho$. For $n$ large enough, $\rho_n$ is $d$-Anosov in restriction to $\mathcal F_p$ and we conclude that 
\[\Hom_{\textit{ndf}}(\Gamma, \SL(2d,\Cb))\cap \mathrm{An}^d_{\mathcal F_p}(\Gamma, \SL(2d,\Cb))\neq \emptyset~.\]

Finally, since $\Hom^Z(\Gamma, \SL(2d,\Cb))$ is the complement of a subvariety of $\Hom(\Gamma, \SL(2d,\Cb))$ that does not contain an irreducible component, $\Hom^Z(\Gamma, \SL(2d,\Cb))$ intersects every non-empty open subset, hence 
\[\Hom^Z(\Gamma, \SL(2d,\Cb))\cap \Hom_{\textit{ndf}}(\Gamma, \SL(2d,\Cb))\cap \mathrm{An}^d_{\mathcal F_p}(\Gamma, \SL(2d,\Cb))\neq \emptyset~.\]
\end{proof}

\section{Mapping class group dynamics}\label{MCGSection}
In the final section of this paper, we introduce the action of the mapping class group $\Mod(S)$ on geodesic flows and character varieties. We remark that the subflows $\mathcal F_p(S)$ are ``invariant under the Mapping Class Group'', and that $\mathcal F_0(S)$ is the unique minimal subflow with this property. We then deduce that the domains of Anosov representations in restriction to $\mathcal F_p$ form domains of discontinuity for the mapping class group action on character varieties, among which the domains of Anosov representations in restriction to the Birman--Series flow are maximal.

\subsection{Mapping class group invariant closed subflows}
Let $\Gamma$ be a finitely generated hyperbolic group. Recall that every automorphism of $\Gamma$ is a quasi-isometry, and thus extends to a homeomorphism of $\partial_\infty \Gamma$. This defines an action of $\Aut(\Gamma)$ on $\partial_\infty \Gamma$. The restriction of this action to the inner automorphism group $\Inn(\Gamma)$ on $\partial_\infty \Gamma$ is the standard action of $\Gamma$ (i.e. it coincides with the action on the boundary extending left translations on the Cayley graph). 

The action of $\Aut(\Gamma)$ on the boundary at infinity naturally induces an action on $\partial_\infty^{(2)}\Gamma$. Now, every cocompact $\Gamma$-subflow of $\mathcal{F}(\Gamma)$, has the form $Y_P = P\times \Rb$, where $P$ is a closed, $\Gamma$-invariant subset of $\partial_\infty^{(2)}\Gamma$. Given  a subgroup $H$ of $\Out(\Gamma)$ with $\hat H$ its preimage in $\Aut(\Gamma)$, we say that a cocompact $\Gamma$-subflow $Y_P$ is $H$-invariant if $P$ is $\hat{H}$-invariant. This is well-defined since $P$ is always $\Inn(\Gamma)$-invariant by the definition.

\begin{example}
    Recall the notations of Section \ref{sadef},
    \begin{itemize}
        \item for $F_n$, the free group of order $n$, the primitive geodesic flow $\Fc_{Prim}$ is an $\Out(F_n)$-invariant $F_n$-subflow, since the set of primitive elements is $\Out(F_n)$-invariant;
        \item for a compact hyperbolic surface $S$ (possibly with boundary), the flow of geodesics with at most $p$ self-intersections $\mathcal{F}_p(S)$ is a $\Mod(S)$-invariant $\pi_1(S)$-subflow, where $\Mod(S)$ denotes the mapping class group of $S$.
\end{itemize}
\end{example}

In particular, the Birman--Series flow is $\Mod(S)$-invariant. Here we prove that it is the unique minimal $\Mod(S)$-invariant subflow.

\begin{theorem}
    Let $S$ be a closed connected hyperbolic surface, $H$ a finite index subgroup of $\Mod(S)$ and $Y$ a (non-empty) $H$-invariant cocompact subflow of $\mathcal F(\pi_1(S))$. Then $Y$ contains the Birman--Series set $\mathcal F_0(\pi_1(S))$.
\end{theorem}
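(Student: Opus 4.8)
The plan is to reduce the statement to a containment of closed subsets of $\partial_\infty^{(2)}\Gamma$ and then use pseudo-Anosov dynamics to push an arbitrary orbit onto the Birman--Series set. Writing $\Gamma = \pi_1(S)$, recall that every cocompact $\Gamma$-subflow of $\mathcal F(\Gamma)$ has the form $Y_P = P \times \Rb$ for a closed $\Gamma$-invariant $P \subseteq \partial_\infty^{(2)}\Gamma$, and that $Y_P$ is $H$-invariant precisely when $P$ is invariant under the preimage $\hat H \subseteq \Aut(\Gamma)$ of $H$. Let $P_0 \subseteq \partial_\infty^{(2)}\Gamma$ be the set of endpoint pairs of complete simple geodesics, so that $\mathcal F_0(\Gamma) = P_0 \times \Rb$ and, by Birman--Series, $P_0$ is the closure of the set of endpoint pairs of simple \emph{closed} geodesics. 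Under the hypotheses, $P$ is a nonempty closed set invariant under both $\Gamma$ and $\hat H$, and the goal becomes $P \supseteq P_0$.

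The engine is the following. Since $H$ has finite index, it contains pseudo-Anosov classes: any pseudo-Anosov $g \in \Mod(S)$ has a power $f = g^k \in H$, and $f$ is again pseudo-Anosov with the same invariant laminations. Fix such an $f$ and let $\Lambda^+_f$ be its unstable geodesic lamination, with $P_{\Lambda^+_f} \subseteq \partial_\infty^{(2)}\Gamma$ the closed, $\Gamma$-invariant set of endpoint pairs of its leaves; since the leaves of $\Lambda^+_f$ are simple, $P_{\Lambda^+_f} \subseteq P_0$. I claim $P \supseteq P_{\Lambda^+_f}$. Pick any $z \in P$ and set $C = \overline{\Gamma z} \subseteq P$, a closed $\Gamma$-invariant set. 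As $\hat f \in \hat H$ preserves $P$ and normalizes the $\Gamma$-action, the forward-orbit closure $P_f(C) := \overline{\bigcup_{n \geq 0} \hat f^n(C)}$ is a closed $\Gamma$-invariant subset of $P$. The key dynamical input is that $P_f(C)$ contains a leaf of $\Lambda^+_f$: recentering the $\hat f^n$-images of a single bi-infinite geodesic of $C$ by suitable elements of $\Gamma$ (using cocompactness of $\Gamma \curvearrowright \partial_\infty^{(2)}\Gamma$), the pseudo-Anosov expands along its unstable foliation and contracts transversally, so any recentered limit geodesic is aligned with the unstable foliation, i.e. is a leaf of $\Lambda^+_f$. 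Once one leaf lies in the closed $\Gamma$-invariant set $P_f(C)$, minimality of the lamination $\Lambda^+_f$ forces $\overline{\Gamma \cdot \mathrm{leaf}} = P_{\Lambda^+_f} \subseteq P_f(C) \subseteq P$, proving the claim.

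It then remains to let $f$ vary and take closures. The union $\bigcup_f P_{\Lambda^+_f}$ over all pseudo-Anosov $f \in H$ lies in $P_0$ and is invariant under $\Gamma$ and $H$; I would identify its closure with $P_0$ using two standard facts: the unstable laminations of pseudo-Anosovs in $H$ are dense in $\mathcal{PML}(S)$ (density holds for $\Mod(S)$ by Thurston and passes to the finite-index $H$, since each pseudo-Anosov has a power in $H$ with the same lamination), and simple closed geodesics are dense in $P_0$ by Birman--Series. Approximating a given simple closed geodesic $c$ by unstable laminations $\Lambda^+_{h_k}$ with $[\Lambda^+_{h_k}] \to [c]$ in $\mathcal{PML}(S)$, the corresponding leaves have endpoint pairs converging to those of $c$, giving $P_0 \subseteq \overline{\bigcup_f P_{\Lambda^+_f}}$. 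Combined with the previous paragraph, $P \supseteq P_{\Lambda^+_f}$ for every such $f$ yields $P \supseteq \overline{\bigcup_f P_{\Lambda^+_f}} = P_0$, as desired. (As a by-product, the same argument shows $P_0$ is the unique minimal such subflow.)

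The hard part is the dynamical input of the second paragraph: making precise that the $\Gamma$-recentered forward iterates of an arbitrary bi-infinite geodesic converge to genuine leaves of $\Lambda^+_f$. Naive iteration of a single endpoint pair only converges to pairs of endpoints of unstable separatrices (prong points), which need not span a single leaf; passing to the full $\Gamma$-orbit closure and recentering is exactly what avoids these prong configurations and the diagonals of the complementary ideal polygons. I expect this step to require the uniform north--south-type contraction estimates for the pseudo-Anosov action on the space of geodesics, together with the fact that $\Lambda^+_f$ is minimal and filling, so that complementary regions shrink and their $\Gamma$-translates accumulate only on leaves.
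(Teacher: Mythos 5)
Your reduction to closed $\Gamma$- and $\hat H$-invariant subsets of $\partial_\infty^{(2)}\Gamma$ matches the paper's setup, but the engine of your argument has a genuine gap, and its central claim is in fact false as stated. You assert that for \emph{any} $z\in P$, the forward orbit closure $P_f(\overline{\Gamma z})=\overline{\bigcup_{n\geq 0}\hat f^n(\overline{\Gamma z})}$ contains a leaf of the unstable lamination $\Lambda^+_f$. Take $z$ to be the endpoint pair of a lift of a leaf of the \emph{stable} lamination $\Lambda^-_f$ (such $z$ can perfectly well lie in $P$, e.g.\ when $P=\partial_\infty^{(2)}\Gamma$). Since $\Lambda^-_f$ is minimal and $f$-invariant, $\overline{\Gamma z}=P_{\Lambda^-_f}$ is already closed, $\Gamma$-invariant and $\hat f$-invariant, so $P_f(\overline{\Gamma z})=P_{\Lambda^-_f}$, which is disjoint from $P_{\Lambda^+_f}$. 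So ``pick any $z\in P$'' cannot work: you must first produce a point of $P$ transverse to $\Lambda^-_f$ (which requires a further use of the $\hat H$-invariance), and even then the recentering/contraction estimate that would force limits to be genuine leaves rather than diagonals of complementary polygons is exactly the step you defer --- you explicitly flag it as the hard part and do not prove it. The closing step also needs more care than you give it: convergence $[\Lambda^+_{h_k}]\to[c]$ in $\mathcal{PML}(S)$ does not give Hausdorff convergence of supports, only that every Hausdorff accumulation point of the supports contains $c$; that is enough here, but it is an additional argument, not a quotation.

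For comparison, the paper avoids all of this with a much more elementary mechanism: given any geodesic $\ell\subset Y$ and any simple closed geodesic $c$ meeting $\ell$, some power $T_c^k$ of the Dehn twist along $c$ lies in $H$, and the twisted geodesics $T_c^{nk}(\ell)$ spiral onto $c$, so the closed $H$-invariant set $Y$ contains $c$. Thus $Y$ contains every simple closed geodesic it meets; since any two simple closed geodesics are joined by a chain of pairwise intersecting ones, $Y$ contains all of them, hence their closure $\mathcal F_0(\pi_1(S))$. If you want to salvage your pseudo-Anosov route, the cleanest fix is probably to replace the single-point claim by the Dehn-twist observation (or to first establish that $P$ contains a pair transverse to $\Lambda^-_f$ and then prove the uniform alignment estimate); as written, the proof is incomplete.
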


\begin{proof}
    Let $\ell$ be a geodesic contained in $Y$ and let $c$ be a simple closed geodesic that intersects with $\ell$. Let $T_c$ denote the Dehn twist along $c$. Then there exists $k>0$ such that $T_c^k \in H$. Since $Y$ is closed and $H$-invariant, it contains 
    \[\bigcup_{n\in \Zb}T_c^{nk}(\ell) \supset c~.\]
    We conclude that $Y$ contains any simple closed geodesic that intersects it. In particular it contains a simple closed curve $c_0$. Let $c$ be any simple closed geodesic. There exists a simple closed geodesic $c'$ that intersects both $c$ and $c_0$. By the preceding argument, $Y$ contains $c'$, hence it contains $c$. We conclude that $Y$ contains every simple closed curve, and thus contains the Birman--Series set $\mathcal{F}_0(S)$.
\end{proof}

\subsection{Action on character varieties}

Recall that for a $\Gamma$-flow $(Y,\phi)$, $\mathrm{An}^k_Y(\Gamma, \SL(d,\Kb)) \subset \Hom(\Gamma,\SL(d,\Kb))$ denotes the collection of representations of $\Gamma$ into $\SL(d,\Kb)$ that are $k$-Anosov in restriction to $Y$.
We denote the collection of such representations modulo conjugations by $\mathcal{A}^k_Y(\Gamma, \SL(d,\Kb)) = \mathrm{An}^k_Y(\Gamma, \SL(d,\Kb))\big/ \SL(d,\Kb) \subset \chi(\Gamma, \SL(d,\Kb))$.

Now we are ready to finish the proof of Proposition \ref{p-MCGProperlyDiscontinuous}.

\begin{theorem}\label{simpleAnosovprodis}
    Let $S$ be a closed connected hyperbolic surface.  
 Then the set $\mathcal{A}^d_{\mathcal{F}_p(S)}(\pi_1(S),\SL(2d,\Cb))$ is open in $\chi(\pi_1(S),\SL(2d,\Cb))$, and $\Mod(S)$ acts properly discontinuously on it.
\end{theorem}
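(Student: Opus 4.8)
The plan is to prove the two assertions separately, both ultimately resting on the stability result (Proposition \ref{stabilityofRA}) and the general principle that proper discontinuity of the mapping class group action follows from openness together with $\Mod(S)$-invariance of the defining subflow. First I would address openness. By Proposition \ref{stabilityofRA}, the set $\mathrm{An}^d_{\mathcal F_p(S)}(\pi_1(S),\SL(2d,\Cb))$ is open in $\Hom(\pi_1(S),\SL(2d,\Cb))$, since $\mathcal F_p(\pi_1(S))$ is a cocompact $\pi_1(S)$-flow. The restricted Anosov property is manifestly invariant under conjugation of $\rho$ (conjugating $\rho$ merely changes the flat bundle $E_\rho(Y)$ by an isomorphism, carrying a dominated splitting to a dominated splitting). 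Hence the set descends to a well-defined subset $\mathcal A^d_{\mathcal F_p(S)}(\pi_1(S),\SL(2d,\Cb))$ of the character variety, and since the quotient map $\Hom \to \chi$ is open on the locus of (say, reductive or good) representations, its image is open in $\chi(\pi_1(S),\SL(2d,\Cb))$.

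The heart of the matter is proper discontinuity, and here I would exploit the $\Mod(S)$-invariance of the subflow $\mathcal F_p(S)$ established in the preceding subsection. The key observation is that precomposition by a mapping class acts on the restricted-Anosov condition in a controlled way: if $\psi \in \Mod(S) = \Out(\pi_1(S))$ is represented by an automorphism $\alpha \in \Aut(\pi_1(S))$, then $\rho \circ \alpha$ is $d$-Anosov in restriction to $\mathcal F_p(\pi_1(S))$ if and only if $\rho$ is $d$-Anosov in restriction to $\alpha_*(\mathcal F_p(\pi_1(S)))$. Since $\alpha$ extends to a self-homeomorphism of $\partial_\infty \pi_1(S)$ that permutes the axes of geodesics with at most $p$ self-intersections, one has $\alpha_*(\mathcal F_p(\pi_1(S))) = \mathcal F_p(\pi_1(S))$; that is, the subflow is genuinely invariant. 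This means the defining open condition is itself $\Mod(S)$-invariant, so the action restricts to an action on the open set $\mathcal A^d_{\mathcal F_p(S)}$.

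To upgrade invariance to proper discontinuity, I would invoke the standard mechanism (due in this setting to Labourie, see \cite{LabourieMCGAction}, and adapted to restricted Anosov representations) that attaches to each such $\rho$ a continuous, $\Mod(S)$-equivariant \emph{length function} or, more robustly, a proper $\Mod(S)$-equivariant map to a space on which $\Mod(S)$ acts properly. Concretely, for a representation $d$-Anosov in restriction to $\mathcal F_p(S)$, the top Lyapunov data along closed orbits gives, for each conjugacy class $[\gamma]$ of an element represented by a curve of at most $p$ self-intersections, a well-defined geometric quantity (e.g. $\log$ of the ratio of the $d$-th and $(d+1)$-th singular values of $\rho(\gamma)$), and these vary continuously and are permuted by $\Mod(S)$ exactly as the curves are. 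The plan is to show that any compact subset $C$ of $\mathcal A^d_{\mathcal F_p(S)}$ meets only finitely many of its $\Mod(S)$-translates, by arguing that on $C$ the relevant spectral quantities are uniformly bounded below on simple (or at-most-$p$-self-intersecting) curves, so that an unbounded sequence of mapping classes would force the bounded marked-length-type data to blow up, a contradiction.

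The main obstacle I anticipate is the final properness estimate: translating the uniform domination constants over the cocompact subflow $\mathcal F_p(\pi_1(S))$ into a uniform lower bound on a proper equivariant function, and ensuring this function is genuinely proper for the $\Mod(S)$-action (for which one typically needs that the at-most-$p$-self-intersecting curves carry enough information to detect divergence of mapping classes, via results on the action of $\Mod(S)$ on the space of such curves or measured laminations). The subtlety is that $\Mod(S)$ does not act properly on the whole character variety, so properness must be extracted purely from the Anosov-in-restriction structure; I expect this to follow by combining the uniform domination over the compact fundamental domain of $\mathcal F_p(\pi_1(S))$ with the fact, used repeatedly above, that domination forces $d$-proximality of $\rho(\gamma)$ with uniformly separated spectral gap, yielding the desired proper equivariant length function.
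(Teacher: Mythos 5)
Your openness argument and the overall strategy for proper discontinuity coincide with the paper's: both reduce properness to a marked-length-spectrum estimate on simple closed curves, uniform over compact sets of (conjugacy classes of) representations. However, as you yourself flag in your last paragraph, the two steps that actually carry the proof are left as anticipations rather than arguments, and both require specific inputs that your sketch does not supply. First, you need a \emph{uniform, linear-in-word-length} lower bound on $\log\bigl(\sigma_d(\rho(\gamma))/\sigma_{d+1}(\rho(\gamma))\bigr)$ for all $\gamma$ whose axes lie in the restricted flow and all $\rho$ in a relatively compact set of restricted-Anosov representations; this is not a formal consequence of ``uniform domination over a compact fundamental domain'' but is exactly Theorem 1.1 of \cite{W23} (quoted in the paper as Theorem \ref{prevresult}). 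Moreover, since you work in the character variety, singular values are the wrong invariant: the paper passes to eigenvalue gaps via $\lim_n \frac1n\log\bigl(\sigma_d(\rho(\gamma^n))/\sigma_{d+1}(\rho(\gamma^n))\bigr)$ and to stable word lengths $\Vert\gamma\Vert$, precisely because these are conjugation-invariant; your proposed function built from singular values of $\rho(\gamma)$ is not well defined on $\chi(\pi_1(S),\SL(2d,\Cb))$.

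Second, and more seriously, the final properness step needs the statement that the set of $[f]\in\Out(\pi_1(S))$ with $\Vert f(\gamma)\Vert\leqslant A^2\Vert\gamma\Vert$ for all $\gamma$ in a suitable \emph{finite} collection $D$ of simple closed curves is finite. The paper arranges this by choosing a generating set $\mathcal S$ whose elements and pairwise products $s_is_j^{\pm1}$ are represented by simple closed geodesics, taking $D$ to be this finite list, and invoking the proof of Lemma 12 of \cite{Lee11}. Your appeal to ``results on the action of $\Mod(S)$ on the space of such curves or measured laminations'' gestures at the right phenomenon but does not identify a usable statement, and without restricting to a finite, generating collection of simple curves it is not clear how to extract finiteness of the distorting mapping classes from bounded distortion of lengths. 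These two inputs are the substance of the proof; as written, your argument establishes openness and $\Mod(S)$-invariance but not proper discontinuity.
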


We will need the following theorem for the proof.

\begin{theorem}[\cite{W23} Theorem 1.1 and Remark 7.5]\label{prevresult}
    Let $\mathcal{S}$ be a finite, symmetric generating set of $\Gamma$ and $\lambda\geqslant 1$, $\epsilon,b\geqslant 0$ are any given constants. Let
    \[Q_{P,b}=\{ \ell:\mathbb{R}\rightarrow \Cay(\Gamma,\mathcal{S})\ |\ \ell\ \text{is a geodesic with}\ d(\ell(0),\id)\leqslant b\ \text{and}\ (\ell(-\infty),\ell(+\infty))\in P \}\] and \[\Gamma^+_{P,b}=\{ \ell(t)\ |\ \ell\in Q_{P,b}, t\in \mathbb{R}_+\ \text{with}\ \ell(t)\in \Gamma \text{ a vertex of }\Cay(\Gamma,\mathcal{S}) \}\subset \Gamma~.\]
    Let $O$ be an open, relatively compact subset in $A^k_{S_P}(\Gamma, \SL(d,\Kb))$. Then there exist constants $A\geqslant 1$ and $B\geqslant 0$, such that \[\log\dfrac{\sigma_{k}(\rho(\gamma))}{\sigma_{k+1}(\rho(\gamma))}\geqslant A^{-1}\abs{\gamma} - B~,\] for any $\rho\in O$ and $\gamma\in \Gamma^+_{P,b}$, where $\abs{\gamma}$ is the word length of $\gamma$ with respect to $\mathcal{S}$, and $\sigma_k(\rho(\gamma))$ is the $k$\textsuperscript{th} singular value of $\rho(\gamma)$.
\end{theorem}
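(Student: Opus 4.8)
The plan is to derive the singular-value gap from the dominated splitting by the same mechanism that underlies the Bochi--Potrie--Sambarino characterization of Anosov representations \cite{BPS}, the only new feature being that the splitting lives only over the subflow $S_P$. The crucial use of the hypothesis is that an element $\gamma \in \Gamma^+_{P,b}$ is witnessed by a geodesic $\ell \in Q_{P,b}$ whose endpoints form a pair in $P$; this guarantees that the entire flow orbit through $\gamma$ stays inside $S_P$, where the dominated splitting is defined, so the standard argument applies verbatim along that orbit.

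\emph{Step 1 (uniform data) and Step 2 (from the word metric to the flow).} Since $O$ is relatively compact in the open locus $A^k_{S_P}(\Gamma,\SL(d,\Kb))$ (Proposition \ref{stabilityofRA}), its closure $\overline O$ is a compact subset of that locus. Over the compact quotient $\Gamma\backslash S_P$, the dominated splitting $S_P\times\Kb^d = E^s_\rho\oplus E^u_\rho$ and an adapted metric $\Vert\cdot\Vert_\rho$ (Remark \ref{rescalingnorm}) vary continuously with $\rho$, so I may fix domination constants $C\geqslant 1,\lambda>0$ valid for all $\rho\in\overline O$ in Definition \ref{restAnosov}, arrange $E^s_\rho\perp E^u_\rho$ in $\Vert\cdot\Vert_\rho$ with a uniform lower bound on the angle between $E^s_\rho$ and $E^u_\rho$ in the standard metric, and fix a compact fundamental set on which all $\Vert\cdot\Vert_\rho$ are $M$-bi-Lipschitz to $\Vert\cdot\Vert_0$. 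Now fix $\gamma\in\Gamma^+_{P,b}$ and $\ell\in Q_{P,b}$ with $\gamma=\ell(t_0)$, $t_0\geqslant 0$. Setting $y=(\ell(-\infty),\ell(+\infty),0)\in S_P$, the orbit $\{\phi^s y\}$ lies in $S_P$ by the defining condition on $\ell$. Using the $\Gamma$-equivariant quasi-isometry $\pi\colon S_P\to\Cay(\Gamma,\mathcal{S})$ of Theorem \ref{mineyevflow} and $d(\ell(0),\id)\leqslant b$, I normalize the time origin so that $y$ lies in a fixed compact set $K$; since $\pi(\phi^{t_0}y)$ is a bounded distance from $\gamma$, the returned point $\gamma^{-1}\phi^{t_0}y$ lies in a fixed compact set as well, and $t_0\geqslant\abs{\gamma}-b'$ for a constant $b'$ depending only on $b$ and the quasi-isometry constants.

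\emph{Step 3 (singular values via the adapted metric).} By $\Gamma$-invariance of $\Vert\cdot\Vert_\rho$, the map $\rho(\gamma)$ is an isometry from $(\Kb^d,\Vert\cdot\Vert_{\rho,\gamma^{-1}\phi^{t_0}y})$ onto $(\Kb^d,\Vert\cdot\Vert_{\rho,\phi^{t_0}y})$; since $\gamma^{-1}\phi^{t_0}y$ lies in a compact set, the source metric is $M$-comparable to $\Vert\cdot\Vert_0$, so up to the factor $M$ the singular values $\sigma_i(\rho(\gamma))$ equal those of the identity map $(\Kb^d,\Vert\cdot\Vert_{\rho,\phi^{t_0}y})\to(\Kb^d,\Vert\cdot\Vert_0)$. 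The subspaces $E^s_\rho,E^u_\rho$ are constant along the orbit (the flat flow acts trivially on fibres) and orthogonal in $\Vert\cdot\Vert_{\rho,\phi^{t_0}y}$, with $\dim E^s_\rho=k$. For unit standard-norm vectors $v\in E^s_\rho$ and $w\in E^u_\rho$, the domination inequality over time $t_0$ combined with the $M$-comparability at $y\in K$ shows that the identity map stretches $v$ more than $w$ by a factor at least $C^{-1}M^{-2}e^{\lambda t_0}$. By the uniform transversality, the elementary gap lemma of \cite{BPS} then places the top $k$ singular values on $E^s_\rho$ and yields, after reinserting the factor $M$ from the reduction,
\[\log\frac{\sigma_k(\rho(\gamma))}{\sigma_{k+1}(\rho(\gamma))}\;\geqslant\;\lambda t_0-\log C-c_0\;\geqslant\;\lambda\,\abs{\gamma}-\bigl(\lambda b'+\log C+c_0\bigr)~,\]
where $c_0$ absorbs $M$ and the angle bound; this is the claim with $A=\lambda^{-1}$ and $B=\lambda b'+\log C+c_0$.

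\emph{Main obstacle.} The delicate point is the passage in Step 3 from the purely \emph{relative} domination estimate between $E^s_\rho$ and $E^u_\rho$ to an \emph{absolute} gap between the $k$-th and $(k+1)$-th singular values of $\rho(\gamma)$: one must know that the flow-invariant splitting is aligned, up to bounded distortion, with the singular-value decomposition. This is exactly what the uniform lower bound on the angle between $E^s_\rho$ and $E^u_\rho$ over the compact quotient provides, and it is the reason the gap lemma applies with constants independent of $\gamma$ and of $\rho\in\overline O$; a merely transverse but possibly near-degenerate splitting would not yield a clean singular-value gap. A secondary, routine subtlety is carrying out the identification of the flow time $t_0$ with $\abs{\gamma}$ (up to the additive error $b'$) through the quasi-isometry $\pi$ uniformly over $\overline O$.
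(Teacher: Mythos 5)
The paper itself does not prove this theorem: it is imported from \cite{W23} (Theorem 1.1 and Remark 7.5), so there is no internal proof to compare against; your reconstruction is correct and follows the same mechanism (\`a la Bochi--Potrie--Sambarino \cite{BPS}) that underlies the cited result, namely: witness $\gamma\in\Gamma^+_{P,b}$ by a flow orbit lying inside $S_P$, use relative compactness of $O$ together with cocompactness of $S_P$ to get uniform domination and metric-comparison constants, and convert the domination along that orbit into a singular-value gap via the $\Gamma$-invariance of the adapted metric, with the flow time compared to $\vert\gamma\vert$ through Mineyev's quasi-isometry. Two small corrections: the estimate $t_0\geqslant\vert\gamma\vert-b'$ should read $t_0\geqslant a\vert\gamma\vert-b'$ with a multiplicative constant $a$ coming from the quasi-isometry constants, which is harmless since it is absorbed into $A$; and the ``main obstacle'' you single out is not actually an obstacle, because by the min--max characterization of singular values one has $\sigma_k(T)\geqslant\inf_{0\neq v\in V_1}\Vert Tv\Vert/\Vert v\Vert$ for \emph{any} $k$-dimensional subspace $V_1$ and $\sigma_{k+1}(T)\leqslant\sup_{0\neq w\in V_2}\Vert Tw\Vert/\Vert w\Vert$ for \emph{any} complementary subspace $V_2$, so the uniform stretch comparison between $E^s_\rho$ and $E^u_\rho$ already yields the gap with no lower bound on their angle required.
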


\begin{proof}[Proof of Theorem \ref{simpleAnosovprodis}]
    Let $\mathcal{S} = \{s_1,s_2,...,s_{n}\}$ be a generating set of $\pi_1(S)$ such that
    \begin{itemize}
        \item Each $s_i$ represents a simple closed geodesic;
        \item For any $i\ne j$, at least one of $s_is_j$ and $s_is_j^{-1}$ represents a simple closed geodesic.
    \end{itemize}
    Let $D$ denote the collection of all $s_i$ and all $s_is_j^{\pm 1}$ that represent simple closed geodesics.
    
    Let $P\subset \partial_\infty^{(2)}\pi_1(S)$ be the closed subset such that $S_P = \mathcal{F}_p(S)$. Since $A^d_{\mathcal{F}_p(S)}(\Gamma, \SL(2d,\Cb))$ is open in $\Hom(\Gamma, \SL(2d,\Cb))$ (by Proposition \ref{stabilityofRA}) and conjugation invariant, we have $\mathcal{A}^d_{\mathcal{F}_p(S)}(\Gamma, \SL(2d,\Cb))$ is open in $\chi(\Gamma, \SL(2d,\Cb))$.

    Let $\ell$ be a compact subset of $\mathcal{A}^d_{\mathcal{F}_p(S)}(\pi_1(S),\SL(2d,\Cb))$. We pick $b\geqslant 0$ large enough such that all powers of elements in $D$ contains in $ \Gamma^+_{P,b}$. By Theorem \ref{prevresult}, for any $\rho_0\in A^d_{\mathcal{F}_p(S)}(\Gamma, \SL(2d,\Cb))$ with $[\rho_0]\in L$, there exists a relatively compact neighborhood $O$ and constants $A_O\geqslant 1$ and $B_O\geqslant 0$, such that \[ A_O^{-1}\abs{\gamma}- B_O\leqslant\log\dfrac{\sigma_{d}(\rho(\gamma))}{\sigma_{d+1}(\rho(\gamma))}\leqslant \log\dfrac{\sigma_{1}(\rho(\gamma))}{\sigma_{2d}(\rho(\gamma))}\leqslant A_O\abs{\gamma}~,\]
    for any $\rho\in O$ and $\gamma\in \Gamma^+_{P,b}$. Here the third inequality follows from that $\pi_1(S)$ is finitely generated. Then for any $\gamma\in D$, we have \[A_O^{-1}\Vert\gamma\Vert\leqslant\log\dfrac{|\lambda_{d}(\rho(\gamma))|}{|\lambda_{d+1}(\rho(\gamma))|} = \lim\limits_{n\to \infty}\dfrac{1}{n} \log\dfrac{\sigma_{d}(\rho(\gamma^n))}{\sigma_{d+1}(\rho(\gamma^n))} \leqslant A_O\Vert\gamma\Vert~,\] where $\Vert\gamma\Vert = \lim\limits_{n\to \infty} \dfrac{\abs{\gamma}^n}{n}$ is the stable length of $\gamma$ in $\Cay(\pi_1(S),\mathcal{S})$ and $\lambda_k$ denote the $k$\textsuperscript{th} eigenvalue (ranking by absolute values). Since both stable lengths in $\Cay(\pi_1(S),\mathcal{S})$ and eigenvalues are invariant by conjugation, the inequality holds for all $\rho \in \SL(d,\Cb)\cdot O$. Let $[O]$ denote the conjugation classes of $\SL(d,\Cb)\cdot O$, then $\ell$ is covered by finitely many such open set $[O]$. Therefore, there exists a constant $A$, such that \[A^{-1}\Vert\gamma\Vert\leqslant\log\dfrac{|\lambda_{d}(\rho(\gamma))|}{|\lambda_{d+1}(\rho(\gamma))|} \leqslant A\Vert\gamma\Vert~,\] for all $\gamma\in D$.

    Suppose there exists $[f]\in\Out(\pi_1(S))$ with a representative $f\in\Aut(\pi_1(S))$, such that there is a representation $[\rho]\in L$ with $[\rho\circ f]\in L$, where $\rho$ is a representative of $[\rho]$. Then we have \[\Vert f(\gamma)\Vert \leqslant A^2 \Vert\gamma\Vert~,\] for any $\gamma\in D$.

    Therefore, to show the $\Mod(S)$-action is properly discontinuous, it suffices to check \[\{[f]\in \Out(\pi_1(S))\ |\ \Vert f(\gamma)\Vert \leqslant A^2 \Vert\gamma\Vert\ \text{for all}\ \gamma\in D \}\] is finite. This immediately follows from the proof of Lemma 12 in \cite{Lee11}.
\end{proof}

\begin{remark}
    It provides a sequence of domains of discontinuous \[\mathcal{A}^d_{\mathcal{F}_{p+1}(S)}(\pi_1(S),\SL(2d,\Cb))\subset \mathcal{A}^d_{\mathcal{F}_{p}(S)}(\pi_1(S),\SL(2d,\Cb))~,\] all of which are contained in $\mathcal{A}^d_{\mathcal{F}_0(S)}(\pi_1(S),\SL(2d,\Cb))$.
\end{remark}

\bibliographystyle{alpha}
\bibliography{reference.bib}

\end{document}